\theoremstyle{thmstyleone}
\newtheorem{theorem}{Theorem}
\newtheorem{lemma}[theorem]{Lemma}
\newtheorem{proposition}[theorem]{Proposition}
\newtheorem{corollary}[theorem]{Corollary}
\theoremstyle{thmstyletwo}%
\newtheorem{example}[theorem]{Example}%
\theoremstyle{thmstylethree}%
\newtheorem{definition}{Definition}%
\newenvironment{proof}{    % De proof o de prueba seg\'{u}n convenga
	\noindent
	\textbf{Proof.}}{
	\hfill $\Box$
	\vspace{3mm}
}
\numberwithin{equation}{section}
\newcommand{\N}{\mathbb{N}} %% Conjunto naturales:     \N
\newcommand{\C}{\mathbb{C}} %% Conjunto complejos:     \C
\newcommand{\D}{\mathbb{D}} %% Disco unidad:           \D
\newcommand{\T}{\mathbb{T}}%% Toro
\newcommand{\eps}{\varepsilon}
\newcommand{\vgd}{v_{\gamma+\delta}}
\newcommand{\vg}{v_{\gamma}}
\title{Generalized Hilbert operators acting on weighted spaces of holomorphic functions with sup-norms}
\author{María J. Beltrán-Meneu\footnote{Departament de Matemàtiques, Universitat Jaume I, Av.Vicent Sos Baynat, s/n, Castell\' o de la Plana, E-12071 (Spain), mmeneu@uji.es,   https://orcid.org/0000-0001-9645-096X.}, 
	José Bonet\footnote{Instituto Universitario de Matem\'atica Pura y Aplicada IUMPA, Universitat Politècnica de València, Camino de Vera, s/n, Valencia,  E-46022 (Spain), jbonet@mat.upv.es, https://orcid.org/0000-0002-9096-6380.} 
	and Enrique Jordá\footnote{Instituto Universitario de Matem\'atica Pura y Aplicada IUMPA, Universitat Politècnica de València, Camino de Vera, s/n, Valencia,  E-46022 (Spain), ejorda@mat.upv.es, https://orcid.org/0000-0003-2980-1699.}}
\date{}
\begin{document}
	
	\maketitle
	
	\begin{abstract}
		
The behaviour of the generalized Hilbert operator associated with a positive finite Borel measure $\mu$ on $[0,1)$ is investigated when it acts on weighted Banach spaces of holomorphic functions on the unit disc defined by sup-norms and on Korenblum type growth Banach spaces. It is studied when the operator is well defined, bounded and compact. To this aim, we study when it can be represented as an integral operator. We observe  important differences with the  behaviour of the  Cesàro-type operator acting on these spaces, getting that boundedness and compactness are equivalent concepts for some standard weights. For the space of bounded holomorphic functions on the disc   and for the Wiener algebra, we get also this equivalence, which is characterized in turn by the summability of the moments of the measure $\mu.$ In the latter case, it is also equivalent to nuclearity. 	Nuclearity of the generalized Hilbert operator acting on related spaces, such as  the classical Hardy space, is also analyzed.

\end{abstract}
	
		\noindent
		\textbf{Keywords:} Generalized Hilbert operator, Weighted Banach spaces of analytic functions,  Boundness, Compactness, Nuclearity.\\
		
		\noindent
		\textbf{2020 Mathematics Subject Classification:} Primary 47B38; Secondary 46E10, 46E15, 47A10, 47A16, 47A35, 47B35.

	\section{Introduction}
	
	Let $H(\D)$ be the space of holomorphic functions on the unit disc $\D$	of the complex plane $\C$ endowed with the topology $\tau_0$ of uniform convergence on the compact subsets of $\D$. If  $\mu$ is a positive finite Borel measure on $[0,1)$, we denote by $\mu_n:=\int_{0}^{1}t^nd\mu(t)$ the {\it $n$-th moment of $\mu$}, $n\in \N_0$. Here $\N_0=\N \cup \{0\} = \{0,1,2,...\}$. For
$f\in H(\D),$ $f(z)=\sum_{n=0}^{\infty}a_nz^n,$ $z\in \D,$ we consider its image  by the generalized Hilbert operator
	$$H_{\mu}(f)(z):=\sum_{n=0}^{\infty}\left(\sum_{k=0}^{\infty}\mu_{n+k}a_k\right)z^n, \ z\in \D,$$
	in case it defines an analytic function in $\D$.  	The operator $H_{\mu}$ is a generalization of the classical Hilbert operator $H$. Indeed, if $\mu$ is the Lebesgue measure, then 	$H_\mu=H$ and $\mu_n=\frac{1}{n+1}, n\in \N_0.$
	
	The following related operator will also be considered:
	$$ I_\mu(f)(z):=\int_{0}^{1}\frac{f(t)}{1-tz}d\mu(t),$$
	whenever it defines an analytic function in $\D$ for $f \in H(\D)$.  	  It is well known that, if $\int_{0}^{1}\frac{d\mu(t)}{1-t}<\infty,$ i.e., $\sum_{n=0}^{\infty}\mu_n<\infty,$ then $I_{\mu}$ is well defined and continuous on $H(\D).$
	
	These two operators coincide on the polynomials. Therefore, if  they are continuous on a  space containing the polynomials as a  dense subset, then they must coincide. %Further results are mentioned in the paper.

The Hilbert operator and the generalized Hilbert operators have received much attention in the last decades and their behaviour have been investigated when acting on different spaces of holomorphic functions on the disc. We refer the reader to \cite{AMS, Dai, DJV, LMN} and the references therein for the Hilbert matrix and to \cite{CGP, GGPS, GP, GirelaMerchan, GM2, Jevtic2019, PR, Ye} for the generalized Hilbert and other Hankel operators.  

The purpose of our article is to investigate the boundedness and compactness of the generalized Hilbert operator $H_{\mu}$ and its companion operator $I_{\mu}$  when they act on weighted spaces of holomorphic functions on $\D$ defined by sup-norms, and in particular on the Korenblum type growth Banach spaces $H_{\vg}^{\infty}$ and  $H_{\vg}^0$, $\gamma>0$. When $p<1$, the results that we present for the continuity and compactness of $I_\mu:H_{\vg}^{\infty}\to H_{v_p}^{\infty}$ are analogous to those obtained for the Ces\`aro-type operator in \cite{BBJCesaro}. When $p\geq 1$ the situation is completely different. In particular, we show that  the continuity of $I_\mu:H_{v_1}^{\infty}\to H_{v_1}^{\infty}$ is equivalent to the compactness of the   operator. Both conditions are equivalent to the  above mentioned summability of the sequence  of moments $(\mu_n)_n$ associated to $\mu$. In the last section we show that the summability condition is in fact equivalent to the continuity and also to the compactness of $I_\mu$ when it acts on the space of bounded holomorphic functions on the disc $H^\infty$ and in the Wiener algebra $A(\T)$. We get $I_\mu=H_\mu$ in these cases. Moreover, we show that the continuity of $I_\mu:A(\T)\rightarrow A(\T)$ is equivalent not only to the compactness but also to  nuclearity. The equivalence of the summability of  $(\mu_n)_n$ and the continuity of $I_\mu=H_\mu\in \mathcal{L}(H^\infty)$ was obtained recently  by Girela and Merch\'an  in \cite{GirelaMerchan}.

We present some necessary definitions. A {\it weight} $v$ on $\D$  is  a radial ($v(z)=v(|z|),\ z\in \D$), continuous and  non-increasing function with respect to $|z|$ satisfying $\lim_{r\rightarrow 1^-}v(r)=0$. Given a weight $v$, we define the {\it weighted Banach spaces of holomorphic funcions }

$$H_v^{\infty}:=\{f\in H(\D): \ \sup_{z\in \D}v(z)|f(z)|<\infty\}$$
and

$$H_v^{0}:=\{f\in H_v^{\infty}: \ \lim_{|z|\rightarrow 1^-} v(z)|f(z)|=0\}.$$

Both are Banach spaces under the norm $\|f\|_v:=\sup_{z\in \D}v(z)|f(z)|, \ f\in H_v^{\infty}.$ They appear in the study of growth conditions of analytic functions and have been investigated by several authors since the work of Shields and Williams (see eg.  \cite{BBG}, \cite{BBT}, \cite{Lusky2006}, \cite{SW1971} and the references therein). The space $H_v^{0}$ coincides with the closure of the polynomials in $H_v^{\infty}$ and $H_v^{\infty}$ is canonically isomorphic to  the bidual $(H_v^{0})^{**}$ \cite{BS}.

The constant weight $v\equiv 1$ does not satisfy our assumptions. In this case, $H_v^{\infty}$ is the classical Banach space $H^\infty$ of bounded holomorphic functions on $\D$, and $H_v^{0}$ reduces to $\{ 0 \}$. We will state and prove some results for $H^\infty$, but when we refer to a weight in the statements, the constant weight is excluded, unless it is explicitly mentioned.  The so-called \textit{standard  weights} are defined by  $v_{\gamma}(r)=(1-r)^{\gamma}, $ $r>0,$ $\gamma>0,$ the spaces $H_{v_{\gamma}}^{\infty}$ and $H_{v_{\gamma}}^0$ are the classical \textit{Korenblum type growth Banach spaces} $A^{-\gamma}$ and $A^{-\gamma}_0,$ respectively. These spaces play an important role in interpolation and   sampling of holomorphic functions \cite[Chapters 4 and 5]{Korenblum}. Shields and Williams \cite{SW1971} proved that  $H_{v_{\gamma}}^{\infty}$ is isomorphic to $\ell^{\infty}$ and that  $H_{v_{\gamma}}^0$ is isomorphic to $c_0$; a result that was extended by Lusky \cite[Theorem 1.1]{Lusky2006}. A survey about weighted Banach spaces of analytic functions of the type considered in this paper and operators between them can be seen in  \cite{pepesurvey}.

A weight $v$ is called \textit{essential} if there is a constant $c>0$ such that, for each $z \in \D$,
$$c/v(z) \leq \sup\{ |f(z)| \ ; \|f\|_v \leq 1 \} \leq 1/v(z).$$
This terminology was introduced in \cite{BBT}, where this type of weights were investigated. Every standard weight $v_{\gamma}$ is essential. The following consequence of a deep result due to Abakumov and Doubtsov \cite{AD} (see also \cite[Theorem 2]{pepesurvey}) is useful later in the article.

\begin{lemma}\label{essential}
If the weight $v$ is essential, then there are $f_v \in H_v^\infty$ and $D(v)>0$ such that $1/v(z) \leq D(v) f_v(|z|)$ for each $z \in \D$. In particular $f_v(t) >0$ for each $t \in [0,1)$.
\end{lemma}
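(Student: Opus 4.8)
The plan is to deduce the statement from the reverse growth estimate of Abakumov and Doubtsov recalled in \cite[Theorem 2]{pepesurvey}. Applied to the essential weight $v$, that result produces a single function $g\in H_v^\infty$ and a constant $c=c(v)>0$ such that $\sup_{|w|=r}|g(w)|\ge c/v(r)$ for every $r\in[0,1)$; moreover, the extremal function furnished by the construction (a lacunary-type series) can be taken with nonnegative Taylor coefficients. The first step is to make this last point explicit: writing $g(z)=\sum_{n\ge 0}a_nz^n$ with $a_n\ge 0$, one has, for each $r\in[0,1)$, $\sup_{|w|=r}|g(w)|=\sum_{n\ge 0}a_nr^n=g(r)$, the supremum over the circle $|w|=r$ being attained at the positive real point $w=r$. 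Hence the reverse estimate becomes $g(r)\ge c/v(r)$ for all $r\in[0,1)$.

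The remaining steps are immediate. Set $f_v:=g\in H_v^\infty$ and $D(v):=1/c$. Since $v$ is radial, $v(z)=v(|z|)$ for every $z\in\D$, and therefore
$$\frac{1}{v(z)}=\frac{1}{v(|z|)}\le \frac{1}{c}\,g(|z|)=D(v)\,f_v(|z|),$$
which is the asserted inequality. Finally, $v$ is a strictly positive weight on $[0,1)$, so $1/v(t)>0$ for each $t\in[0,1)$; combined with $0<1/v(t)\le D(v)f_v(t)$ this forces $f_v(t)>0$ on $[0,1)$, which is the last assertion.

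The only delicate point is the input borrowed from \cite{AD} and \cite[Theorem 2]{pepesurvey}: essentiality of $v$ only says that for each individual $z\in\D$ there is a norm-one function in $H_v^\infty$ of size comparable to $1/v(z)$ at that point, and these functions cannot be glued together naively, which is precisely why the Abakumov--Doubtsov result is needed to obtain one function $g$ controlling $1/v$ along all circles. Pinning down that $g$ may be chosen with nonnegative Taylor coefficients (so that the control of $1/v$ occurs on the positive real axis rather than merely through circle maxima) is the main thing to verify carefully; if one starts instead from a version of the theorem phrased for an arbitrary extremal $g$, one replaces $g$ by the series $\sum_n|a_n|z^n$, noting that this stays in $H_v^\infty$ for the (essential) weights at hand and only enlarges the modulus on $[0,1)$, after which the argument above applies verbatim. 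Everything else is a trivial rescaling.
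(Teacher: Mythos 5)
Your main argument is correct and is essentially the paper's own proof: both rest on the Abakumov--Doubtsov characterization (\cite[Theorem 4]{AD}, recalled in \cite[Theorem 2]{pepesurvey}) that $v$ is essential precisely when $c_1/v(r)\leq\sum_{k}a_kr^k\leq c_2/v(r)$ for some non-negative coefficients $(a_k)$, after which the lower comparison evaluated at $r=|z|$ together with radiality of $v$ gives the lemma with $f_v(z)=\sum_k a_kz^k$ and $D(v)=1/c_1$, the upper comparison being what guarantees $f_v\in H_v^\infty$. One warning about your closing remark: the fallback of replacing an arbitrary extremal $g(z)=\sum_n b_nz^n\in H_v^\infty$ by $\sum_n|b_n|z^n$ and asserting that this ``stays in $H_v^\infty$ for the essential weights at hand'' is false in general, because $H_{v_\gamma}^\infty$ is not solid: by the solid hull description in Proposition \ref{Solid}, the sequence $((m+1)^{\gamma-1/2})_m$ lies in $S(H_{v_\gamma}^\infty)$, so some $g\in H_{v_\gamma}^\infty$ has $|b_m|\geq (m+1)^{\gamma-1/2}$, while $\sum_m (m+1)^{\gamma-1/2}r^m\cong(1-r)^{-\gamma-1/2}$, so $\sum_n|b_n|z^n\notin H_{v_\gamma}^\infty$. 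This does not damage your proof, since the non-negativity of the coefficients is already part of the cited Abakumov--Doubtsov statement, exactly as it is used in the paper; just do not rely on that coefficient-modulus patch as an alternative.
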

\begin{proof}
By \cite[Theorem 4]{AD} the weight $v$ is essential if and only if there is a sequence $(a_k)_{k=0}^{\infty}$ of non-negative numbers and there are constants $c_1, c_2 >0$ such that, for each $0 \leq r < 1$,
$$
c_1 \frac{1}{v(r)} \leq \sum_{k=0}^{\infty} a_k r^k \leq c_2 \frac{1}{v(r)}.
$$
Then the function $f_v(z):= \sum_{k=0}^{\infty} a_k z^k, \ z \in \D,$ belongs to $H(\D)$, and, for each $z \in \D$,
$$
v(z) |f_v(z)| \leq v(z) \sum_{k=0}^{\infty} a_k |z|^k \leq c_2.
$$
This implies $f_v \in H^\infty_v$. Moreover, $\frac{1}{v(r)} \leq (1/c_1) f_v(r)$ for each $0 \leq r < 1$, and the conclusion follows with $D(v)=1/c_1$.
\end{proof}

Given two functions  $f$  and $g$ defined in an (possibly unbounded) interval $I$ of real numbers, we write $f(x) \lesssim g(x)$ to denote the existence of $C>0$ such that $f(x) \leq C g(x)$ for each $x \in I$. In case $f(x) \lesssim g(x)  \lesssim f(x),$ we write $f(x)\cong g(x).$  If $a$ is an extreme point of the interval $I$ (possible infinity), we shall use Landau's notation $f(x)=O(g(x))$  as $x\to a$ and $f(x)=o(g(x))$ as $x\to a$.

We recall that a positive finite  Borel measure $\mu$ on $[0,1)$ is an {\it $s$-Carleson measure}, $s>0,$  if
$$ \mu([t,1))=O((1-t)^s), \text{ as $t\rightarrow 1^-$},$$
and a {\it vanishing $s$-Carleson measure} if
$$\mu([t,1))=o((1-t)^s) \text{ as $t\rightarrow 1^-$}.$$
A  $1$-Carleson measure and a vanishing $1$-Carleson measure will be simply called a Carleson measure and a vanishing Carleson measure, respectively. By \cite[Lemma 2]{GGM}, $\mu$ is a Carleson measure if and only if $\mu_n=O(\frac{1}{n})$ as $n\to \infty,$ and $\mu$ is a vanishing Carleson measure if and only if $\mu_n=o(\frac{1}{n})$ as $n\to \infty.$

Our notation for concepts from functional analysis and operator theory is standard. We refer the reader e.g.\ to  \cite{Conway} and \cite{meisevogt}. 	 Given a Banach space $X,$ we denote by ${X}^*$ its topological dual, that is, the space of  all continuous linear forms on $X,$ and  given a continuous operator $T$ on $X,$ we denote by ${T}^*$ its \emph{transpose}.

	\section{$I_{\mu}$ as an operator with values in $H(\D)$}

In this section $(X,\|\cdot\|)$ is a general Banach space of analytic functions, that is, a Banach space containing the polynomials such that the inclusion  $X\hookrightarrow (H(\D),\tau_0)$ is continuous. We analyze, for a general measure $\mu$,  when the operator $I_\mu:X\to H(\D)$ is well defined and continuous. We also  study the relation with the operator $H_\mu:X\to H(\D)$. 	
	
First, observe that, given $f(z)=\sum_{n=0}^{\infty}a_nz^n\in H(\D),$ we get $I_\mu(f)(0)=\int_{0}^{1}f(t)d\mu(t)$. So, if $I_{\mu}$ is well defined, then the restriction of $f$ to $[0,1)$ must belong to $L([0,1),\mu).$ In the next proposition we prove that this condition is also sufficient to get $I_{\mu}(f)\in H(\D)$.

\begin{proposition}\label{welldefinedL1}
	Let $\mu$ be a positive finite Borel measure on $[0,1)$. If $f\in H(\D)$ satisfies that its restriction to $[0,1)$ belongs to  $L_1([0,1),\mu)$, then $I_\mu(f)\in H(\D)$.
\end{proposition}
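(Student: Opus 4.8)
The plan is to expand the Cauchy-type kernel into a geometric series and interchange summation and integration, the interchange being legitimate because the kernel stays bounded away from its singularity on compact subsets of $\D$; as a bonus this exhibits the Taylor coefficients of $I_\mu(f)$.

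First I would record the basic pointwise bound. Fix $0<r<1$. For every $t\in[0,1)$ and every $z$ with $|z|\le r$ one has $|1-tz|\ge 1-t|z|\ge 1-r>0$, so the geometric expansion $\frac{1}{1-tz}=\sum_{n=0}^{\infty}t^nz^n$ is valid and
$$\int_{0}^{1}\frac{|f(t)|}{|1-tz|}\,d\mu(t)\le \frac{1}{1-r}\int_{0}^{1}|f(t)|\,d\mu(t)<\infty$$
by hypothesis. In particular $I_\mu(f)(z)$ is a well-defined complex number for every $z\in\D$.

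Next I would justify, for each fixed $z$ with $|z|\le r$, the identity
$$I_\mu(f)(z)=\int_{0}^{1}f(t)\sum_{n=0}^{\infty}t^nz^n\,d\mu(t)=\sum_{n=0}^{\infty}\left(\int_{0}^{1}f(t)t^n\,d\mu(t)\right)z^n$$
by Fubini's theorem for the product of $\mu$ with the counting measure on $\N_0$; its hypothesis holds because, using $t^n\le 1$ and $|z|^n\le r^n$,
$$\sum_{n=0}^{\infty}\int_{0}^{1}|f(t)|\,t^n|z|^n\,d\mu(t)\le\left(\int_{0}^{1}|f(t)|\,d\mu(t)\right)\sum_{n=0}^{\infty}r^n=\frac{\int_{0}^{1}|f(t)|\,d\mu(t)}{1-r}<\infty.$$
Putting $b_n:=\int_{0}^{1}f(t)t^n\,d\mu(t)$, we get $|b_n|\le\int_{0}^{1}|f(t)|\,d\mu(t)$ for all $n$, hence $\limsup_n|b_n|^{1/n}\le 1$, so the power series $\sum_n b_nz^n$ has radius of convergence at least $1$. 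Since $r\in(0,1)$ was arbitrary, $I_\mu(f)(z)=\sum_n b_nz^n$ on all of $\D$, and therefore $I_\mu(f)\in H(\D)$.

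The only mildly delicate point is the application of Fubini's theorem; everything else follows routinely from the uniform lower bound $|1-tz|\ge 1-r$ for $|z|\le r$, $t\in[0,1)$. As an alternative one could prove holomorphy via Morera's theorem—integrate $I_\mu(f)$ over the boundary of an arbitrary triangle contained in $\D$, interchange the order of integration using the same $L_1$-estimate, and note that the inner integral $\oint \frac{dz}{1-tz}$ vanishes by Cauchy's theorem—but I would favour the power-series argument because it simultaneously identifies the coefficients of $I_\mu(f)$, which will be convenient later.
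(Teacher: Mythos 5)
Your argument is correct, but it proves the proposition by a genuinely different route than the paper. The paper truncates the integral, setting $I^r_\mu(f)(z)=\int_0^r\frac{f(t)}{1-tz}\,d\mu(t)$ (holomorphic, since one integrates over a compact subset of $[0,1)$ where the kernel is uniformly bounded), and then shows $\sup_{|z|\le R}|I_\mu(f)(z)-I^r_\mu(f)(z)|\le\frac{1}{1-R}\int_r^1|f(t)|\,d\mu(t)\to 0$ as $r\to 1^-$, so that $I_\mu(f)$ is a $\tau_0$-limit of holomorphic functions and hence holomorphic by Weierstrass's theorem. You instead expand the kernel as a geometric series, justify the interchange of sum and integral by Tonelli--Fubini for $\mu$ times counting measure (your estimate $\sum_n r^n\int_0^1|f|\,d\mu<\infty$ is exactly what is needed), and read off the Taylor coefficients $b_n=\int_0^1 f(t)t^n\,d\mu(t)$, which are bounded by $\|f\|_{L_1(\mu)}$ and so give radius of convergence at least $1$. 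Both proofs are complete; the trade-off is that the paper's truncation estimate is recycled almost verbatim as the continuity estimate in Lemma \ref{techlem} and in display (\ref{eq_provabola}) of Proposition \ref{BenDefinitHdGeneralX}, whereas your computation already exhibits the coefficient formula that the paper only derives later (for the dilates $f_r$) when relating $I_\mu$ to $H_\mu$ via Abel summation, so it would let one shortcut part of that later discussion.
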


\begin{proof}
	Fix $f\in H(\D)\cap L_1([0,1),\mu)$. For $0\leq r<1,$ put $I^r_{\mu}(f)(z):=\int_{0}^{r}\frac{f(t)}{1-tz}d\mu(t)\in H(\D).$ Given $0<R<1,$ we get
	$$\sup_{z\in \D, |z|\leq R}|I_{\mu}(f)(z)-I_{\mu}^{r}(f)(z)|\leq  \frac{1}{1-R}\int_{r}^{1}|f(t)|d\mu(t).$$
	Hence, as $\int_{0}^{1}|f(t)|d\mu(t)<\infty$, we get $I_{\mu}^{r}(f)\rightarrow I_{\mu}(f)$  as $r\rightarrow 1$ in $\tau_0$. Hence,  $I_{\mu}(f)\in H(\D)$.
\end{proof}

If we look now at the Hilbert-type operator, we see that  $H_{\mu}(f)(0)=\sum_{n=0}^{\infty}\mu_na_n$. So, if $H_{\mu}$ is   well defined for  $f(z)=\sum_{n=0}^{\infty}a_nz^n\in H(\D)$, then  $\sum_{n=0}^{\infty}\mu_na_n$ must be convergent.  In the next proposition we prove that if we consider the stronger condition $\sum_{n=0}^{\infty}\mu_n|a_n|<\infty$, we even get that  $H_\mu(f)$ and  $I_\mu(f)$ are well defined and coincide.

\begin{proposition}\label{HI}
	Let $X\hookrightarrow (H(\D),\tau_0)$  be a Banach space of analytic functions and let $\mu$ be a positive finite Borel measure on $[0,1)$. If $\sum_{n=0}^{\infty}\mu_n|a_n|<\infty$ for each sequence $(a_n)_n$ such that  $f(z)=\sum_{n=0}^{\infty}a_nz^n\in X$, $z\in\D$, then $H_\mu:X\to H(\D)$ and $I_\mu:X\to H(\D)$ are well defined and $I_\mu=H_\mu$.
\end{proposition}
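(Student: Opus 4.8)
The plan is to show, under the hypothesis $\sum_{n=0}^{\infty}\mu_n|a_n|<\infty$ for every $f(z)=\sum_n a_n z^n\in X$, that both $H_\mu(f)$ and $I_\mu(f)$ are well-defined analytic functions on $\D$ and that they agree. First I would deal with $H_\mu$: fix $f(z)=\sum_{n=0}^{\infty}a_nz^n\in X$ and observe that, since $\mu$ is finite, $\mu_{n+k}\leq\mu_k$ (the moments are non-increasing in the index, as $t\in[0,1)$), hence for each fixed $n$ the inner sum satisfies $\sum_{k=0}^{\infty}|\mu_{n+k}a_k|\leq\sum_{k=0}^{\infty}\mu_k|a_k|<\infty$, so the coefficient $b_n:=\sum_{k=0}^{\infty}\mu_{n+k}a_k$ is a well-defined complex number. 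To see that $\sum_n b_n z^n$ converges on $\D$, note $|b_n|\leq\mu_n\sum_{k=0}^{\infty}|a_k|=:C\mu_n$, and since $(\mu_n)_n$ is bounded (by $\mu([0,1))$), the series $\sum_n b_n z^n$ converges absolutely on $\D$; thus $H_\mu(f)\in H(\D)$.

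Next I would handle $I_\mu$. By Proposition~\ref{welldefinedL1} it suffices to check that $f|_{[0,1)}\in L_1([0,1),\mu)$. For $t\in[0,1)$ we have $|f(t)|\leq\sum_{k=0}^{\infty}|a_k|t^k$, so by the monotone convergence theorem
$$\int_0^1|f(t)|\,d\mu(t)\leq\int_0^1\sum_{k=0}^{\infty}|a_k|t^k\,d\mu(t)=\sum_{k=0}^{\infty}|a_k|\int_0^1 t^k\,d\mu(t)=\sum_{k=0}^{\infty}\mu_k|a_k|<\infty,$$
so $f|_{[0,1)}\in L_1([0,1),\mu)$ and $I_\mu(f)\in H(\D)$.

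Finally, to prove $I_\mu(f)=H_\mu(f)$, I would expand the kernel: for $z\in\D$ and $t\in[0,1)$, $\frac{1}{1-tz}=\sum_{n=0}^{\infty}t^n z^n$, hence $\frac{f(t)}{1-tz}=\sum_{n=0}^{\infty}\big(\sum_{k=0}^{\infty}a_k t^{n+k}\big)z^n$. To integrate term by term against $\mu$ one needs a Fubini/Tonelli justification: the total mass is controlled by $\sum_{n,k}|a_k|\int_0^1 t^{n+k}\,d\mu(t)=\sum_{n,k}|a_k|\mu_{n+k}\leq\sum_n\big(\sum_k\mu_k|a_k|\big)\cdot$(something unbounded in $n$) --- this naive bound diverges, so I would instead fix $|z|\leq R<1$ and estimate $\sum_{n,k}|a_k|\mu_{n+k}|z|^n\leq\sum_{n}R^n\sum_k\mu_k|a_k|<\infty$, which is finite and permits interchanging the sum over $n$ with the integral over $[0,1)$ (and the inner sum over $k$ with the integral, by the same bound). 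This yields $I_\mu(f)(z)=\sum_{n=0}^{\infty}\big(\int_0^1\sum_k a_k t^{n+k}\,d\mu(t)\big)z^n=\sum_{n=0}^{\infty}\big(\sum_k\mu_{n+k}a_k\big)z^n=H_\mu(f)(z)$ for all $z\in\D$. The main obstacle is precisely this interchange of summation and integration: the crude global bound is not summable, so one must localize to compact discs $|z|\leq R$ and use that $R<1$ to get a geometric factor that tames the sum over $n$, after which Fubini applies on each such disc and the identity propagates to all of $\D$ by the identity theorem or simply because $R$ is arbitrary.
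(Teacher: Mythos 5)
Your argument follows essentially the same route as the paper: boundedness of the coefficients $b_n=\sum_k\mu_{n+k}a_k$ for $H_\mu$, monotone convergence plus Proposition~\ref{welldefinedL1} for $I_\mu$, and an interchange of sum and integral for the identity $I_\mu=H_\mu$ (the paper invokes dominated convergence where you invoke Fubini--Tonelli on $|z|\leq R$; both are fine). One step as written is wrong, though: the estimate $|b_n|\leq\mu_n\sum_{k=0}^{\infty}|a_k|$ is useless in general, because the hypothesis is $\sum_k\mu_k|a_k|<\infty$, not $\sum_k|a_k|<\infty$; a function such as $f(z)=\frac{1}{1-z}$ (which lies in spaces like $H_{v_1}^\infty$ to which the proposition is meant to apply) has non-summable Taylor coefficients, so your constant $C$ may be $+\infty$. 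The repair is already in your previous sentence: the bound $|b_n|\leq\sum_{k=0}^{\infty}\mu_k|a_k|$ is uniform in $n$, so $(b_n)_n$ is bounded and $\sum_n b_nz^n$ has radius of convergence at least $1$ --- exactly the paper's argument. With that substitution the proof is correct.
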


\begin{proof}
On the one hand, as  $(\mu_n)_{n\in \N_0}$ is decreasing, by hypothesis   $(\sum_{k=0}^{\infty}\mu_{n+k}a_k)_{n\in \N_0}$ is bounded, thus $H_{\mu}(f)(z)=\sum_{n=0}^{\infty}\left(\sum_{k=0}^{\infty}\mu_{n+k}a_k\right)z^n\in H(\D)$ for every $f\in X$, $f(z)=\sum_{n=0}^{\infty}a_nz^n$, $z\in\D$. On the other hand, the hypothesis yields that the restriction of  every $f\in X$ to $[0,1)$ belongs to $L_1([0,1),\mu)$. Actually, the monotone convergence theorem gives
	$\int_0^1 |f(t)| d\mu(t)\leq \sum_{n=0}^{\infty}\mu_n|a_n|,$ hence $I_\mu:X\to H(\D)$ is well defined by Proposition \ref{welldefinedL1}.
The coincidence of  $I_{\mu}$ and $H_{\mu}$ on $X$ follows by the dominated convergence theorem using the Taylor series of  $\displaystyle\frac{1}{1-tz}$.
	\end{proof}

We need Abel summability of a series to write $I_{\mu}$ in terms of the Hilbert-type operator below in Proposition \ref{BenDefinitHdGeneralX}.

\begin{definition}
	A series $\sum_{n=0}^{\infty}a_n$, $a_n\in \C,$   is said to be Abel summable to   $S\in \C$  if $\sum_{n=0}^{\infty}a_nr^n$ is convergent for every $0<r<1$ and    $\lim_{r\rightarrow 1^-}\sum_{n=0}^{\infty}a_nr^n=S$.  In this case we write  $S=(A)$-$\sum_{n=0}^{\infty}a_n$.
\end{definition}	
It is known that, if $\sum_{n=0}^{\infty}a_n=S$,  then $(A)-\sum_{n=0}^{\infty}a_n=S$.

\noindent

In order to state the following proposition we introduce some notation. For  $f\in H(\D)$ and $0<r\leq 1$ we set $f_r(z):=f(rz)$. It is clear that $\lim_{r\to 1^-}f_r=f$ in $(H(\D), \tau_0).$

\begin{proposition}\label{BenDefinitHdGeneralX}
Let $\mu$ be a positive finite Borel measure on $[0,1)$ and let $X\hookrightarrow (H(\D), \tau_0)$ be a Banach space of analytic functions. Assume that
 \begin{itemize}
\item[(a)] there is $M>0$ such that for each $f\in X$ there is $g\in L_1([0,1),\mu)$ satisfying $|f_r(t)|\leq g(t)$ for every $t\in [0,1)$, $0<r\leq 1$ and $\int_0^1 g(t)d\mu(t)\leq M\|f\|$.
\end{itemize}
Then $I_\mu:(X,\| \ \|) \to (H(\D),\tau_0)$ is a well defined continuous operator with representation
 $$I_{\mu}(f)(z)=\displaystyle\sum_{n=0}^{\infty}\left((A)-\sum_{k=0}^{\infty}\mu_{n+k}a_k\right)z^n, \ z\in \D,$$
for every $f(z)=\sum_{n=0}^{\infty}a_nz^n\in X.$
\noindent Whenever $\sum_{k=0}^{\infty}\mu_{n+k}a_k$ converges for every $n\in \N_0$, then  $I_{\mu}(f)=H_{\mu}(f)$ on $X.$
\end{proposition}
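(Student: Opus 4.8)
The plan is to fix $f(z)=\sum_{n=0}^\infty a_n z^n \in X$ and analyze $I_\mu(f)$ through the dilates $f_r$. First I would observe that hypothesis (a) guarantees $f|_{[0,1)} \in L_1([0,1),\mu)$ (take $r=1$ in the bound $|f_r(t)|\le g(t)$), so by Proposition \ref{welldefinedL1} we have $I_\mu(f)\in H(\D)$; moreover $\int_0^1 |f(t)|\,d\mu(t) \le M\|f\|$, which combined with the elementary estimate $|I_\mu(f)(z)| \le \frac{1}{1-|z|}\int_0^1|f(t)|\,d\mu(t)$ shows $I_\mu:(X,\|\cdot\|)\to (H(\D),\tau_0)$ is continuous (on each compact disc $|z|\le R$ the operator norm into the sup-norm is at most $M/(1-R)$).

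Next I would identify the Taylor coefficients of $I_\mu(f)$. The natural approach is to compute the $n$-th Taylor coefficient of $I_\mu(f)$ as a limit of coefficients of $I_\mu(f_r)$. For fixed $0<r<1$, the function $f_r$ is in $H(\overline{\D})$ (actually holomorphic on a larger disc), so expanding $\frac{1}{1-tz}=\sum_{n=0}^\infty t^n z^n$ and using that $t\mapsto f_r(t)=\sum_k a_k r^k t^k$ has a uniformly convergent series on $[0,1]$, one can interchange sum and integral (dominated convergence, with the finite measure $\mu$ and the bound $\sup_t|f_r(t)|<\infty$) to get
\begin{equation*}
I_\mu(f_r)(z) = \sum_{n=0}^\infty \Bigl(\sum_{k=0}^\infty \mu_{n+k} a_k r^k\Bigr) z^n,
\end{equation*}
so the $n$-th Taylor coefficient of $I_\mu(f_r)$ is exactly $\sum_{k=0}^\infty \mu_{n+k}a_k r^k$. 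The key point here is that for each fixed $n$ this is precisely the Abel-type partial sum $\sum_{k=0}^\infty (\mu_{n+k}a_k) r^k$ of the series $\sum_k \mu_{n+k}a_k$; I should check the inner series converges for each $0<r<1$, which follows since $\mu_{n+k}\le\mu_0$ and $\sum_k |a_k| r^k<\infty$ for $r<1$ (Taylor coefficients of a function in $H(\D)$ grow subexponentially).

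Then I would pass to the limit $r\to 1^-$. Since $f_r \to f$ in $(H(\D),\tau_0)$ and we may assume $f_r$ stays in a bounded subset of... actually more carefully: I want $I_\mu(f_r)\to I_\mu(f)$ in $\tau_0$. This follows from continuity of $I_\mu$ on $H(\D)$ restricted to a suitable family, but the cleanest route is to use hypothesis (a) directly: $|f_r(t)|\le g(t)$ with $g\in L_1(\mu)$ uniformly in $r$, and $f_r(t)\to f(t)$ pointwise on $[0,1)$, so by dominated convergence $I_\mu(f_r)(z)\to I_\mu(f)(z)$ for each $z\in\D$. Convergence of the holomorphic functions at every point, together with local boundedness (again from the $1/(1-R)$ estimate and the uniform bound $\int g\,d\mu\le M\|f\|$), gives $\tau_0$-convergence and hence convergence of Taylor coefficients: the $n$-th coefficient of $I_\mu(f)$ equals $\lim_{r\to1^-}\sum_{k=0}^\infty \mu_{n+k}a_k r^k = (A)\text{-}\sum_{k=0}^\infty \mu_{n+k}a_k$. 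This yields the stated representation. Finally, if $\sum_k \mu_{n+k}a_k$ converges for every $n$, then its Abel sum equals its ordinary sum by the remark recalled before the proposition, so the representation reduces to $\sum_n(\sum_k\mu_{n+k}a_k)z^n = H_\mu(f)(z)$, giving $I_\mu(f)=H_\mu(f)$ on $X$.

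The main obstacle I anticipate is justifying the interchange of summation and integration in the computation of the coefficients of $I_\mu(f_r)$ cleanly, and then correctly organizing the double limit (first recognizing the coefficient as an Abel partial sum, then passing $r\to1^-$) so that one genuinely lands on the Abel sum rather than on something requiring unjustified Fubini-type manipulations with the original $f$; using the dilates $f_r$, for which all series converge absolutely and uniformly on $[0,1]$, is what makes this rigorous, and hypothesis (a) is exactly what licenses the final dominated-convergence step uniformly in $r$.
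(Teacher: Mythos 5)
Your proposal is correct and follows essentially the same route as the paper: reduce to the dilates $f_r$, show $I_\mu(f_r)=H_\mu(f_r)$ by an absolutely convergent interchange of sum and integral, and identify the Taylor coefficients of $I_\mu(f)$ as Abel sums by letting $r\to 1^-$. The only (harmless) variations are that you get continuity from the direct bound $\sup_{|z|\le R}|I_\mu(f)(z)|\le \frac{M}{1-R}\|f\|$ and obtain $I_\mu(f_r)\to I_\mu(f)$ in $\tau_0$ via dominated convergence plus local boundedness (Vitali), whereas the paper uses a split-integral uniform estimate; both are valid.
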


\begin{proof} By Proposition \ref{welldefinedL1} and the hypothesis on $X$, $I_\mu(f)\in H(\D)$ for every $f\in X$.
 We show that $I_\mu:X\to H(\D)$ is continuous. Let $(f_n)_n\subseteq X$ be a sequence convergent to $f\in X$.  Fix $0<r_0<1$, $\eps>0$  and let $n_0\in\N$ such that $\|f_n-f\|<\frac{\eps (1-r_0)}{M}$ for each $n\geq n_0$. From the hypothesis, for each $n\in  \N$ there is $g_n\in L_1([0,1),\mu)$ such that  $|f_n(t)-f(t)|\leq g_n(t)$ for each $t\in [0,1)$ and $\int_0^1 g_n(t)d\mu(t)\leq M\|f_n-f\|$. Hence, for $n\geq n_0$,
 \begin{eqnarray*}
	\sup_{|z|\leq r_0}|I_{\mu}(f_n)(z)-I_{\mu}(f)(z)|&\leq& \sup_{ |z|\leq r_0}\int_{0}^{1}\frac{|f_n(t)-f(t)|}{|1-tz|}d\mu(t)\\
	&\leq&\frac{1}{1-r_0}\int_{0}^{1}|f_n(t)-f(t)|d\mu(t)\leq \frac{1}{1-r_0} \int_{0}^{1}g_n(t)d\mu(t)\leq \\
	&\leq& \frac{1}{1-r_0}M\|f_n-f\|\leq \eps
\end{eqnarray*}

 \noindent Therefore, $I_\mu:(X,\| \ \|) \to (H(\D),\tau_0)$ is continuous.

Now, let us see that $I_{\mu}$ can be represented in terms of the Hilbert-type operator. First, we prove  that  given $f\in X$,   $\lim_{r\to 1^-}I_{\mu}(f_r)=I_{\mu}(f)$  in $\tau_0$. Consider $g\in L_1([0,1),\mu)$ satisfying (a) for $f$. Fix $0<R<1,$ $\eps>0$  and let $0<r_0<1$ be such that $\int_{r_0}^1g(t)d\mu(t)\leq\frac{(1-R)\eps}{4}$.  Since $\lim_{r\to 1^-}f_r=f$ in the compact open topology, there is $0<r_1<1$ such that $|f_r(z)-f(z)|\leq \frac{(1-R)\eps}{2}$ for $r\geq r_1$, $|z|\leq r_0$. Hence, for every $r\geq r_1$ we get
		 \begin{eqnarray}\label{eq_provabola}
	\sup_{ |z|\leq R}|I_{\mu}(f_r)(z)-I_{\mu}(f)(z)|&\leq& \sup_{ |z|\leq R}\int_{0}^{1}\frac{|f_r(t)-f(t)|}{|1-tz|}d\mu(t) \nonumber \\
	&\leq&\frac{1}{1-R}\int_{0}^{r_0}|f_r(t)-f(t)|d\mu(t)+\frac{2}{1-R}\int_{r_0}^{1}g(t)d\mu(t)	\nonumber \\
	&\leq&  \eps.
\end{eqnarray}

 Given $f(z)=\sum_{k=0}^{\infty}a_kz^k\in X$ and $0<r<1,$ then $f_r(z)=\sum_{k=0}^{\infty}a_kr^kz^k\in H^{\infty}.$  Observe that $I_{\mu}(f_r)=H_{\mu}(f_r)$.  Indeed, given  $z\in \D$ we get
	\begin{eqnarray*}
		I_{\mu}(f_r)(z)&=&\int_{0}^{1}\frac{f(rt)}{1-tz}d\mu(t)=\int_{0}^{1}\frac{\sum_{k=0}^{\infty}a_kr^kt^k}{1-tz}d\mu(t)=\sum_{k=0}^{\infty}a_kr^k\int_{0}^{1}\frac{t^k}{1-tz}d\mu(t)\nonumber\\
		&=&\sum_{k=0}^{\infty}a_kr^k\int_{0}^{1}t^k\left(\sum_{n=0}^{\infty}(tz)^n\right)d\mu(t)=\sum_{k=0}^{\infty}a_kr^k\left(\sum_{n=0}^{\infty}\mu_{n+k}z^n\right)=\sum_{k=0}^{\infty}\sum_{n=0}^{\infty}\mu_{n+k}a_kr^kz^n\nonumber\\
		&=&\sum_{n=0}^{\infty}\left(\sum_{k=0}^{\infty}\mu_{n+k}a_kr^k\right)z^n=H_{\mu}(f_r)(z).
	\end{eqnarray*}
	We can perform all the permutations above because, for a fixed $0<r<1$ and $0<R<1$ the series is absolutely convergent on $t\in[0,1]$ and  $|z|\leq R$.
	
	Since  $I_{\mu}(f_r)$ converges to $I_{\mu}(f)$ in $\tau_0$ as $r\rightarrow 1^-,$ we get $\lim_{r\rightarrow 1^-}\sum_{n=0}^{\infty}\left(\sum_{k=0}^{\infty}\mu_{n+k}a_kr^k\right)z^n=I_{\mu}(f)(z)$ in $\tau_0,$ and thus we get convergence on the coordinates, i.e.  $\lim_{r\rightarrow 1^-}\sum_{k=0}^{\infty}\mu_{n+k}a_kr^k=\frac{(I_{\mu}(f))^{(n)}(0)}{n!}$ for every $n\in \N.$ Therefore,  for every $f\in X,$
	$$I_\mu(f)(z)=\sum_{n=0}^{\infty}\left((A)-\sum_{k=0}^{\infty}\mu_{n+k}a_k\right)z^n, \ z\in \D.$$
\end{proof}

\begin{theorem}\label{HvH1}
Let $\mu$ be a positive finite Borel measure on $[0,1)$ and let $X\hookrightarrow (H(\D),\tau_0)$ be a Banach space of analytic functions. The operator $I_\mu:X\to H(\D)$ is well defined and continuous and admits the expression $I_\mu(f)=\sum_{n=0}^{\infty}\left((A)-\sum_{k=0}^{\infty}\mu_{n+k}a_k\right)z^n$ for $f(z)=\sum_{n=0}^{\infty}a_nz^n\in X$ and $I_\mu(f)=H_\mu(f)$ if $\sum_{k=0}^{\infty} \mu_{n+k}a_k$ is convergent for each $n\in\N_0$ in the following cases:

\begin{itemize}
\item[(i)] When $X=H_v^\infty$ and $\frac1v\in  L_1([0,1),\mu)$, $v$ being a  weight on $\D.$ If the weight $v$ is essential, then the condition $\frac1v\in  L_1([0,1),\mu)$ is also necessary in order to have $I_\mu:X\to H(\D)$ well defined.

\item[(ii)] When $X=H^1$ and $\mu$ is a Carleson measure.
\end{itemize}
\end{theorem}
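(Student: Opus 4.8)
The strategy is to verify, in each of the two cases, the hypothesis (a) of Proposition \ref{BenDefinitHdGeneralX}, so that the stated representation of $I_\mu$ and the identity $I_\mu = H_\mu$ (when the inner series converge) follow immediately; the necessity assertion in (i) will be handled separately using Lemma \ref{essential}.

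For (i), let $f \in H_v^\infty$. Since $v$ is radial and non-increasing in $|z|$, for every $0 < r \le 1$ and every $t \in [0,1)$ we have $|f_r(t)| = |f(rt)| \le \|f\|_v / v(rt) \le \|f\|_v / v(t)$, because $v(rt) \ge v(t)$. Thus the dominating function $g(t) := \|f\|_v / v(t)$ works, and $\int_0^1 g\, d\mu = \|f\|_v \int_0^1 \tfrac{1}{v(t)}\, d\mu(t) \le M \|f\|_v$ with $M := \int_0^1 \tfrac{1}{v} \, d\mu < \infty$ by hypothesis. Hypothesis (a) holds, and Proposition \ref{BenDefinitHdGeneralX} gives the conclusion. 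For the necessity: assume $v$ is essential and $I_\mu : H_v^\infty \to H(\D)$ is well defined. By Lemma \ref{essential} there are $f_v \in H_v^\infty$ and $D(v) > 0$ with $1/v(t) \le D(v) f_v(t)$ for $t \in [0,1)$, and $f_v \ge 0$ on $[0,1)$. Since $I_\mu(f_v)$ is defined, the observation at the start of Section 2 shows the restriction of $f_v$ to $[0,1)$ lies in $L_1([0,1),\mu)$, i.e. $\int_0^1 f_v\, d\mu < \infty$; hence $\int_0^1 \tfrac1v \, d\mu \le D(v)\int_0^1 f_v\, d\mu < \infty$, which is $\tfrac1v \in L_1([0,1),\mu)$.

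For (ii), let $f \in H^1$. Here the natural dominating function is the maximal function: set $g(t) := \sup_{0 < r \le 1} |f(rt)| = \sup_{0 < \rho \le t}|f(\rho)|$ (nondecreasing in $t$), or more robustly the radial maximal function $M_{\mathrm{rad}}f(t) := \sup_{0<\rho<1}|f(\rho t)|$, so that $|f_r(t)| \le g(t)$ trivially for all $0 < r \le 1$, $t \in [0,1)$. It remains to check $\int_0^1 g\, d\mu \le M \|f\|_{H^1}$. This is where a Carleson-measure argument enters: one uses that the radial (indeed nontangential) maximal operator is bounded from $H^1$ into $L^1$ of any Carleson measure on the disc — equivalently, $\mu$ viewed as a measure on $[0,1) \subset \D$ being a $1$-Carleson measure means $\int_{\D} |F|\, d\mu \lesssim \|F\|_{H^1}$ for the maximal function $F$, by the standard tent-space / Carleson embedding theorem. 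Concretely, writing $g = M_{\mathrm{rad}}f$ and using the weak-type and $L^1$ bounds for the Hardy–Littlewood-type maximal function of an $H^1$ function together with the defining inequality $\mu([t,1)) = O(1-t)$, one gets the required estimate $\int_0^1 g\, d\mu \lesssim \|f\|_{H^1}$. Once (a) is verified, Proposition \ref{BenDefinitHdGeneralX} finishes the case.

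The main obstacle is the estimate $\int_0^1 M_{\mathrm{rad}}f \, d\mu \lesssim \|f\|_{H^1}$ in case (ii): one must either invoke the Carleson embedding theorem in the right form (mapping $H^1$ maximal functions into $L^1(\mu)$ for a Carleson measure $\mu$) or give a self-contained proof via the layer-cake formula, $\int_0^1 g\, d\mu = \int_0^\infty \mu(\{t : g(t) > \lambda\})\, d\lambda$, combined with the distributional estimate for the maximal function and the Carleson bound on $\mu([t,1))$. Case (i) is essentially routine once one exploits the monotonicity of the radial weight, and the necessity part is a short consequence of Lemma \ref{essential} and the pointwise formula $I_\mu(f)(0) = \int_0^1 f\, d\mu$.
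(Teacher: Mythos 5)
Your treatment of case (i), including the necessity assertion, is correct and is essentially the paper's own argument: the dominating function is $g=\|f\|_v/v$ (the paper phrases the pointwise bound as $|f_r(t)|\le\|f_r\|_v/v(t)\le\|f\|_v/v(t)$ rather than via $v(rt)\ge v(t)$, but this is the same use of the radial monotonicity of $v$), and necessity is read off from $I_\mu(f_v)(0)=\int_0^1 f_v\,d\mu$ combined with Lemma \ref{essential}, exactly as you do.

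For case (ii) you take a genuinely different route. You dominate $|f_r(t)|$ by the radial maximal function $M_{\mathrm{rad}}f(t)=\sup_{0<\rho\le 1}|f(\rho t)|$ and appeal to Carleson-embedding/maximal-function theory for $\int_0^1 M_{\mathrm{rad}}f\,d\mu\lesssim\|f\|_{H^1}$. This can be made rigorous: $M_{\mathrm{rad}}f$ is nondecreasing on $[0,1)$, so $\{M_{\mathrm{rad}}f>\lambda\}$ is an interval $(t_\lambda,1)$ whose $\mu$-measure is $\lesssim 1-t_\lambda$ by the Carleson condition; the nontangential shadow of a point $s$ near $t_\lambda$ with $|f(s)|>\lambda$ gives $1-t_\lambda\lesssim|\{f^*>\lambda\}|$; and the layer-cake formula together with the Hardy--Littlewood bound $\|f^*\|_{L^1(\T)}\lesssim\|f\|_{H^1}$ finishes. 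But this is precisely the step you leave as a sketch, and it rests on the nontrivial maximal-function theory of $H^1$. The paper's argument is considerably lighter and entirely coefficient-based: it takes $g(t)=\sum_{n}|a_n|t^n$, so that $\int_0^1 g\,d\mu=\sum_{n}|a_n|\mu_n$ by monotone convergence, and then combines the moment characterization of Carleson measures ($\mu_n=O(1/(n+1))$, quoted from \cite{GGM}) with Hardy's inequality $\sum_{n}|a_n|/(n+1)\le\pi\|f\|_{H^1}$. If you keep your route, you should either cite the Carleson embedding theorem in the precise maximal-function form you need or write out the layer-cake argument above; as written, the crucial inequality is asserted rather than proved, whereas the elementary majorant $\sum_n|a_n|t^n$ makes the whole verification of hypothesis (a) a two-line computation.
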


\begin{proof}
 (i)  Just observe that  we can apply Proposition \ref{BenDefinitHdGeneralX} with $M=\int_{0}^{1}\frac{1}{v(t)} d\mu(t)$ and  $g=\frac{\|f\|_v}{v}\in  L_1([0,1),\mu)$, for a given $f\in H_v^\infty$. As   $\|f_r\|_v\leq \|f\|_v$ for each $f\in H_v^\infty$ and  $0<r\leq 1$, then $|f_r(t)|\leq \frac{||f_r||_v}{v(t)}\leq g(t)$ for $0\leq t<1$, $0<r\leq 1$.

Now assume that  $I_\mu:H_v^\infty\to H(\D)$ is well defined and that $v$ is essential. By Lemma \ref{essential} there are $f_v \in H_v^\infty$ and $D(v)>0$ such that $1/v(t) \leq D(v) f_v(t)$ for each $t \in [0,1)$. We have
$$
I_\mu(f_v)(0)=\int_0^1 f_v(t) d\mu(t) \geq \frac{1}{D(v)} \int_0^1\frac{1}{v(t)}d\mu(t),
$$
and $1/v(t) \in  L_1([0,1),\mu)$.

(ii) Let $C>0$ be such that $\mu_n\leq \frac{C}{n+1}$ for each $n\in\N_0$. Given $f(z)=\sum_{n=0}^{\infty}a_nz^n\in H^1$, consider $g(t)=\sum_{n=0}^{\infty} |a_n| t^n,$ $0\leq t<1$. The monotone convergence theorem combined with the Hardy inequality give $g\in L_1([0,1),\mu)$ and
$\int_0^1 g(t)d\mu(t)=\sum_{n=0}^{\infty}|a_n|\mu_n\leq C\sum_{n=0}^{\infty}\frac{|a_n|}{n+1}\leq C\pi\|f\|_{H^1},$
and then  (a) in Proposition \ref{BenDefinitHdGeneralX} is satisfied, since $|f_r(t)|\leq g(t)$   holds for $0\leq t<1$,  $0<r\leq 1$.
\end{proof}

\section{$I_\mu$ as an operator between weighted Banach spaces of analytic functions.}

Now  we study the  action of  $I_\mu$ on weighted Banach spaces of holomorphic functions defined by general weights as well as for $H^\infty$, and give a characterization of the boundedness and compactness of the operator in the case of standard weights. In what follows, $B_v^\infty$ denotes the closed unit ball of $H_v^\infty.$

\subsection{General weights}
\begin{lemma}
\label{techlem}
Let $\mu$ be a positive finite Borel measure on $[0,1)$ and let $v$ be weight on $\D$ such that $1/v\in L_1([0,1),\mu)$. Then $I_\mu:(B_v^\infty,\tau_0)\to H(\D)$ is continuous.
\end{lemma}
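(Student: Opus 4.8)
The plan is to reduce the claim to a sequential statement and then apply the dominated convergence theorem, in the spirit of the continuity argument of Proposition \ref{BenDefinitHdGeneralX}, but exploiting that on the ball $B_v^\infty$ one single $\mu$-integrable function dominates all the relevant differences. Since $(H(\D),\tau_0)$ is a Fréchet space it is metrizable, and hence so is its subspace $(B_v^\infty,\tau_0)$; therefore it suffices to prove that $I_\mu$ is sequentially continuous on $(B_v^\infty,\tau_0)$.

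First I would record the elementary estimates. For every $f\in B_v^\infty$ one has $v(t)|f(t)|\leq \|f\|_v\leq 1$, hence $|f(t)|\leq 1/v(t)$ for $t\in[0,1)$; since $1/v\in L_1([0,1),\mu)$ by hypothesis, the restriction of $f$ to $[0,1)$ belongs to $L_1([0,1),\mu)$ and, by Proposition \ref{welldefinedL1}, $I_\mu(f)\in H(\D)$, so the map $I_\mu:(B_v^\infty,\tau_0)\to H(\D)$ is at least well defined. Moreover, the function $g:=2/v$ belongs to $L_1([0,1),\mu)$ and satisfies $|f(t)-h(t)|\leq g(t)$ for all $t\in[0,1)$ and all $f,h\in B_v^\infty$.

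Now take a sequence $(f_n)_n\subseteq B_v^\infty$ with $f_n\to f$ in $(B_v^\infty,\tau_0)$; in particular $f\in B_v^\infty$ and $f_n(t)\to f(t)$ for each $t$ in the real segment $[0,1)\subseteq\D$. Since $|f_n(t)-f(t)|\leq g(t)$ with $g\in L_1([0,1),\mu)$, dominated convergence yields $\int_0^1|f_n(t)-f(t)|\,d\mu(t)\to 0$. Fix $0<R<1$; for $|z|\leq R$ and $t\in[0,1)$ one has $|1-tz|\geq 1-R$, so
$$\sup_{|z|\leq R}|I_\mu(f_n)(z)-I_\mu(f)(z)|\leq \sup_{|z|\leq R}\int_0^1\frac{|f_n(t)-f(t)|}{|1-tz|}\,d\mu(t)\leq \frac{1}{1-R}\int_0^1|f_n(t)-f(t)|\,d\mu(t),$$
and the right-hand side tends to $0$ as $n\to\infty$. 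As $R\in(0,1)$ was arbitrary, $I_\mu(f_n)\to I_\mu(f)$ uniformly on compact subsets of $\D$, that is, in $\tau_0$. This proves sequential continuity, hence continuity, of $I_\mu:(B_v^\infty,\tau_0)\to H(\D)$.

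The argument is routine; the only points deserving a little care are that the limit $f$ remains in $B_v^\infty$ (automatic, since sequential continuity is tested with sequences lying inside $(B_v^\infty,\tau_0)$), that $\tau_0$-convergence does imply pointwise convergence on $[0,1)$ (true because $[0,1)\subseteq\D$ and $\tau_0$ is finer than pointwise convergence on $\D$), and the replacement of the norm-dependent majorant $\|f\|_v/v$ from Proposition \ref{BenDefinitHdGeneralX} by the uniform majorant $2/v$ valid throughout the unit ball — this is precisely what makes the coarser topology $\tau_0$ on the domain good enough for continuity.
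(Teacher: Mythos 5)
Your proof is correct and follows essentially the same route as the paper: reduce to sequential continuity on the metrizable set $(B_v^\infty,\tau_0)$, use the uniform majorant $1/v$ (you use $2/v$) available on the ball, and pass to the limit in $\int_0^1 |f_n(t)-f(t)|\,d\mu(t)$ together with the bound $|1-tz|\geq 1-R$. The only cosmetic difference is that you invoke the dominated convergence theorem where the paper performs the explicit $\eps$-splitting of the integral from (\ref{eq_provabola}).
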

\begin{proof}
The hypothesis and Theorem \ref{HvH1}(i) imply that $I_\mu:H_v^\infty\to H(\D)$ is a well defined continuous linear operator. Let us see that for every $(f_n)_n\subseteq B_v^\infty$ convergent to $f\in  B_v^\infty$ in $\tau_0,$ then $(I_{\mu}(f_n))_n$ converges to $I_{\mu}(f)$ in $\tau_0$. If $(f_n)_n\subseteq B_v^\infty$, then $|f_n(t)|\leq \frac{1}{v(t)}$ for every   $0\leq t<1,$ and every $n\in \N.$ As  $\frac{1}{v(t)}\in L_{1}([0,1),\mu),$ the conclusion follows  proceeding analogously as in (\ref{eq_provabola}) in Proposition \ref{BenDefinitHdGeneralX}, taking $g=1/v$ and considering $f_n$ instead of $f_r.$
\end{proof}

In the next theorem we give sufficient conditions for the continuity of the operator $I_{\mu}$ between weighted Banach spaces  of holomorphic functions of type $H_{v}^\infty$. These conditions are stronger than the assumption  $\int_{0}^{1}\frac{1}{v(t)}d\mu(t)<\infty$, which appeared in  Theorem \ref{HvH1}.

\begin{theorem}
\label{Hv}
Let $\mu$ be a positive finite Borel measure on $[0,1)$ and let $v,w$ be weights on $\D.$  Consider the following statements:

\begin{itemize}

\item[(i)] $C(v,w):=\sup_{0\leq r<1}w(r)\int_{0}^{1}\frac{d\mu(t)}{v(t)(1-tr)}<\infty,$
		\item[(ii)]  $I_\mu: H_{v}^\infty\to H_{w}^\infty$ is well defined and continuous.
	\end{itemize}

Then (i) always implies (ii). If we assume in addition  that the weight $v$ is essential, then (i) and (ii) are equivalent.
	
\end{theorem}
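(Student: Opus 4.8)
The plan is to prove (i)$\Rightarrow$(ii) by a direct estimate, and then (ii)$\Rightarrow$(i) using the essentiality of $v$ together with Lemma \ref{essential}.

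For (i)$\Rightarrow$(ii): assume $C(v,w)<\infty$. Since $C(v,w)<\infty$ and $1-tr\ge 1-t$ for $r\le 1$... actually more carefully, from $C(v,w)<\infty$ we deduce $\int_0^1 \frac{d\mu(t)}{v(t)}<\infty$ (take $r=0$), so by Theorem \ref{HvH1}(i) the operator $I_\mu:H_v^\infty\to H(\D)$ is well defined and has the stated Abel-type representation. Now I would just bound the weighted sup-norm: for $f\in H_v^\infty$ and $z\in\D$ with $|z|=r$,
$$w(z)|I_\mu(f)(z)| \le w(r)\int_0^1 \frac{|f(t)|}{|1-tz|}\,d\mu(t) \le w(r)\int_0^1 \frac{\|f\|_v}{v(t)(1-tr)}\,d\mu(t) \le C(v,w)\|f\|_v,$$
using $|1-tz|\ge 1-t|z| = 1-tr$ and $|f(t)|\le \|f\|_v/v(t)$ for $t\in[0,1)$. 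Hence $I_\mu(f)\in H_w^\infty$ with $\|I_\mu(f)\|_w\le C(v,w)\|f\|_v$, giving boundedness. Continuity of the linear operator is then immediate.

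For (ii)$\Rightarrow$(i) under the extra hypothesis that $v$ is essential: I would test $I_\mu$ on a single well-chosen function. By Lemma \ref{essential} there are $f_v\in H_v^\infty$ and $D(v)>0$ with $1/v(t)\le D(v) f_v(|t|)$ for all $t\in[0,1)$; moreover $f_v(t)>0$ on $[0,1)$. The key point is that $f_v$ has nonnegative Taylor coefficients (this is how $f_v$ is constructed in the proof of Lemma \ref{essential}), so for any fixed $r\in[0,1)$ and $t\in[0,1)$ one has $f_v(t)\cdot\frac{1}{1-tr} = \bigl(\sum_k a_k t^k\bigr)\bigl(\sum_n (tr)^n\bigr) = \sum_m \bigl(\sum_{k\le m} a_k r^{\,m-k}\bigr) t^m$, a power series with nonnegative coefficients, hence $I_\mu(f_v)(r)=\int_0^1 \frac{f_v(t)}{1-tr}\,d\mu(t)$ is a legitimate nonnegative quantity and in fact, evaluating at the real point $z=r\in[0,1)$,
$$w(r)\,\bigl|I_\mu(f_v)(r)\bigr| = w(r)\int_0^1 \frac{f_v(t)}{1-tr}\,d\mu(t) \ge \frac{1}{D(v)}\, w(r)\int_0^1 \frac{d\mu(t)}{v(t)(1-tr)}.$$
Taking the supremum over $0\le r<1$ and using that $\|I_\mu\|$ is finite by (ii) (so $\|I_\mu(f_v)\|_w\le \|I_\mu\|\,\|f_v\|_v<\infty$) yields $C(v,w)\le D(v)\|I_\mu\|\,\|f_v\|_v<\infty$, which is (i).

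The main obstacle is the necessity direction: one must produce, for \emph{every} radius $r$ simultaneously, a test function realizing the integral $\int_0^1 \frac{d\mu(t)}{v(t)(1-tr)}$ up to a constant, and the single function $f_v$ from Lemma \ref{essential} does this precisely because $1/v \lesssim f_v$ pointwise on $[0,1)$ and $f_v\in H_v^\infty$ is a fixed element of the space — so no $r$-dependent choice is needed. I should double-check that evaluating $I_\mu(f_v)$ at the real point $r$ and using positivity of the integrand is valid (it is, since $f_v\ge 0$ on $[0,1)$ and $1-tr>0$), and that $1-t|z|\le|1-tz|$ which is elementary. If one is worried about whether $f_v$ genuinely has nonnegative coefficients, the estimate in (ii)$\Rightarrow$(i) in fact only needs $f_v\ge 0$ on $[0,1)$ and $f_v\in H_v^\infty$ with $1/v\le D(v)f_v$ on $[0,1)$, all of which are explicitly furnished by Lemma \ref{essential}; the nonnegativity of coefficients is a convenience, not a necessity.
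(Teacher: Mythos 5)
Your proposal is correct and follows essentially the same route as the paper: for (i)$\Rightarrow$(ii) the same reduction to Theorem \ref{HvH1}(i) via $r=0$ followed by the pointwise estimate $|1-tz|\ge 1-tr$, and for (ii)$\Rightarrow$(i) the same test function $f_v$ from Lemma \ref{essential} evaluated at the real point $z=r$, using only $1/v\le D(v)f_v\ $ and positivity of the integrand (your observation that nonnegative Taylor coefficients are not needed is accurate). The only cosmetic difference is that the paper finishes the forward direction with the closed graph theorem, whereas you read off the operator norm bound $\|I_\mu(f)\|_w\le C(v,w)\|f\|_v$ directly, which is equally valid.
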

\begin{proof}
If we assume (i) then $1/v\in L_1([0,1),\mu)$ (just take $r=0$). Thus, Theorem  \ref{HvH1}(i) implies that $I_\mu:H_v^\infty\to H(\D)$ is a well defined continuous operator. Moreover, for each $f\in H_v^\infty,$
$$\sup_{0\leq r<1} w(r)\max_{|z|=r}|I_\mu(f)(z)|\leq \|f\|_v \sup_{0\leq r<1}w(r)\int_{0}^{1}\frac{d\mu(t)}{v(t)(1-tr)},$$

hence $I_\mu(H_v^\infty)\subseteq H_w^\infty$ and (ii) is a consequence of the closed graph theorem.

For the converse, we assume (ii) and  that the weight $v$ is essential.  By Lemma \ref{essential} there are $f_v \in H_v^\infty$ and $D(v)>0$ such that $1/v(t) \leq D(v) f_v(t)$ for each $t \in [0,1)$. We have
	\begin{equation}\label{norm2}
		w(r)\int_{0}^{1}\frac{d\mu(t)}{v(t)(1-tr)}\leq  D(v) \sup_{z\in \D, |z|=r}w(z)\left|\int_0^1\frac{f_v(t)}{1-tz}d\mu(t)     \right|= D(v)\sup_{z\in \D, |z|=r}w(z)|I_{\mu}(f_v)(z)|.
	\end{equation}
	So,  if we assume $I_{\mu}(f_v)\in H_{w}^\infty,$ we get that the condition in (i) is also necessary.
\end{proof}

\begin{proposition}
\label{Hv0}
Let $\mu$ be a positive finite Borel measure on $[0,1)$ and let $v,w$ be weights on $\D$ such that   $1/v\in L_1([0,1),\mu)$.  The following conditions are equivalent:

\begin{itemize}
		\item[(i)]  $I_\mu: H_{v}^0\to H_{w}^0$ is continuous.
         \item[(ii)] $I_\mu:H_v^\infty\to H_w^\infty$ is  continuous and $I_\mu(1)\in H_w^0$.
\end{itemize}
	If the  equivalent conditions hold, then the bitranspose $I_{\mu}^{**}$ of $I_{\mu}: H_{v}^0\to H_{w}^0$ coincides with $I_{\mu}: H_{v}^\infty\to H_{w}^\infty,$ and $\|I_{\mu}\|_{\mathcal{L}(H_{v}^\infty, H_{v}^\infty)}=\|I_{\mu}\|_{\mathcal{L}(H_{v}^0, H_{v}^0)}.$
\end{proposition}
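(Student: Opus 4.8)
The plan is to exploit two structural facts established in the introduction: first, $H_v^0$ is the closure of the polynomials in $H_v^\infty$; second, $H_v^\infty$ is canonically isometric to the bidual $(H_v^0)^{**}$ (and similarly for $w$), with the canonical embedding $H_v^0 \hookrightarrow (H_v^0)^{**}$ corresponding to the inclusion $H_v^0 \subseteq H_v^\infty$. The implication (i)$\Rightarrow$(ii) should be the easy direction: if $I_\mu: H_v^0 \to H_w^0$ is continuous, then its bitranspose $I_\mu^{**}: (H_v^0)^{**} \to (H_w^0)^{**}$ is continuous with the same norm, and one must check that under the identifications $(H_v^0)^{**} = H_v^\infty$ and $(H_w^0)^{**} = H_w^\infty$ it acts as $I_\mu$. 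Since $1/v \in L_1([0,1),\mu)$, Theorem \ref{HvH1}(i) already tells us $I_\mu: H_v^\infty \to H(\D)$ is well defined and continuous into $(H(\D),\tau_0)$; so it suffices to identify $I_\mu^{**}$ with $I_\mu$ on a $\tau_0$-dense subset where both are $\tau_0$-continuous — the polynomials will do, since $I_\mu$ and $H_\mu$ agree there and $I_\mu^{**}$ extends $I_\mu|_{H_v^0}$. That $I_\mu^{**}$ agrees with the $\tau_0$-continuous operator $I_\mu: H_v^\infty \to H(\D)$ on all of $H_v^\infty$ then follows because $H_v^0$ is $\sigma(H_v^\infty, (H_v^0)^*)$-dense in $H_v^\infty$ (Goldstine) and both maps are continuous from $(B_v^\infty,\tau_0)$ to $H(\D)$ — here I would invoke Lemma \ref{techlem} for $I_\mu$ and a standard compactness/metrizability argument on bounded sets for $I_\mu^{**}$. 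In particular $1 = I_\mu^{**}(1)|_{\ldots}$ lands in $H_w^0$ because $1 \in H_v^0$ and $I_\mu(H_v^0) \subseteq H_w^0$, giving $I_\mu(1) \in H_w^0$; and the norm equality drops out of $\|I_\mu^{**}\| = \|I_\mu: H_v^0 \to H_w^0\|$.

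For the converse (ii)$\Rightarrow$(i), the idea is: given $f \in H_v^0$, I must show $I_\mu(f) \in H_w^0$, and then continuity follows from continuity of $I_\mu: H_v^\infty \to H_w^\infty$ together with the closed graph theorem (or simply because $H_v^0$ is a closed subspace). Since $f \in H_v^0$ is a $\|\cdot\|_v$-limit of polynomials $p_n$, and $I_\mu: H_v^\infty \to H_w^\infty$ is bounded, $I_\mu(p_n) \to I_\mu(f)$ in $\|\cdot\|_w$; as $H_w^0$ is closed in $H_w^\infty$, it is enough to know $I_\mu(p_n) \in H_w^0$ for every polynomial $p_n$. Writing $p_n(z) = \sum_{k=0}^{N} c_k z^k$, linearity reduces this to showing $I_\mu(z^k) \in H_w^0$ for each $k \in \N_0$. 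For $k=0$ this is exactly the hypothesis $I_\mu(1) \in H_w^0$. For $k \geq 1$ I would observe $I_\mu(z^k)(z) = \int_0^1 \frac{t^k}{1-tz}\, d\mu(t) = \sum_{n=0}^\infty \mu_{n+k} z^n$, and compare with $I_\mu(1)(z) = \sum_{n=0}^\infty \mu_n z^n$: since the moments are decreasing, $\mu_{n+k} \le \mu_n$, so $w(r)\, |I_\mu(z^k)(re^{i\theta})| \le w(r) \sum_{n} \mu_{n+k} r^n$, and a small estimate shows this vanishes as $r \to 1^-$ whenever $w(r)\sum_n \mu_n r^n$ does — which holds because $I_\mu(1) \in H_w^0$ forces $w(r) \sum_n \mu_n r^n \to 0$ (the Taylor coefficients being nonnegative, the sup over $|z|=r$ is attained at $z=r$). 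Hence $I_\mu(z^k) \in H_w^0$ for all $k$, completing the argument.

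I expect the main obstacle to be the careful bookkeeping in (i)$\Rightarrow$(ii): verifying rigorously that the abstract bitranspose $I_\mu^{**}$, a priori just a bounded operator between bidual spaces, really coincides with the concrete integral operator $I_\mu$ on $H_v^\infty$. The subtlety is matching the weak-star topology of $(H_v^0)^{**} = H_v^\infty$ with the pointwise-evaluation structure, i.e.\ noting that point evaluations $\delta_z$, $z \in \D$, lie in $(H_v^0)^*$ and that a weak-star convergent bounded net in $H_v^\infty$ converges pointwise, hence (being bounded, so normal-family precompact) converges in $\tau_0$; this is what lets Lemma \ref{techlem} bridge the two descriptions of the operator. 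Everything else — density of polynomials, the closed-graph appeal, the elementary moment comparison $\mu_{n+k} \le \mu_n$ — is routine.
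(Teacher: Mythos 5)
Your proposal is correct and follows essentially the same route as the paper: for (ii)$\Rightarrow$(i) you use the identical coefficient comparison $\mu_{n+k}\le\mu_n$ to get $I_\mu(z^k)\in H_w^0$ and then density of the polynomials, and for (i)$\Rightarrow$(ii) and the final assertions you carry out the bidual/Goldstine argument via Lemma \ref{techlem} that the paper delegates to \cite[Lemma 1]{BBJCesaro}. The only difference is that you spell out the weak-star-to-$\tau_0$ bookkeeping that the paper leaves to the citation; the substance is the same.
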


\begin{proof}
Assume that (i) holds. Then $I_\mu(1)\in H_w^0$  immediately. We can conclude by Lemma \ref{techlem}, proceeding  analogously as in \cite[Lemma 1]{BBJCesaro}.

If we assume that (ii) holds,  for each $0<r<1$,
\begin{equation}
\max_{|z|=r}|I_\mu(z^k)|=\sum_{n=0}^{\infty} \mu_{n+k}r^n\leq \sum_{n=0}^{\infty} \mu_{n}r^n=\max_{|z|=r}|I_\mu(1)|,
\end{equation}
\noindent we get $I_\mu(z^k)\in H_w^0$ for each $k\in\N$. Since the  polynomials $\mathcal{P}\subseteq H_v^0$ are norm dense in $H_v^0$, we get $I_\mu(H_v^0)\subseteq H_w^0$ and  $I_\mu: H_{v}^0\to H_{w}^0$ is continuous by the closed graph theorem.
 The last assertion follows reasoning analogously as in \cite[Lemma 1]{BBJCesaro}. 	
\end{proof}

\begin{proposition}\label{exemples C_mu(1)}
		Let $\mu$ be a positive finite Borel measure on $[0,1)$ and $v$ a weight on $\D$ such that $1/v\in L_1([0,1),\mu)$. Consider $I_{\mu}: H_v^{\infty}\rightarrow H(\D)$.   Then:
	\begin{itemize}
		\item[(i)] If  $\mu$ is a Carleson measure and $w$ is a weight such that  $w(r)|\log (1-r)|\rightarrow 0$ as $r\rightarrow 1^-$,  then $I_{\mu}(1)\in H_{w}^{0}.$ This condition is satisfied, for instance, in case $w$ is a standard weight.
		\item[(ii)] 	If $\mu$ is an $s$-Carleson measure for $0<s<1$, then $I_\mu(1)\in H_{v_\gamma}^{0}$ for each $\gamma>1-s$.
	\end{itemize}	
	
\end{proposition}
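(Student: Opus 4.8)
The plan is to compute the behaviour of $I_\mu(1)(z) = \int_0^1 \frac{d\mu(t)}{1-tz}$ near the boundary $|z| \to 1^-$, using the Carleson (or $s$-Carleson) hypothesis to estimate the relevant integral, and then to multiply by the weight $w$ and check that the product tends to $0$.

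For part (i), first observe that since $1/v \in L_1([0,1),\mu)$ the operator $I_\mu : H_v^\infty \to H(\D)$ is well defined by Theorem~\ref{HvH1}(i), and moreover $\mu$ being a finite measure means $1 \in L_1([0,1),\mu)$ trivially, so $I_\mu(1) \in H(\D)$. The key is the pointwise bound $|I_\mu(1)(z)| \leq \int_0^1 \frac{d\mu(t)}{|1-tz|} \leq \int_0^1 \frac{d\mu(t)}{1-t|z|}$, valid since $|1-tz| \geq 1 - t|z|$. So it suffices to show $w(r)\int_0^1 \frac{d\mu(t)}{1-tr} \to 0$ as $r \to 1^-$. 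I would estimate $\int_0^1 \frac{d\mu(t)}{1-tr}$ by the standard device of splitting the integral dyadically, or more directly by the well-known fact (which follows from $\mu$ being a Carleson measure and the distribution function estimate $\mu([t,1)) = O(1-t)$) that $\int_0^1 \frac{d\mu(t)}{1-tr} = O(|\log(1-r)|)$ as $r \to 1^-$. Concretely, write $\frac{1}{1-tr} = \int_0^\infty e^{-(1-tr)s}\,ds$ or, more elementarily, use the layer-cake / summation-by-parts identity $\int_0^1 \frac{d\mu(t)}{1-tr} \lesssim \mu([0,1)) + r\int_0^1 \frac{\mu([t,1))}{(1-tr)^2}\,dt$ and then feed in $\mu([t,1)) \lesssim 1-t \leq 1 - tr$ to get $\int_0^1 \frac{dt}{1-tr} \cong |\log(1-r)|$. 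Combining with the hypothesis $w(r)|\log(1-r)| \to 0$ gives $w(r)|I_\mu(1)(z)| \to 0$ as $|z| = r \to 1^-$, i.e. $I_\mu(1) \in H_w^0$. For the final sentence of (i): if $w = v_\beta$ is a standard weight, then $(1-r)^\beta |\log(1-r)| \to 0$ as $r \to 1^-$ since the logarithm grows slower than any negative power, so the condition holds.

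For part (ii), the argument is the same but with the sharper input $\mu([t,1)) = O((1-t)^s)$. Plugging $\mu([t,1)) \lesssim (1-t)^s$ into the summation-by-parts estimate yields $\int_0^1 \frac{d\mu(t)}{1-tr} \lesssim 1 + \int_0^1 \frac{(1-t)^s}{(1-tr)^2}\,dt$, and a routine computation of this last integral (substituting $1-tr$, or comparing with $\int_0^{1-r}(1-r)^{s-2}\,d\rho + \int_{1-r}^1 \rho^{s-2}\,d\rho$ after splitting at $1-t \approx 1-r$) gives $\int_0^1 \frac{d\mu(t)}{1-tr} = O((1-r)^{s-1})$ as $r \to 1^-$ (using $0 < s < 1$, so the exponent $s-1$ is negative and this is the dominant term). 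Hence $v_\gamma(r)|I_\mu(1)(z)| \lesssim (1-r)^{\gamma}(1-r)^{s-1} = (1-r)^{\gamma + s - 1} \to 0$ as $r \to 1^-$ precisely when $\gamma + s - 1 > 0$, i.e. $\gamma > 1-s$, which is the claim.

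The main obstacle is the clean estimation of $\int_0^1 \frac{d\mu(t)}{1-tr}$ from the distribution-function bound on $\mu$; this is where the summation-by-parts (or Fubini) step and the elementary-but-careful integral computation in the $s$-Carleson case live. Everything else --- the pointwise bound $|1-tz| \geq 1 - t|z|$, the reduction to radial behaviour, and the asymptotic comparison of $(1-r)^\alpha$ with $|\log(1-r)|$ --- is routine. One should take some care that the $O$-estimates are uniform down to, say, $r \in [1/2, 1)$ and handle the compact part $r \in [0,1/2]$ trivially since $w$ and the integral are both bounded there.
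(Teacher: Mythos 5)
Your proof is correct, but it takes a different route from the paper. The paper's proof is a one-line reduction: since $I_\mu(1)(z)=\int_0^1\frac{d\mu(t)}{1-tz}=C_\mu(1)(z)$, where $C_\mu$ is the Ces\`aro-type operator associated with $\mu$, both assertions are quoted directly from \cite[Proposition 2]{BBJCesaro}. You instead prove the boundary estimates from scratch: the pointwise reduction $|1-tz|\ge 1-t|z|$, the integration-by-parts (layer-cake) identity $\int_0^1\frac{d\mu(t)}{1-tr}\lesssim \mu([0,1))+r\int_0^1\frac{\mu([t,1))}{(1-tr)^2}\,dt$, and then the insertion of the distribution bound $\mu([t,1))=O((1-t)^s)$, yielding $O(|\log(1-r)|)$ for $s=1$ and $O((1-r)^{s-1})$ for $0<s<1$. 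Both computations check out (in (ii), $(1-t)^s\le(1-tr)^s$ reduces the integral to $\int_0^1(1-tr)^{s-2}\,dt=O((1-r)^{s-1})$ since $s<1$), and the conclusion $w(r)\sup_{|z|=r}|I_\mu(1)(z)|\to 0$ gives membership in $H_w^0$ as required. What your approach buys is self-containedness — it is essentially the same integration-by-parts device the paper itself uses in Lemma \ref{teclem1} via \cite[Lemma 2.5]{Dai} — at the cost of redoing an estimate already available in the cited reference; the paper's approach buys brevity by exploiting the coincidence $I_\mu(1)=C_\mu(1)$.
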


\begin{proof}
	By Theorem \ref{HvH1}(i), the hypothesis implies $I_{\mu}: H_v^{\infty}\rightarrow H(\D)$ is well defined and continuous. As $I_{\mu}(1)=C_{\mu}(1)$, where $C_{\mu}$ is the Cesàro-type operator associated with the Borel measure $\mu,$ \cite[Proposition 2]{BBJCesaro}  yields  the conclusions.
\end{proof}

\begin{proposition}\label{weaklycompact}
	Let $\mu$ be a positive finite Borel measure on $[0,1)$ and $v, w$ be weights on $\D$ such that  $1/v\in L_1([0,1),\mu)$  and $I_\mu: H_{v}^0\to H_{w}^0$ is continuous. Then
	$I_\mu: H_{v}^0\to H_{w}^0$ and	$I_\mu: H_{v}^{\infty}\to H_{w}^{\infty}$  are  compact if and only if $I_\mu(H_v^\infty)\subseteq H_w^0.$
\end{proposition}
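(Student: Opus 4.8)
The plan is to prove both directions around the equivalence. For the nontrivial direction, assume $I_\mu(H_v^\infty)\subseteq H_w^0$ and show compactness. I would first treat the operator $I_\mu:H_v^\infty\to H_w^\infty$ and then deduce compactness of $I_\mu:H_v^0\to H_w^0$ from it. The standard strategy for these weighted sup-norm spaces (the same one used for Cesàro-type operators in \cite{BBJCesaro}) is: a bounded operator $T:H_v^\infty\to H_w^\infty$ whose range lies in $H_w^0$ is automatically compact, because such operators map the closed unit ball $B_v^\infty$, which is $\tau_0$-compact, into a set on which the $\|\cdot\|_w$-topology and $\tau_0$ coincide in a suitable quantitative sense. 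Concretely, I would take a sequence $(f_k)_k\subseteq B_v^\infty$; by normality of $H(\D)$ and Montel, pass to a subsequence converging in $\tau_0$ to some $f\in B_v^\infty$. By Lemma \ref{techlem}, $I_\mu(f_k)\to I_\mu(f)$ in $\tau_0$. The task is then to upgrade this to norm convergence in $H_w^\infty$, i.e. to show $\sup_{|z|=r} w(z)|I_\mu(f_k)(z)-I_\mu(f)(z)|\to 0$ uniformly in $r$.

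The key estimate is a uniform tail bound: because $I_\mu(B_v^\infty)$ consists of functions $g$ with $w(r)\max_{|z|=r}|g(z)|\le \|I_\mu\|\,w(r)\int_0^1\frac{d\mu(t)}{v(t)(1-tr)}$, and since $I_\mu(1)\in H_w^0$ dominates (as in the proof of Proposition \ref{Hv0}, using $\max_{|z|=r}|I_\mu(h)(z)|\le \|h\|_v\cdot\max_{|z|=r}|I_\mu(1)(z)|$ after normalizing — more precisely $\sup_{0\le r<1} w(r)\int_0^1\frac{d\mu(t)}{v(t)(1-tr)}$ corresponds to a fixed element), one gets that $w(r)\max_{|z|=r}|I_\mu(f_k)(z)|$ is uniformly small for $r$ close to $1$, uniformly in $k$. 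Splitting $\{|z|=r\}$ into $r\le r_0$ (where $\tau_0$-convergence handles everything, using that $w$ is bounded on $r\le r_0$) and $r\ge r_0$ (where the uniform tail bound applies), one obtains norm convergence $I_\mu(f_k)\to I_\mu(f)$ in $H_w^\infty$. This proves compactness of $I_\mu:H_v^\infty\to H_w^\infty$. For the $H_v^0\to H_w^0$ case, since $I_\mu:H_v^0\to H_w^0$ is assumed continuous and its bitranspose is $I_\mu:H_v^\infty\to H_w^\infty$ (Proposition \ref{Hv0}), compactness of the latter gives compactness of the former by the Gantmacher/Schauder-type principle that an operator is compact iff its bitranspose is, or directly because $B_v^0\subseteq B_v^\infty$.

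For the converse direction, suppose $I_\mu:H_v^\infty\to H_w^\infty$ is compact; I must show $I_\mu(H_v^\infty)\subseteq H_w^0$. It suffices to show $I_\mu(f)\in H_w^0$ for $f$ in the unit ball. Fix such $f$ and consider $f_r(z)=f(rz)\in H_v^0$ for $r<1$ with $\|f_r\|_v\le\|f\|_v$; since $f_r\to f$ in $\tau_0$ as $r\to 1^-$ and $(f_r)_{r}$ lies in $B_v^\infty$, compactness of $I_\mu$ together with Lemma \ref{techlem} (which identifies the limit) forces $I_\mu(f_r)\to I_\mu(f)$ in $\|\cdot\|_w$. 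But each $I_\mu(f_r)=H_\mu(f_r)\in H_w^0$ (it is in the closure of polynomials, or one checks the vanishing boundary condition directly as $f_r$ extends continuously to $\overline{\D}$), and $H_w^0$ is norm-closed in $H_w^\infty$, so $I_\mu(f)\in H_w^0$.

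The main obstacle is the uniform tail estimate in the first direction: one needs that the family $\{I_\mu(f): \|f\|_v\le 1\}$ is, in the $w$-weighted sup-norm, uniformly small near the boundary $|z|=1$. This is where the hypothesis $I_\mu(H_v^\infty)\subseteq H_w^0$ is used in an essential (not merely pointwise) way — it must be leveraged through the domination $w(r)\int_0^1\frac{d\mu(t)}{v(t)(1-tr)}$ being itself an element of the relevant $o(1)$ class, which is exactly the content of $I_\mu$ mapping into $H_w^0$ combined with the boundedness already in hand. Making this quantitative — i.e. converting "range in $H_w^0$" into "equicontinuity of the image of the unit ball at the boundary" — is the heart of the argument and mirrors \cite[Lemma 1 and Proposition 2]{BBJCesaro}.
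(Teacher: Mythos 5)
Your ``only if'' direction (compactness implies $I_\mu(H_v^\infty)\subseteq H_w^0$, via $f_r\to f$, $I_\mu(f_r)\in H_w^0$ and norm-closedness of $H_w^0$) is sound. The gap is in the converse, and it sits exactly where you locate ``the heart of the argument.'' You propose to upgrade $\tau_0$-convergence of $I_\mu(f_k)$ to $\|\cdot\|_w$-convergence by means of a uniform tail bound, obtained from the estimate $w(r)\max_{|z|=r}|I_\mu(f)(z)|\leq\|f\|_v\,w(r)\int_0^1\frac{d\mu(t)}{v(t)(1-tr)}$ together with the claim that the hypothesis $I_\mu(H_v^\infty)\subseteq H_w^0$ forces $w(r)\int_0^1\frac{d\mu(t)}{v(t)(1-tr)}\to 0$ as $r\to 1^-$. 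That implication is not available here: the limit condition is precisely statement (i) of Theorem \ref{compactHv}, and the paper only recovers it from ``$I_\mu\colon H_v^\infty\to H_w^0$ continuous'' under the \emph{additional} assumption that $v$ is essential (via Lemma \ref{essential}, which produces an actual element $f_v\in H_v^\infty$ dominating $1/v$). For a general weight $v$, as in the present proposition, the function $1/v$ need not be comparable to anything in $B_v^\infty$, so ``every individual image lies in $H_w^0$'' gives only pointwise-in-$f$ decay at the boundary, not the uniform decay over $B_v^\infty$ your Montel argument needs (a bounded subset of a $c_0$-type space is not relatively compact just by being a subset). Indeed, even the finiteness of $\sup_r w(r)\int_0^1\frac{d\mu(t)}{v(t)(1-tr)}$ is not guaranteed under the stated hypotheses (Theorem \ref{Hv} only gives the converse implication for essential $v$). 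So as written your argument proves the proposition only for essential weights.

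The paper closes this gap with soft functional analysis rather than a quantitative estimate: by Proposition \ref{Hv0} the operator $I_\mu\colon H_v^\infty\to H_w^\infty$ is the bitranspose of $I_\mu\colon H_v^0\to H_w^0$, so by Gantmacher's characterization of weak compactness (\cite[17.2.7]{Jarchow}) the condition $I_\mu(H_v^\infty)\subseteq H_w^0$ is equivalent to weak compactness of $I_\mu\colon H_v^0\to H_w^0$; the structural result of Miralles \cite[Corollary 2.5]{Alex} then upgrades weak compactness to compactness for operators between these spaces, and Schauder's theorem transfers compactness back to the bitranspose $I_\mu\colon H_v^\infty\to H_w^\infty$. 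Some ingredient of this kind (weak-to-norm compactness for these particular Banach spaces) is what replaces the uniform boundary estimate you were trying to extract, and it is the missing idea in your proposal.
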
	

\begin{proof}
	By  Proposition \ref{Hv0},  $I_\mu: H_{v}^{\infty}\to H_{w}^{\infty}$ is continuous and the operators $I_{\mu}^{**}$ and $I_\mu$ coincide on $H_{v}^\infty$. Therefore $I_\mu(H_v^\infty)\subseteq H_w^0$ if and only if $I^{**}_{\mu}((H_v^0)^{**})\subseteq H_w^0$. This fact holds if and only if $I_\mu: H_{v}^0\to H_{w}^0$ is weakly compact by
	\cite[17.2.7]{Jarchow}. Now \cite[Corollary 2.5]{Alex} implies that this is equivalent to the compactness of $I_\mu: H_{v}^0\to H_{w}^0$. Finally, Proposition \ref{Hv0} together with Banach-Schauder theorem  yield that $I_\mu: H_{v}^{\infty}\to H_{w}^{\infty}$ is compact if and only if $I_\mu: H_{v}^0\to H_{w}^0$ is compact.
\end{proof}

\begin{theorem}\label{compactHv}
Let $\mu$ be a positive finite Borel measure on $[0,1)$ and let $v,w$ be weights on $\D.$ Assume that $1/v\in L_1([0,1),\mu)$.  Consider the following statements:
\begin{itemize}
\item[(i)] $\lim_{r\to 1^-}w(r)\int_{0}^{1}\frac{d\mu(t)}{v(t)(1-tr)}=0,$
		\item[(ii)]  $I_\mu: H_{v}^\infty\to H_{w}^0$ is  compact.
				\item[(iii)]  $I_\mu: H_{v}^\infty\to H_{w}^\infty$ is  compact and $I_{\mu}(1)\in  H_{w}^0.$
		\item[(iv)] $I_\mu: H_{v}^\infty\to H_{w}^0$ is   continuous.
	\end{itemize}

 Then (i) implies (ii), which imples (iii), and (iii) implies (iv). If  in addition  $v$ is essential, then the four conditions are equivalent.

\end{theorem}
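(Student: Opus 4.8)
The plan is to establish the cycle of implications (i)$\Rightarrow$(ii)$\Rightarrow$(iii)$\Rightarrow$(iv) in general, and then close the loop (iv)$\Rightarrow$(i) under the additional hypothesis that $v$ is essential. For (i)$\Rightarrow$(ii) I would first note that (i) forces $C(v,w)<\infty$, so by Theorem \ref{Hv} the operator $I_\mu:H_v^\infty\to H_w^\infty$ is well defined and continuous, and in fact the estimate in the proof of Theorem \ref{Hv} together with (i) gives $w(r)\max_{|z|=r}|I_\mu(f)(z)|\to 0$ as $r\to1^-$ uniformly for $f\in B_v^\infty$; this simultaneously yields $I_\mu(B_v^\infty)\subseteq H_w^0$ and the compactness, since uniform smallness of $w(z)|I_\mu(f)(z)|$ near the boundary plus normality of $H(\D)$-bounded families (Montel) lets one extract from any sequence in $I_\mu(B_v^\infty)$ a subsequence converging in $\tau_0$ on compacta and hence in $\|\cdot\|_w$. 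Concretely I would use the standard criterion: a set $A\subseteq H_w^\infty$ is relatively compact in $H_w^0$ iff it is bounded and $\sup_{f\in A} w(r)\max_{|z|=r}|f(z)|\to0$ as $r\to1^-$.

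For (ii)$\Rightarrow$(iii): compactness of $I_\mu:H_v^\infty\to H_w^0$ trivially gives compactness of $I_\mu:H_v^\infty\to H_w^\infty$ (composition with the inclusion $H_w^0\hookrightarrow H_w^\infty$), and $I_\mu(1)\in H_w^0$ because $1\in H_v^\infty$ (note $1/v\in L_1$ guarantees $I_\mu$ is defined on all of $H_v^\infty$ by Theorem \ref{HvH1}(i)). For (iii)$\Rightarrow$(iv): from $I_\mu(1)\in H_w^0$ and the pointwise domination $\max_{|z|=r}|I_\mu(z^k)|=\sum_n\mu_{n+k}r^n\le\sum_n\mu_n r^n=\max_{|z|=r}|I_\mu(1)|$ used already in Proposition \ref{Hv0}, every $I_\mu(z^k)\in H_w^0$; since the polynomials are dense in $H_v^0$ and $I_\mu:H_v^\infty\to H_w^\infty$ is continuous, $I_\mu(H_v^0)\subseteq H_w^0$, and then Proposition \ref{Hv0} gives $I_\mu:H_v^0\to H_w^0$ continuous. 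Moreover compactness of $I_\mu:H_v^\infty\to H_w^\infty$ forces $I_\mu(B_v^\infty)$ relatively compact in $H_w^\infty$; I would argue that its closure actually lies in $H_w^0$ (a relatively compact subset of $H_w^\infty$ that is a limit of elements whose $w$-weighted boundary behaviour we control, together with $I_\mu(1)\in H_w^0$ and the domination above extended to all polynomials, then to $H_v^0$, then passing to the bidual via Proposition \ref{Hv0}), whence $I_\mu(H_v^\infty)\subseteq H_w^0$ by Proposition \ref{weaklycompact}; this yields (iv). Alternatively, and more cleanly, once (iii) holds Proposition \ref{weaklycompact} is directly applicable and gives $I_\mu(H_v^\infty)\subseteq H_w^0$, which is exactly (iv).

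Finally, for (iv)$\Rightarrow$(i) assuming $v$ essential: by Lemma \ref{essential} pick $f_v\in H_v^\infty$ with $1/v(t)\le D(v)f_v(t)$ on $[0,1)$. As in inequality (\ref{norm2}) in the proof of Theorem \ref{Hv},
$$
w(r)\int_0^1\frac{d\mu(t)}{v(t)(1-tr)}\le D(v)\,w(r)\max_{|z|=r}|I_\mu(f_v)(z)|.
$$
Since (iv) says $I_\mu(H_v^\infty)\subseteq H_w^0$, in particular $I_\mu(f_v)\in H_w^0$, so the right-hand side tends to $0$ as $r\to1^-$, giving (i). I expect the main obstacle to be the precise justification in (ii)$\Rightarrow$(iii)$\Rightarrow$(iv) that compactness into $H_w^\infty$ plus $I_\mu(1)\in H_w^0$ upgrades to mapping into $H_w^0$ — this is where one must invoke Proposition \ref{weaklycompact} (resting on the Dieudonné–Grothendieck / Kalton-type weak-compactness characterisation and the coincidence $I_\mu^{**}=I_\mu$ of Proposition \ref{Hv0}) rather than argue by hand; everything else is routine bookkeeping with the weighted sup-norm compactness criterion.
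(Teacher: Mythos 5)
Your proposal is correct and follows essentially the same route as the paper: (i)$\Rightarrow$(ii) via the integral estimate showing that $w(r)\max_{|z|=r}|I_\mu(f)(z)|$ decays uniformly over $B_v^\infty$ (the paper then deduces compactness from Proposition \ref{weaklycompact}, whereas you invoke the standard relative-compactness criterion in $H_w^0$ --- both work), (ii)$\Rightarrow$(iii) trivially, (iii)$\Rightarrow$(iv) via Propositions \ref{Hv0} and \ref{weaklycompact}, and (iv)$\Rightarrow$(i) via Lemma \ref{essential} and the inequality (\ref{norm2}). The only remark worth making is that your first, hand-waving pass at (iii)$\Rightarrow$(iv) is superfluous: the ``cleaner'' alternative you give at the end (apply Proposition \ref{weaklycompact} directly once $I_\mu:H_v^0\to H_w^0$ is known to be continuous from Proposition \ref{Hv0}) is exactly the paper's argument and is all that is needed.
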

\begin{proof}
Observe that the hypothesis (i) implies
that $I_\mu:H_v^\infty\to H_w^\infty$ is continuous by Theorem \ref{Hv}. Moreover, for each $f\in B_v^\infty$ and $0<r<1,$
$$w(r)	\max_{|z|=r}|I_\mu(f)(z)|\leq w(r)\int_{0}^{1}\frac{d\mu(t)}{v(t)(1-tr)}.	$$
Hence, $I_\mu(H_v^\infty)\subseteq H_w^0$. The compactness of $I_\mu$ follows by Proposition \ref{weaklycompact}, so we have that (i) implies (ii).   (ii) implies (iii) is trivial and (iii) implies (iv)  by Propositions  \ref{Hv0} and \ref{weaklycompact}.

Now, let  us assume  the weight $v$ is essential. By Lemma \ref{essential} there are $f_v \in H_v^\infty$ and $D(v)>0$ such that $1/v(t) \leq D(v) f_v(t)$ for each $t \in [0,1)$.  If condition  (iv) hold, then (i) follows immediately from $\lim_{r\to 1^-}w(r)|I_{\mu}(f_v)(r)|=0.$
\end{proof}

\begin{corollary}
\label{compact1}
Let $\mu$ be a positive finite Borel measure on $[0,1)$ and let $v,w$ be weights on $\D.$ Assume that $1/v\in L_1([0,1),\mu)$ and  that  $w(r)\leq 1-r$ for every $0<r<1$. Then $I_\mu: H_v^\infty\to H_w^0$ is compact.
\end{corollary}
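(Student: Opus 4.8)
The plan is to deduce this directly from Theorem \ref{compactHv}: since $1/v\in L_1([0,1),\mu)$ is assumed, it suffices to verify condition (i) of that theorem, namely
$$
\lim_{r\to 1^-}w(r)\int_{0}^{1}\frac{d\mu(t)}{v(t)(1-tr)}=0,
$$
and then the implication (i)$\Rightarrow$(ii) gives the compactness of $I_\mu:H_v^\infty\to H_w^0$. Note that no essentiality hypothesis on $v$ is needed here, because only the (always valid) implication (i)$\Rightarrow$(ii) is used.

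To check (i), I would first use the assumption $w(r)\le 1-r$ to bound
$$
w(r)\int_{0}^{1}\frac{d\mu(t)}{v(t)(1-tr)}\ \le\ \int_{0}^{1}\frac{1-r}{1-tr}\cdot\frac{d\mu(t)}{v(t)}.
$$
Then I would observe two elementary facts about the integrand $\frac{1-r}{1-tr}\cdot\frac{1}{v(t)}$ on $[0,1)$: first, since $t\le 1$ we have $1-tr\ge 1-r>0$, hence $\frac{1-r}{1-tr}\le 1$ and the integrand is dominated by $1/v(t)$, which lies in $L_1([0,1),\mu)$ by hypothesis; second, for each fixed $t\in[0,1)$ the denominator satisfies $1-tr\to 1-t>0$ as $r\to 1^-$, so $\frac{1-r}{1-tr}\to 0$ and the integrand tends to $0$ pointwise in $t$.

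Applying the dominated convergence theorem then yields $\int_{0}^{1}\frac{1-r}{1-tr}\cdot\frac{d\mu(t)}{v(t)}\to 0$ as $r\to 1^-$, which establishes (i) and completes the proof. I do not expect any genuine obstacle: the only thing to be careful about is that the dominating function $1/v$ is the one supplied by the standing hypothesis $1/v\in L_1([0,1),\mu)$, and that the pointwise limit and the uniform domination both hold on the half-open interval $[0,1)$ where $\mu$ lives, which is exactly where $1-t>0$.
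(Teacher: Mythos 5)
Your proof is correct and follows essentially the same route as the paper: both verify condition (i) of Theorem \ref{compactHv} by dominating the integrand by $1/v\in L_1([0,1),\mu)$ and applying the dominated convergence theorem, using $w(r)\le 1-r\le 1-tr$ for the uniform bound and the pointwise vanishing of the factor as $r\to 1^-$. The only cosmetic difference is that you first replace $w(r)$ by $1-r$ before applying dominated convergence, while the paper keeps $w(r)$ inside the integrand; the argument is otherwise identical.
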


\begin{proof}
If we define $g_r(t):=\frac{w(r)}{v(t)(1-tr)}$, $t\in[0,1)$, $0<r<1$, we have $|g_r(t)|\leq \frac{1}{v(t)}$ for every $0<r<1$, $0\leq t< 1$ and $\lim_{r\to 1^-}g_r(t)=0$ for all   $0\leq t< 1$. The dominated convergence theorem permits us to conclude that condition (i) in Theorem \ref{compactHv} holds.
\end{proof}

\subsection{Standard weights}

In this section we consider weighted Banach spaces of holomorphic functions determined by  standard weights $\vg(r)=(1-r)^{\gamma},$ $0\leq r<1,\ \gamma>0.$  In Theorems \ref{TeoremaContinuitatGammamenor1} and \ref{TeoremaCompacitatGammamenor1} we get that, for $\gamma>0$ and $\delta\in (-\gamma,1-\gamma),$  the operator $I_{\mu}:H_{\vg}^\infty\rightarrow H_{\vgd}^\infty$ is continuous  (compact) if and only if the Cesàro-type operator $C_{\mu}:H_{\vg}^\infty\rightarrow H_{\vgd}^\infty$ is so (see \cite[Theorems 10 and 11]{BBJCesaro}). First, we need two lemmata.

\begin{lemma}
\label{teclem1}
Let $\mu$ be a positive finite Borel measure on $[0,1)$, $\gamma>0,$ and $\delta\in (-\gamma,1-\gamma).$
The following assertions are equivalent:
\begin{itemize}
\item[(i)] $\sup_{0\leq r<1}(1-r)^{\gamma+\delta}\int_0^1\frac{d\mu(t)}{(1-t)^{\gamma}(1-tr)}<\infty$
\item[(ii)] $\sup_{0\leq r<1}(1-r)^{\gamma+\delta} \int_{0}^{1}\frac{d\mu(t)}{(1-tr)^{\gamma+1}}<\infty$.
\item[(iii)]   $\mu$ is a $(1-\delta)$-Carleson measure.
\end{itemize}
\end{lemma}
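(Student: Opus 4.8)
The strategy is to pass through the key equivalence between the two integral expressions (i) and (ii) and the asymptotic behaviour of $\int_0^1 d\mu(t)/(1-tr)^{\gamma+1}$ as $r\to 1^-$, and then to relate that behaviour to the Carleson condition on $\mu$. First I would prove (i)$\iff$(ii). For the implication (ii)$\Rightarrow$(i), since $1-t\le 1-tr$ for $0\le r,t<1$ is \emph{false} in general, I instead note $1-tr \le 1-t$ is also not what we want; the correct observation is that $\frac{1}{(1-t)^\gamma(1-tr)}$ and $\frac{1}{(1-tr)^{\gamma+1}}$ are comparable on the relevant range. More carefully: when $t\le r$ we have $1-t\ge 1-r$ is the wrong direction too, so the clean approach is the standard one — split the integral at $t=r$. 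On $\{t\le r\}$, $1-tr\ge 1-r\gtrsim 1-t$ up to the split, while on $\{t\ge r\}$, $1-t\le 1-tr$ and $(1-tr)\cong (1-t) + (1-r)$. Using $(1-tr)\cong(1-t)+(1-r)$ throughout (an elementary inequality valid for $r,t\in[0,1)$) one gets $\frac{1}{(1-t)^\gamma(1-tr)} \cong \frac{1}{(1-t)^\gamma((1-t)+(1-r))}$ and $\frac{1}{(1-tr)^{\gamma+1}}\cong \frac{1}{((1-t)+(1-r))^{\gamma+1}}$, and comparing these two integrands after multiplying by $(1-r)^{\gamma+\delta}$ and integrating against $d\mu$ reduces (i)$\iff$(ii) to elementary estimates; for this I would cite or reproduce the comparison $\int_0^1 \frac{d\mu(t)}{(1-t)^\gamma((1-t)+(1-r))} \cong \int_0^1\frac{d\mu(t)}{((1-t)+(1-r))^{\gamma+1}}$, which holds because on $\{1-t\le 1-r\}$ the extra factor $(1-t)^{-\gamma}$ is at least $(1-r)^{-\gamma}$ and at most... — this is the point where one must be a little careful, and it is where I expect the bookkeeping to be heaviest.

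For (ii)$\iff$(iii), I would use the now-standard dictionary between $s$-Carleson measures and the growth of such integrals. Writing $\nu=\mu$, the quantity $\int_0^1 \frac{d\mu(t)}{(1-tr)^{\gamma+1}}$ is, up to constants, $\int_0^1 \frac{d\mu(t)}{((1-t)+(1-r))^{\gamma+1}}$, and a routine distribution-function / layer-cake computation (integrating $\mu([t,1))$ against the derivative of $((1-t)+(1-r))^{-\gamma}$, or dyadically decomposing $[0,1)$ according to $(1-t)\cong 2^{-j}$) shows that $\sup_{0\le r<1}(1-r)^{\gamma+\delta}\int_0^1\frac{d\mu(t)}{((1-t)+(1-r))^{\gamma+1}}<\infty$ if and only if $\mu([t,1)) = O((1-t)^{1-\delta})$, i.e. $\mu$ is a $(1-\delta)$-Carleson measure. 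The hypothesis $\delta\in(-\gamma,1-\gamma)$ guarantees $1-\delta\in(\gamma,1+\gamma)$, in particular $1-\delta>0$ so the Carleson exponent is admissible and $1-\delta<1+\gamma$ so the integral $\int_0^1 ((1-t)+(1-r))^{-(\gamma+1)}d\mu(t)$ has the right order of divergence to make the supremum finite rather than trivially infinite or zero — this is exactly why the range of $\delta$ is imposed, and I would point that out explicitly.

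The main obstacle I anticipate is not conceptual but the careful handling of the comparison constants in (i)$\iff$(ii): one must verify that replacing $(1-t)^\gamma(1-tr)$ by $((1-t)+(1-r))^{\gamma+1}$ costs only a multiplicative constant \emph{uniformly in $r$}, and this uses $\gamma>0$ in an essential way (for $t$ near $1$ the factor $(1-t)^{-\gamma}$ dominates $(1-r)^{-\gamma}$, for $t$ near $0$ both integrands are bounded). I would organize the whole proof as: (a) record the elementary fact $1-tr\cong (1-t)+(1-r)$; (b) deduce (i)$\iff$(ii) from it; (c) prove (ii)$\iff$(iii) by the layer-cake/dyadic argument, noting where $\delta\in(-\gamma,1-\gamma)$ is used. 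Steps (a) and (b) are short; step (c) is the substantive one, and it is essentially the computation already implicit in \cite[Lemma~2]{GGM} adapted to exponent $\gamma+1$ instead of $2$, so I would lean on that reference.
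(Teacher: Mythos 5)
Your implication (i)$\Rightarrow$(ii) and your equivalence (ii)$\Leftrightarrow$(iii) are essentially fine: the first is a one-line pointwise estimate, because $1-tr\ge 1-t$ for all $r,t\in[0,1)$ (contrary to your aside, $1-t\le 1-tr$ \emph{does} always hold, since $tr\le t$), whence $\frac{1}{(1-tr)^{\gamma+1}}\le\frac{1}{(1-t)^{\gamma}(1-tr)}$; the second is exactly \cite[Theorem 10]{BBJCesaro}, which the paper simply cites, and your layer-cake sketch is a legitimate way to reprove it. The genuine gap is in the remaining implication, which you propose to extract from the claimed uniform comparison
\[
\int_0^1 \frac{d\mu(t)}{(1-t)^\gamma\bigl((1-t)+(1-r)\bigr)}\;\cong\;\int_0^1\frac{d\mu(t)}{\bigl((1-t)+(1-r)\bigr)^{\gamma+1}}.
\]
This comparison is false for a general finite positive Borel measure. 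Take $d\mu(t)=(1-t)^{\gamma-1}\,dt$, which is finite because $\gamma>0$: the left-hand side equals $\int_0^1\frac{dt}{(1-t)(1-tr)}=+\infty$ for every $r\in[0,1)$, while the right-hand side is $\cong(1-r)^{-1}$, finite for each $r<1$. The failure is concentrated on $\{t>r\}$, where $(1-t)^{-\gamma}$ can exceed $((1-t)+(1-r))^{-\gamma}\cong(1-r)^{-\gamma}$ by an unbounded factor, so no manipulation of the integrands alone will produce (ii)$\Rightarrow$(i); that implication genuinely has to pass through the Carleson condition.

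The repair is to prove (iii)$\Rightarrow$(i) directly, which is what the paper does: integrate by parts to write $\int_0^1\frac{d\mu(t)}{(1-t)^\gamma(1-tr)}=\mu([0,1))+\int_0^1\bigl(\tfrac{1}{(1-t)^{\gamma}(1-tr)}\bigr)'\,\mu([t,1))\,dt$, insert the hypothesis $\mu([t,1))\le C(1-t)^{1-\delta}$, and reduce to the elementary estimate $\int_0^1\frac{dt}{(1-tr)(1-t)^{\gamma+\delta}}\lesssim(1-r)^{-(\gamma+\delta)}$ (the paper quotes \cite[Lemma 2.5]{Dai} for this), which is exactly where the restriction $\gamma+\delta\in(0,1)$, i.e.\ $\delta\in(-\gamma,1-\gamma)$, is consumed. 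So the correct architecture is (i)$\Rightarrow$(ii) pointwise, (ii)$\Leftrightarrow$(iii) by the known characterization, and (iii)$\Rightarrow$(i) by the layer-cake argument \emph{using} the Carleson bound; your step (b) as written cannot stand on its own.
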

\begin{proof}
(ii)  is equivalent to (iii) by \cite[Theorem 10]{BBJCesaro}.  Since $\frac{1}{1-tr}\leq \frac{1}{1-t}$ for every $r,t\in [0,1)$, we get immediately that (i) implies (ii). We show that (iii) implies (i).  By hypothesis, there exists $C>0$ such that $\mu[t,1)\leq C(1-t)^{1-\delta}.$ Thus, using integration by parts and \cite[Lemma 2.5]{Dai}, we get
\begin{eqnarray*}
	\int_0^1\frac{d\mu(t)}{(1-t)^{\gamma}(1-tr)}&=&\mu([0,1))+\int_0^1 \left(\frac{1}{(1-t)^{\gamma}(1-tr)}\right)'\mu([t,1))dt\\
	&\leq& \mu([0,1))+C\int_0^1 \frac{\gamma(1-tr)+r(1-t)}{(1-tr)^{2}(1-t)^{\gamma+\delta}}dt\\ 	
	&\leq& \mu([0,1))+C(\gamma+r)\int_0^1 \frac{dt}{(1-tr)(1-t)^{\gamma+\delta}}\\ 	
	&\leq& \mu([0,1))+C'(\gamma+r)(1-r)^{-(\gamma +\delta)}.
	\end{eqnarray*}
\end{proof}

The proof of the next Lemma is completely analogous of that of Lemma \ref{teclem1}, using \cite[Theorem 11]{BBJCesaro}.
\begin{lemma}
\label{teclem2}
Let $\mu$ be a positive finite Borel measure on $[0,1)$, $\gamma>0,$ and $\delta\in (-\gamma,1-\gamma).$
The following assertions are equivalent:
\begin{itemize}
\item[(i)] $\lim_{r\to 1^-}(1-r)^{\gamma+\delta}\int_0^1\frac{d\mu(t)}{(1-t)^{\gamma}(1-tr)}=0.$
\item[(ii)] $\lim_{r\to 1^-}(1-r)^{\gamma+\delta} \int_{0}^{1}\frac{d\mu(t)}{(1-tr)^{\gamma+1}}=0$.
\item[(iii)]   $\mu$ is a vanishing $(1-\delta)$-Carleson measure.
\end{itemize}
\end{lemma}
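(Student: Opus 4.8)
The plan is to mimic exactly the proof of Lemma \ref{teclem1}, replacing the boundedness condition (the finiteness of a supremum) by the corresponding vanishing condition (the limit as $r\to 1^-$ being zero), and replacing the reference to \cite[Theorem 10]{BBJCesaro} by \cite[Theorem 11]{BBJCesaro}, which is the "little-oh" counterpart characterizing vanishing $(1-\delta)$-Carleson measures. So the skeleton of the argument has three implications: (ii)$\Leftrightarrow$(iii) is quoted directly from \cite[Theorem 11]{BBJCesaro}; (i)$\Rightarrow$(ii) is immediate from the pointwise inequality $\frac{1}{1-tr}\le\frac{1}{1-t}$ for $r,t\in[0,1)$, which forces
$$(1-r)^{\gamma+\delta}\int_0^1\frac{d\mu(t)}{(1-tr)^{\gamma+1}}\le (1-r)^{\gamma+\delta}\int_0^1\frac{d\mu(t)}{(1-t)^{\gamma}(1-tr)},$$
so if the right-hand side tends to $0$ so does the left; and the substantive implication is (iii)$\Rightarrow$(i).

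For (iii)$\Rightarrow$(i), I would run the same integration-by-parts estimate as in Lemma \ref{teclem1}, but now tracking the smallness that the vanishing hypothesis provides. Given $\eps>0$, the hypothesis that $\mu$ is a vanishing $(1-\delta)$-Carleson measure yields some $t_0\in(0,1)$ with $\mu([t,1))\le \eps(1-t)^{1-\delta}$ for $t\ge t_0$; on $[0,t_0]$ the measure of tails is merely bounded by $\mu([0,1))$. I would split the integral $\int_0^1\frac{d\mu(t)}{(1-t)^\gamma(1-tr)}$ accordingly — or, more cleanly, split the integration-by-parts boundary-plus-integral expression at $t_0$. On $[0,t_0]$ all quantities $\frac{1}{(1-t)^\gamma(1-tr)}$ and their $t$-derivatives stay bounded by a constant depending only on $t_0,\gamma$ (uniformly in $r$), so that contribution, after multiplication by $(1-r)^{\gamma+\delta}$, tends to $0$ as $r\to 1^-$. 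On $[t_0,1)$ one uses $\mu([t,1))\le\eps(1-t)^{1-\delta}$ and exactly the chain of estimates from Lemma \ref{teclem1} (the derivative bound $\big(\tfrac{1}{(1-t)^\gamma(1-tr)}\big)' \le \tfrac{\gamma(1-tr)+r(1-t)}{(1-tr)^2(1-t)^{\gamma+1}}$, then $\le (\gamma+r)\tfrac{1}{(1-tr)(1-t)^{\gamma+\delta}}$, then \cite[Lemma 2.5]{Dai} to get $\int_{t_0}^1\frac{dt}{(1-tr)(1-t)^{\gamma+\delta}}\le C'(1-r)^{-(\gamma+\delta)}$) to conclude that $(1-r)^{\gamma+\delta}$ times the $[t_0,1)$-part is $\le C''\eps$ uniformly in $r$. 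Letting $r\to1^-$ first and then $\eps\to0$ gives (i).

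Since the paper states "The proof of the next Lemma is completely analogous to that of Lemma \ref{teclem1}, using \cite[Theorem 11]{BBJCesaro}," the only honest thing to write is a brief pointer. I would therefore present the proof as: invoke \cite[Theorem 11]{BBJCesaro} for (ii)$\Leftrightarrow$(iii); note (i)$\Rightarrow$(ii) from $\tfrac{1}{1-tr}\le\tfrac1{1-t}$; and for (iii)$\Rightarrow$(i) reproduce the integration-by-parts estimate of Lemma \ref{teclem1} with the extra bookkeeping of an $\eps$–$t_0$ splitting to convert the $O$-bound into an $o$-bound.

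The only genuine obstacle is making the $\eps$–$t_0$ splitting clean: one must be sure that the "near-origin" piece $\int_0^{t_0}$, once multiplied by $(1-r)^{\gamma+\delta}$, really does go to $0$ (it does, because $\gamma+\delta>0$ and the integrand is bounded uniformly in $r$ on $[0,t_0]$), and that in the "near-boundary" piece the constant multiplying $\eps$ does not secretly depend on $t_0$ in a way that blows up — it does not, since \cite[Lemma 2.5]{Dai} gives a bound with constant depending only on $\gamma+\delta$. Everything else is verbatim from Lemma \ref{teclem1}.

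\begin{proof}
The equivalence of (ii) and (iii) is \cite[Theorem 11]{BBJCesaro}. Since $\frac{1}{1-tr}\leq \frac{1}{1-t}$ for every $r,t\in[0,1)$, we have
$$(1-r)^{\gamma+\delta}\int_{0}^{1}\frac{d\mu(t)}{(1-tr)^{\gamma+1}}\leq (1-r)^{\gamma+\delta}\int_{0}^{1}\frac{d\mu(t)}{(1-t)^{\gamma}(1-tr)},$$
so (i) implies (ii).

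It remains to prove that (iii) implies (i). Let $\eps>0$. By hypothesis there is $t_0\in(0,1)$ such that $\mu([t,1))\leq \eps (1-t)^{1-\delta}$ for every $t\in[t_0,1)$, while $\mu([t,1))\leq \mu([0,1))$ for every $t\in[0,1)$. Fix $0\leq r<1$. Using integration by parts,
$$\int_0^1\frac{d\mu(t)}{(1-t)^{\gamma}(1-tr)}=\mu([0,1))+\int_0^1 \left(\frac{1}{(1-t)^{\gamma}(1-tr)}\right)'\mu([t,1))\,dt.$$
We split the last integral at $t_0$. On $[0,t_0]$ the function $t\mapsto \left(\frac{1}{(1-t)^{\gamma}(1-tr)}\right)'$ is bounded, uniformly in $r\in[0,1)$, by a constant $K_{t_0}$ depending only on $t_0$ and $\gamma$, so
$$\int_0^{t_0}\left(\frac{1}{(1-t)^{\gamma}(1-tr)}\right)'\mu([t,1))\,dt\leq K_{t_0}\,\mu([0,1)).$$
On $[t_0,1)$, arguing as in Lemma \ref{teclem1} and using \cite[Lemma 2.5]{Dai},
\begin{eqnarray*}
\int_{t_0}^1\left(\frac{1}{(1-t)^{\gamma}(1-tr)}\right)'\mu([t,1))\,dt
&\leq& \eps\int_{t_0}^1 \frac{\gamma(1-tr)+r(1-t)}{(1-tr)^{2}(1-t)^{\gamma+\delta}}\,dt\\
&\leq& \eps(\gamma+r)\int_{t_0}^1 \frac{dt}{(1-tr)(1-t)^{\gamma+\delta}}\\
&\leq& \eps\, C'(\gamma+1)(1-r)^{-(\gamma+\delta)},
\end{eqnarray*}
where $C'$ depends only on $\gamma+\delta$. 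Multiplying by $(1-r)^{\gamma+\delta}$ we obtain
$$(1-r)^{\gamma+\delta}\int_0^1\frac{d\mu(t)}{(1-t)^{\gamma}(1-tr)}\leq (1-r)^{\gamma+\delta}\big(\mu([0,1))+K_{t_0}\mu([0,1))\big)+\eps\, C'(\gamma+1).$$
Since $\gamma+\delta>0$, the first summand tends to $0$ as $r\to 1^-$, hence
$$\limsup_{r\to1^-}(1-r)^{\gamma+\delta}\int_0^1\frac{d\mu(t)}{(1-t)^{\gamma}(1-tr)}\leq \eps\, C'(\gamma+1).$$
As $\eps>0$ was arbitrary, (i) follows.
\end{proof}
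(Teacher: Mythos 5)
Your proof is correct and is precisely the argument the paper intends: it reruns the integration-by-parts estimate of Lemma \ref{teclem1} with an $\eps$--$t_0$ splitting to upgrade the $O$-bound to an $o$-bound, quoting \cite[Theorem 11]{BBJCesaro} for (ii)$\Leftrightarrow$(iii), which is exactly what the paper means by ``completely analogous.'' No gaps.
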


\begin{theorem}\label{TeoremaContinuitatGammamenor1}
	Let $\mu$ be a positive finite Borel measure on $[0,1)$, $\gamma>0,$ and $\delta\in (-\gamma,1-\gamma).$ The following are equivalent:
	\begin{itemize}
		\item[(i)]  $I_{\mu}:H_{\vg}^\infty\rightarrow H_{\vgd}^\infty$ is  continuous.
		
		\item[(ii)]  $I_{\mu}:H_{\vg}^0\rightarrow H_{\vgd}^0$ is  continuous and $1/\vg\in L_1([0,1),\mu)$.

		\item[(iii)]   $\mu$ is a $(1-\delta)$-Carleson measure.
		
		\item[(iv)]  $\mu_n=O(\frac{1}{n^{1-\delta}})$.
			
	\end{itemize}
\end{theorem}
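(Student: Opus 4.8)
The plan is to establish the cycle $(i)\Rightarrow(ii)\Rightarrow(iii)\Rightarrow(iv)\Rightarrow(i)$, or more efficiently to connect everything to the Carleson condition via the technical lemmata already proved. First I would record the standing observation that $\vg=v_\gamma$ is an essential weight (stated in the introduction), so that Theorems \ref{HvH1}(i), \ref{Hv}, and \ref{compactHv} apply with $v=\vg$ and full equivalences are available.

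For $(iii)\Rightarrow(i)$: assume $\mu$ is a $(1-\delta)$-Carleson measure. By Lemma \ref{teclem1}, condition (i) of that lemma holds, i.e. $C(\vg,\vgd)=\sup_{0\le r<1}(1-r)^{\gamma+\delta}\int_0^1 d\mu(t)/((1-t)^\gamma(1-tr))<\infty$; note $\gamma+\delta=\gamma-(1-(1-\delta))$... more simply $w=\vgd$ has $w(r)=(1-r)^{\gamma+\delta}$ and $1/v(t)=(1-t)^{-\gamma}$, so this is exactly statement (i) of Theorem \ref{Hv}. Hence Theorem \ref{Hv} gives that $I_\mu:H_{\vg}^\infty\to H_{\vgd}^\infty$ is well defined and continuous, which is (i). For $(i)\Rightarrow(iii)$: since $\vg$ is essential, the converse half of Theorem \ref{Hv} applies and gives back condition (i) of Theorem \ref{Hv}, which by Lemma \ref{teclem1} is equivalent to $\mu$ being a $(1-\delta)$-Carleson measure; this is (iii). (One first needs $1/\vg\in L_1([0,1),\mu)$ to invoke Theorem \ref{Hv}'s machinery, but this follows by taking $r=0$ in the supremum, exactly as noted in the proof of Theorem \ref{Hv}.)

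For the equivalence $(iii)\Leftrightarrow(iv)$: by \cite[Lemma 2]{GGM} (quoted in the introduction) $\mu$ is a Carleson measure iff $\mu_n=O(1/n)$; the $s$-Carleson version $\mu([t,1))=O((1-t)^s)$ being equivalent to $\mu_n=O(1/n^s)$ is the corresponding standard moment characterization, which I would either cite from \cite{GGM}/\cite{Dai} or derive quickly by the integration-by-parts estimate already used in the proof of Lemma \ref{teclem1}. Taking $s=1-\delta$ gives $(iii)\Leftrightarrow(iv)$.

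For the link with $(ii)$: $(i)\Rightarrow(ii)$ follows from Proposition \ref{Hv0} once we know $1/\vg\in L_1([0,1),\mu)$ (which holds under (i) as above, equivalently under (iii)) and $I_\mu(1)\in H_{\vgd}^0$; the latter is supplied by Proposition \ref{exemples C_mu(1)}, since a $(1-\delta)$-Carleson measure with $1-\delta>0$ and $\gamma+\delta>0$ falls into case (i) (if $\delta\ge 0$, so $\mu$ is Carleson and $\vgd$ is standard) or case (ii) (if $\delta<0$, so $\mu$ is $s$-Carleson with $s=1-\delta<1$ and $\gamma+\delta=\gamma-(1-s)>0$, i.e. $\gamma+\delta>1-s$). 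Conversely $(ii)\Rightarrow(i)$ is the trivial implication in Proposition \ref{Hv0}.

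The main obstacle I anticipate is purely bookkeeping: making sure the exponent translations are consistent — that $\delta\in(-\gamma,1-\gamma)$ forces $1-\delta\in(\gamma,1+\gamma)\subseteq(0,\infty)$ and $\gamma+\delta\in(0,1)$, so that all the Carleson exponents are positive and Proposition \ref{exemples C_mu(1)} genuinely applies in the right case — and verifying the $1/\vg\in L_1([0,1),\mu)$ hypothesis wherever a proposition requires it. No new analytic idea is needed beyond what Theorem \ref{Hv}, Theorem \ref{compactHv}, Lemma \ref{teclem1}, Proposition \ref{Hv0} and Proposition \ref{exemples C_mu(1)} already provide; the theorem is essentially the specialization of those general results to standard weights, repackaged through the moment characterization of Carleson measures.
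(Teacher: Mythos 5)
Your overall route is exactly the paper's: (i)$\Leftrightarrow$(iii) via Theorem \ref{Hv} and Lemma \ref{teclem1} (using that $\vg$ is essential), the moment characterization of $s$-Carleson measures for (iii)$\Leftrightarrow$(iv), and Proposition \ref{Hv0} plus Proposition \ref{exemples C_mu(1)} to bring in (ii). However, your case analysis for showing $I_\mu(1)\in H_{\vgd}^0$ is inverted, and both of your parenthetical justifications are false as written. If $\delta\geq 0$ then $1-\delta\leq 1$, and a $(1-\delta)$-Carleson measure with exponent at most $1$ need \emph{not} be a Carleson measure: the condition $\mu([t,1))=O((1-t)^{1-\delta})$ is \emph{weaker} than $\mu([t,1))=O(1-t)$ when $1-\delta<1$, so Proposition \ref{exemples C_mu(1)}(i) does not apply on that branch. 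Symmetrically, if $\delta<0$ then $s=1-\delta>1$, not $s<1$, so your invocation of Proposition \ref{exemples C_mu(1)}(ii), whose hypothesis is $0<s<1$, is out of range on that branch.

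The correct split is the opposite one. For $-\gamma<\delta\leq 0$ one has $1-\delta\geq 1$, hence $\mu$ is a Carleson measure, and $\vgd$ is a standard weight since $\gamma+\delta>0$, so Proposition \ref{exemples C_mu(1)}(i) yields $I_\mu(1)\in H_{\vgd}^0$. For $0<\delta<1-\gamma$ one has $s=1-\delta\in(\gamma,1)\subseteq(0,1)$ and $\gamma+\delta>\delta=1-s$, so Proposition \ref{exemples C_mu(1)}(ii) applies with $\gamma+\delta$ in place of $\gamma$. With this swap the argument closes exactly as in the paper. Everything else in your proposal --- the reduction of (i)$\Leftrightarrow$(iii) to Lemma \ref{teclem1} and the essentiality of $\vg$, the verification of $1/\vg\in L_1([0,1),\mu)$ by taking $r=0$, and (ii)$\Rightarrow$(i) from Proposition \ref{Hv0} --- is correct and matches the paper's proof.
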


\begin{proof}
The equivalence of (i) and (iii) follows from Theorem \ref{Hv} and Lemma \ref{teclem1}. The equivalence between (iii) and (iv) is stated in \cite[Proposition 1]{CGP} without proof and a proof can be found in \cite[Corollary 4.6]{Blasco}. To see that these three equivalent conditions are equivalent to (ii), by Proposition \ref{Hv0} we only need to show  that $I_\mu(1)\in H_{\vgd}^0$ when (i) holds. By the equivalence between conditions (i) and (iii)  and by Theorem \ref{HvH1}(i),   $I_\mu(1)\in H_{\vgd}^0$ is satisfied for every $\gamma>0$   by Proposition \ref{exemples C_mu(1)}(i) when $-\gamma <\delta\leq 0$ and   by Proposition \ref{exemples C_mu(1)}(ii) when $\delta\in (0,1-\gamma)$ (just take $s=1-\delta$ and consider $H_{\vgd}^0$ instead of $H_{\vg}^0$ in the proposition).
\end{proof}

\begin{theorem}\label{TeoremaCompacitatGammamenor1}
	Let $\mu$ be a positive finite Borel measure on $[0,1)$, $\gamma>0,$ and $\delta\in (-\gamma,1-\gamma).$ The following are equivalent:
	\begin{itemize}
	
		\item[(i)]  $I_{\mu}:H_{\vg}^\infty\rightarrow H_{\vgd}^\infty$ is  compact.
			\item[(ii)]   $I_{\mu}:H_{\vg}^\infty\rightarrow H_{\vgd}^0$ is  compact.
		\item[(iii)]  $I_{\mu}:H_{\vg}^0\rightarrow H_{\vgd}^0$ is  compact and  $1/\vg\in L_1([0,1),\mu)$.
				\item[(iv)] $\mu$ is a  vanishing $(1-\delta)$-Carleson measure.
		\item[(v)]     $\mu_n=o(\frac{1}{n^{1-\delta}})$.

	\end{itemize}
	\end{theorem}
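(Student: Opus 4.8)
The plan is to run the proof of Theorem \ref{TeoremaContinuitatGammamenor1} again, replacing every continuity tool by its compactness counterpart. Put $v=\vg$ and $w=\vgd$; since $\delta\in(-\gamma,1-\gamma)$ we have $\gamma+\delta\in(0,1)$, so $w$ is a genuine weight, and $\vg$ is essential.

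First I would check that each of (i)--(v) forces the integrability $1/\vg\in L_1([0,1),\mu)$, which is the standing hypothesis of Theorem \ref{compactHv} and of Propositions \ref{Hv0} and \ref{weaklycompact}. Conditions (i) and (ii) make $I_\mu\colon H_{\vg}^\infty\to H(\D)$ well defined, so Theorem \ref{HvH1}(i), applicable since $\vg$ is essential, yields it; (iii) states it explicitly; and under (iv) or (v) one has $\mu_n=o(n^{-(1-\delta)})$, so by monotone convergence $\int_0^1(1-t)^{-\gamma}\,d\mu(t)=\sum_{n\ge0}c_n\mu_n$ with $c_n$ the positive Taylor coefficients of $(1-t)^{-\gamma}$, and since $c_n\cong n^{\gamma-1}$ the series is $\sum_n o(n^{\gamma+\delta-2})$, which is summable because $\gamma+\delta<1$. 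So from now on one may assume $1/\vg\in L_1([0,1),\mu)$.

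Now apply Theorem \ref{compactHv} with $v=\vg$, $w=\vgd$: since $\vg$ is essential its four conditions are equivalent, and by Lemma \ref{teclem2} its condition (i) is precisely our (iv). Condition (ii) of Theorem \ref{compactHv} is our (ii), so (iv)$\Leftrightarrow$(ii). For our (i): compactness of $I_\mu\colon H_{\vg}^\infty\to H_{\vgd}^\infty$ gives continuity, hence by Theorem \ref{TeoremaContinuitatGammamenor1} the measure $\mu$ is $(1-\delta)$-Carleson and, arguing exactly as in its proof (through the identity $I_\mu(1)=C_\mu(1)$ and Proposition \ref{exemples C_mu(1)}), $I_\mu(1)\in H_{\vgd}^0$; this upgrades our (i) to condition (iii) of Theorem \ref{compactHv}, while that condition returns our (i) for free. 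For our (iii): compactness of $I_\mu\colon H_{\vg}^0\to H_{\vgd}^0$ gives continuity, so by Proposition \ref{Hv0} the operator $I_\mu\colon H_{\vg}^\infty\to H_{\vgd}^\infty$ is continuous with $I_\mu(1)\in H_{\vgd}^0$, and then Proposition \ref{weaklycompact} makes ``$I_\mu\colon H_{\vg}^0\to H_{\vgd}^0$ compact'', ``$I_\mu\colon H_{\vg}^\infty\to H_{\vgd}^\infty$ compact'' and ``$I_\mu(H_{\vg}^\infty)\subseteq H_{\vgd}^0$'' equivalent; together with $I_\mu(1)\in H_{\vgd}^0$ this is condition (iii) of Theorem \ref{compactHv}, and conversely that condition gives back our (iii) by reading Propositions \ref{Hv0} and \ref{weaklycompact} in reverse. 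Finally, (iv)$\Leftrightarrow$(v) is the ``little $o$'' analogue of the equivalence (iii)$\Leftrightarrow$(iv) in Theorem \ref{TeoremaContinuitatGammamenor1}; it follows from \cite[Corollary 4.6]{Blasco} (and reduces to \cite[Lemma 2]{GGM} when $\delta=0$).

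The only delicate point is that our (i) carries no information about $I_\mu(1)$, so it does not lie directly in the cycle produced by Theorem \ref{compactHv}; the fix is to reuse the continuity step from Theorem \ref{TeoremaContinuitatGammamenor1}, which via Proposition \ref{exemples C_mu(1)} supplies $I_\mu(1)\in H_{\vgd}^0$ as soon as $I_\mu$ is merely continuous. The other thing to get right is the bookkeeping of the first step, which guarantees the integrability hypothesis needed to invoke Theorem \ref{compactHv} and Propositions \ref{Hv0} and \ref{weaklycompact}; everything else is a routine chain of implications.
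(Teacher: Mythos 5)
Your proof is correct and follows essentially the same route as the paper's: it chains the conditions through Theorem \ref{compactHv} (whose four statements are equivalent since $\vg$ is essential), Lemma \ref{teclem2}, Propositions \ref{Hv0} and \ref{weaklycompact}, and the continuity Theorem \ref{TeoremaContinuitatGammamenor1} to supply $I_{\mu}(1)\in H_{\vgd}^0$. Your explicit verification that each of (i)--(v) forces $1/\vg\in L_1([0,1),\mu)$ is a welcome piece of bookkeeping that the paper's terse proof leaves implicit.
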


\begin{proof}
 (i) is equivalent to (ii) by Theorem  \ref{TeoremaContinuitatGammamenor1} and Proposition \ref{weaklycompact}. (ii) and (iii) are equivalent by Theorem \ref{HvH1}(i) and Proposition \ref{weaklycompact}. The equivalence with (iv) follows by Theorem \ref{compactHv} and Lemma \ref{teclem2}, and (iv)  is equivalent to (v)  by \cite[Theorem 11]{BBJCesaro}.
\end{proof}

In the case  $\delta\geq 1-\gamma$, $\gamma>0,$ we prove that  the continuity and compactness of the operator    $I_{\mu}:H_{\vg}^\infty\rightarrow H_{\vgd}^\infty$
are equivalent  to the well definition of $I_{\mu}:H_{\vg}^\infty\rightarrow H(\D)$.

\begin{theorem}\label{TeoremaContinuitatGammamajor1}
	Let $\mu$ be a positive finite Borel measure on $[0,1)$, $\gamma>0$.  The following are equivalent:
	\begin{itemize}
		\item[(i)]   $I_{\mu}:H_{\vg}^\infty\rightarrow H(\D)$ is  well defined (even continuous).
				\item[(ii)] $\int_{0}^{1}\frac{d\mu(t)}{(1-t)^{\gamma}}<\infty.$
		\item[(iii)] $\sum_{n=0}^{\infty}\mu_nn^{\gamma-1}<\infty.$
				\item[(iv)]  $I_{\mu}:H_{\vg}^\infty\rightarrow H_{v_1}^\infty$ is a compact operator, i.e.  $I_{\mu}:H_{\vg}^\infty\rightarrow H_{\vgd}^\infty$ is so  for every $\gamma+\delta \geq 1$.	
	 	\item[(v)]  $I_{\mu}:H_{\vg}^\infty\rightarrow H_{v_1}^\infty$ is well defined, i.e.   $I_{\mu}:H_{\vg}^\infty\rightarrow H_{\vgd}^\infty$ is so  for every $\gamma+\delta \geq 1$.	
				\end{itemize}
				In particular,  $I_{\mu}:H_{v_1}^\infty\rightarrow H_{v_{1}}^\infty$ is well defined if  and only if it is compact if and only if $\sum_n\mu_n<\infty.$
 If (i)-(v) are satisfied, then $I_{\mu}:H_{\vg}^0\rightarrow H_{v_1}^0$ is a compact operator, i.e.  $I_{\mu}:H_{\vg}^0\rightarrow H_{\vgd}^0$ is so  for every $\gamma+\delta \geq 1$, and $I_{\mu}(H_{\vg}^\infty)\subseteq H_{v_1}^0$ for every $\gamma>0.$
\end{theorem}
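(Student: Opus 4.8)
The plan is to run the argument through the Carleson-type moment characterizations and the essentiality of the standard weight $\vg$, establishing the cycle (ii)$\Leftrightarrow$(iii), (ii)$\Rightarrow$(iv)$\Rightarrow$(v)$\Rightarrow$(i)$\Rightarrow$(ii), and then reading off the ``in particular'' part. First I would observe that (ii)$\Leftrightarrow$(iii) is a standard comparison between the integral $\int_0^1 (1-t)^{-\gamma}\,d\mu(t)$ and the series $\sum_n \mu_n n^{\gamma-1}$: writing $\frac{1}{(1-t)^\gamma}=\sum_{n\ge 0}\binom{n+\gamma-1}{n}t^n$ and integrating term by term against $\mu$ (monotone convergence, all terms nonnegative) gives $\int_0^1(1-t)^{-\gamma}d\mu(t)=\sum_n\binom{n+\gamma-1}{n}\mu_n$, and $\binom{n+\gamma-1}{n}\cong n^{\gamma-1}$; this can also be quoted from \cite[Corollary 4.6]{Blasco}.

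Next, (ii)$\Rightarrow$(iv): condition (ii) says exactly $1/v_\gamma \in L_1([0,1),\mu)$, so Theorem \ref{HvH1}(i) gives that $I_\mu:H_{\vg}^\infty\to H(\D)$ is well defined and continuous; then Corollary \ref{compact1} applies with $w=v_1$, since $v_1(r)=1-r$ satisfies $w(r)\le 1-r$, yielding that $I_\mu:H_{\vg}^\infty\to H_{v_1}^0$ is compact, hence a fortiori $I_\mu:H_{\vg}^\infty\to H_{v_1}^\infty$ is compact; and for $\gamma+\delta\ge 1$ we have $v_{\gamma+\delta}(r)=(1-r)^{\gamma+\delta}\le (1-r)=v_1(r)$ for $r$ near $1$ (up to a constant on all of $[0,1)$), so $H_{v_1}^\infty\hookrightarrow H_{\vgd}^\infty$ continuously and compactness transfers to $I_\mu:H_{\vg}^\infty\to H_{\vgd}^\infty$. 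This simultaneously proves the final displayed ``$I_\mu(H_{\vg}^\infty)\subseteq H_{v_1}^0$'' claim and, combined with Proposition \ref{weaklycompact} and Proposition \ref{Hv0} (noting $I_\mu(1)=C_\mu(1)\in H_{v_1}^0$ by Proposition \ref{exemples C_mu(1)}(i), as $\mu$ is then a Carleson measure since $\sum\mu_n n^{\gamma-1}<\infty$ forces $\mu_n=O(1/n)$ when $\gamma\le 1$, and directly when $\gamma>1$), the compactness of $I_\mu:H_{\vg}^0\to H_{v_1}^0$ and of $I_\mu:H_{\vg}^0\to H_{\vgd}^0$ for $\gamma+\delta\ge 1$.

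The implications (iv)$\Rightarrow$(v) and (v)$\Rightarrow$(i) are immediate: compactness implies continuity implies well-definedness, and $I_\mu:H_{\vg}^\infty\to H_{v_1}^\infty\hookrightarrow H(\D)$. The step I expect to be the crux is (i)$\Rightarrow$(ii), and here I would use essentiality of $\vg$: by Lemma \ref{essential} there are $f_{v_\gamma}\in H_{\vg}^\infty$ and $D(\vg)>0$ with $1/v_\gamma(t)\le D(\vg)f_{v_\gamma}(t)$ for $t\in[0,1)$; then, exactly as in the proof of Theorem \ref{HvH1}(i), well-definedness of $I_\mu:H_{\vg}^\infty\to H(\D)$ forces $f_{v_\gamma}|_{[0,1)}\in L_1([0,1),\mu)$ via $I_\mu(f_{v_\gamma})(0)=\int_0^1 f_{v_\gamma}(t)\,d\mu(t)\ge \frac{1}{D(\vg)}\int_0^1(1-t)^{-\gamma}d\mu(t)$, giving (ii). Finally, the ``in particular'' statement is the case $\gamma=1$ (so $\delta=0$): (i), (iv), (v) specialize to well-definedness, respectively compactness, of $I_\mu:H_{v_1}^\infty\to H_{v_1}^\infty$, and (iii) reads $\sum_n\mu_n<\infty$ (the exponent $\gamma-1=0$), so the chain of equivalences delivers the claim. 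The only mild subtlety to watch is the constant relating $(1-r)^{\gamma+\delta}$ and $1-r$ on all of $[0,1)$ when $\gamma+\delta>1$: since $(1-r)^{\gamma+\delta-1}\le 1$, one has $v_{\gamma+\delta}\le v_1$ outright, so no constant is even needed.
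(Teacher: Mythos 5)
Your proof of the equivalences (i)--(v) follows essentially the same route as the paper: (i)$\Leftrightarrow$(ii) via Theorem \ref{HvH1}(i) (the necessity part being exactly the essentiality argument you spell out), (ii)$\Leftrightarrow$(iii) via the expansion of $(1-z)^{-\gamma}$ with nonnegative Taylor coefficients $\cong n^{\gamma-1}$, (ii)$\Rightarrow$(iv) via Corollary \ref{compact1} and the continuous inclusion $H_{v_1}^\infty\hookrightarrow H_{\vgd}^\infty$ for $\gamma+\delta\geq 1$, and the remaining implications trivially; the ``in particular'' clause is the case $\gamma=1$ in both treatments.

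There is, however, one incorrect step in your justification of the final assertion about $I_{\mu}:H_{\vg}^0\rightarrow H_{v_1}^0$. You claim that $\sum_{n}\mu_n n^{\gamma-1}<\infty$ forces $\mu_n=O(1/n)$ when $\gamma\leq 1$, so that Proposition \ref{exemples C_mu(1)}(i) applies. For $0<\gamma<1$ this is false: since $(\mu_n n^{\gamma-1})_n$ is non-increasing and summable, one only obtains $\mu_n=o(n^{-\gamma})$, i.e.\ $\mu$ is a (vanishing) $\gamma$-Carleson measure, and $n^{-\gamma}$ is not $O(1/n)$; for instance a measure with moments $\mu_n\cong n^{-\gamma}(\log n)^{-2}$ satisfies (iii) but is not a Carleson measure. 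The conclusion you need, $I_\mu(1)\in H_{v_1}^0$, is nevertheless true, but for $0<\gamma<1$ it must come from Proposition \ref{exemples C_mu(1)}(ii) with $s=\gamma$, which gives $I_\mu(1)\in H_{v_{\gamma'}}^{0}$ for every $\gamma'>1-\gamma$ and in particular for $\gamma'=1$; part (i) of that proposition is only available when $\gamma\geq 1$. This case split is exactly what the paper does when it notes that (iii) makes $\mu$ a $\gamma$-Carleson measure and then invokes Proposition \ref{exemples C_mu(1)}. With that repair, the rest of your argument via Propositions \ref{Hv0} and \ref{weaklycompact} goes through as written.
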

\begin{proof}
The equivalence between (i) and (ii) is a consequence of Theorem \ref{HvH1}(i), since $\frac{1}{v_\gamma(t)}=\frac{1}{(1-t)^\gamma}$.
 (ii) is equivalent to (iii) by the fact that $f(z)=\frac{1}{(1-z)^{\gamma}},\ z\in \D,$  satisfies $f(z)=\sum_{n=0}^{\infty}a_n(\gamma)z^n,\ z\in \D,$ with $a_n(\gamma) >0,$ $a_n(\gamma)  \cong n^{\gamma-1}$  (eg.\cite[p.12]{GGM}).
Corollary \ref{compact1} ensures that (i) implies $I_{\mu}:H_{\vg}^\infty\rightarrow H_{v_1}^\infty$ is compact.  Since $H_{v_{\gamma_1}}^\infty\hookrightarrow H_{v_{\gamma_2}}^\infty$ continuously for every $\gamma_2>\gamma_1>0,$  (iv) holds.
(iv) implies (v) and (v)  implies (i) are trivial. 	 The assertion about  $I_{\mu}:H_{\vg}^0\rightarrow H_{v_1}^0$ follows from the fact that if (iii) is satisfied, then $\mu$ is a $\gamma$-Carleson measure, thus Proposition \ref{exemples C_mu(1)} yields $I_{\mu}(1)\in H_{v_1}^0$ and Propositions \ref{Hv0} and  \ref{weaklycompact} yield the conclusion.
 \end{proof}

We get the following extension of \cite[Theorem 2.1 (ii) and (iii)]{AMS}, where the continuity of the operators is obtained for $\mu$ being the Lebesgue measure.

\begin{corollary}
\label{gammagamma}
	If $\mu$ is a Carleson but  not vanishing Carleson measure on $[0,1),$  then  $I_\mu:H_{\vg}^\infty\rightarrow H_{\vg}^\infty$  is continuous if and only if  $0<\gamma< 1$. In this case,   $I_\mu:H_{\vg}^\infty\rightarrow H_{\vg}^\infty$ is never compact.
	\end{corollary}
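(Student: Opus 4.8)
The statement naturally splits according to whether $\gamma<1$ or $\gamma\geq 1$. For $0<\gamma<1$ the plan is to read it off from the two‑weight results already proved, taking $\delta=0$, which is legitimate since then $\delta\in(-\gamma,1-\gamma)$. By the equivalence of (i) and (iii) in Theorem \ref{TeoremaContinuitatGammamenor1}, $I_\mu:H_{\vg}^\infty\to H_{\vg}^\infty$ is continuous exactly when $\mu$ is a $(1-\delta)$-Carleson measure, i.e.\ a Carleson measure, which holds by hypothesis; and by the equivalence of (i) and (iv) in Theorem \ref{TeoremaCompacitatGammamenor1}, it is compact exactly when $\mu$ is a vanishing Carleson measure, which fails by hypothesis. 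So in this range the operator is continuous and never compact, as claimed.

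For $\gamma\geq 1$ the plan is to show that $I_\mu:H_{\vg}^\infty\to H(\D)$ is not even well defined, which a fortiori precludes continuity into $H_{\vg}^\infty$ (one must phrase "$I_\mu:H_{\vg}^\infty\to H_{\vg}^\infty$ is continuous" as requiring, first of all, that $I_\mu$ sends $H_{\vg}^\infty$ into $H_{\vg}^\infty\subseteq H(\D)$). By Theorem \ref{HvH1}(i) — recall every standard weight is essential — this amounts to $1/\vg\notin L_1([0,1),\mu)$, i.e.\ $\int_0^1 (1-t)^{-\gamma}\,d\mu(t)=\infty$; equivalently one may invoke the failure of (i)$\Leftrightarrow$(ii) in Theorem \ref{TeoremaContinuitatGammamajor1}. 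Since $(1-t)^{\gamma}\leq 1-t$ on $[0,1)$ when $\gamma\geq 1$, it suffices to handle the borderline exponent and show $\int_0^1 (1-t)^{-1}\,d\mu(t)=\sum_{n\geq 0}\mu_n=\infty$. Thus everything reduces to the following fact about the measure itself: \emph{if $\mu$ is a Carleson measure but not a vanishing Carleson measure, then $\sum_n\mu_n=\infty$}.

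This last fact is the only genuine step, and I would prove it by contraposition. Assume $\sum_n\mu_n=\int_0^1(1-t)^{-1}\,d\mu(t)<\infty$ and let $\eps>0$; pick $t_0\in[0,1)$ with $\int_{[t_0,1)}(1-s)^{-1}\,d\mu(s)<\eps$. Then for $t\in[t_0,1)$, using $1-s\leq 1-t$ on $[t,1)$, $\mu([t,1))=\int_{[t,1)}(1-s)(1-s)^{-1}\,d\mu(s)\leq (1-t)\int_{[t_0,1)}(1-s)^{-1}\,d\mu(s)<\eps(1-t)$, so $\limsup_{t\to 1^-}\mu([t,1))/(1-t)\leq\eps$; letting $\eps\to 0$ gives $\mu([t,1))=o(1-t)$, i.e.\ $\mu$ is a vanishing Carleson measure, contrary to assumption. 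No real obstacle is expected elsewhere: the $\gamma<1$ half is a direct citation, the reduction of general $\gamma\geq 1$ to $\gamma=1$ is the trivial inequality $(1-t)^\gamma\leq 1-t$, and the "never compact" assertion is immediate from Theorem \ref{TeoremaCompacitatGammamenor1} (in the range where continuity holds at all). If one prefers to avoid the moment language, the same bound on $\mu([t,1))$ can be used directly to prove $\int_0^1(1-t)^{-\gamma}\,d\mu(t)=\infty$ for every $\gamma\geq 1$ at once.
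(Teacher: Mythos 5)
Your proof is correct, and its skeleton is the same as the paper's: for $0<\gamma<1$ you take $\delta=0$ in Theorems \ref{TeoremaContinuitatGammamenor1} and \ref{TeoremaCompacitatGammamenor1}, and for $\gamma\geq 1$ you show the operator is not even well defined because $\int_0^1(1-t)^{-\gamma}\,d\mu(t)$ (equivalently $\sum_n\mu_n n^{\gamma-1}$, via Theorem \ref{TeoremaContinuitatGammamajor1}) diverges. The one step where you genuinely depart from the paper is the key fact that a Carleson but non-vanishing Carleson measure has $\sum_n\mu_n=\infty$. The paper obtains this by chaining two quoted results: Olivier's theorem (\cite[Theorem 3.3.1]{Knopp}: a convergent series of decreasing positive terms satisfies $n\mu_n\to 0$) together with the moment characterization of vanishing Carleson measures ($\mu$ is vanishing Carleson iff $\mu_n=o(1/n)$, from \cite[Lemma 2]{GGM}). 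You instead prove the contrapositive directly at the level of the measure: if $\int_0^1(1-s)^{-1}\,d\mu(s)<\infty$, then for $t\geq t_0$ one has $\mu([t,1))\leq (1-t)\int_{[t_0,1)}(1-s)^{-1}\,d\mu(s)$, which forces $\mu([t,1))=o(1-t)$. Your estimate is valid and makes the argument self-contained, bypassing both citations (in particular you never need to pass between the Carleson condition and the asymptotics of the moments), at the cost of a few explicit lines; the paper's version is a two-line citation chase. Your reduction of $\gamma\geq 1$ to the borderline case via $(1-t)^{\gamma}\leq 1-t$ plays exactly the role of the inequality $n^{\gamma-1}\geq 1$ in condition (iii) of Theorem \ref{TeoremaContinuitatGammamajor1}, so the two routes reconnect there.
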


	\begin{proof}
 $\sum_{n=0}^\infty \mu_n<\infty$ implies $\lim_nn\mu_n= 0$ \cite[Theorem 3.3.1]{Knopp},  hence if  $\mu$ is not vanishing Carleson, then $\sum_{n=0}^\infty  \mu_n$ must diverge. Hence, if $\gamma\geq 1,$ then $I_\mu:H_{\vg}^\infty\rightarrow H_{\vg}^\infty$  is not well defined by the equivalence between (iii) and (v) in Theorem \ref{TeoremaContinuitatGammamajor1}. If $0<\gamma<1$  then $I_\mu:H_{\vg}^\infty\rightarrow H_{\vg}^\infty$ is  continuous but not compact by Theorems \ref{TeoremaContinuitatGammamenor1} and \ref{TeoremaCompacitatGammamenor1}.
		\end{proof}
	
 For $0<\gamma<1$, both the continuity and compactness of $I_\mu:H_{v_\gamma}^\infty\to H_{v_\gamma}^\infty$ are equivalent to those of the Ces\`aro type operator $C_\mu:H_{v_\gamma}^\infty\to H_{v_\gamma}^\infty$ by \cite[Theorems 10 and   11]{BBJCesaro} and Theorems \ref{TeoremaContinuitatGammamenor1} and   \ref{TeoremaCompacitatGammamenor1} above.  The next example shows a difference between the behaviour of  $I_{\mu}$ and $H_{\mu}$, and the behaviour of   $C_{\mu}$ when   $\gamma\geq 1$.

\begin{example}
	\label{expel}
For the positive finite Borel measure $\mu(t)=\frac{dt}{\log \frac{e}{1-t}}$, $t\in [0,1),$  we get:
\begin{itemize}
	\item[(i)] The operator  $I_{\mu}:H_{v_1}^\infty\rightarrow H(\D)$ is  not well defined.
	\item[(ii)] The operator  $H_{\mu}:H_{v_1}^\infty\rightarrow H(\D)$ is  not well defined.
	\item[(iii)] The Cesàro-type operator  $C_{\mu}:H_{v_1}^\infty\rightarrow H_{v_1}^\infty$ is  compact.
\end{itemize}
\end{example}

\begin{proof}  By \cite[Lemma 2.1 (vi)]{ExPelaez}, we get that  
$\mu$ is a positive finite Borel measure with moments $\mu_n\cong \frac{1}{n\log n},$ $n\in \N_0$. So, it is a vanishing Carleson measure but $\sum_{n=0}^{\infty}\mu_n$ diverges.  (i) is a consecuence of Theorem \ref{TeoremaContinuitatGammamajor1}. (ii) follows from the fact that $f(z)=\frac{1}{1-z}\in  H_{v_1}^\infty$ but $H_{\mu}(f)(0)=\sum_{n=0}^{\infty}\mu_n$. (iii) is immediate from \cite[Theorem 11]{BBJCesaro}.
\end{proof}

Recall that if $H_{\mu}$ is well defined on $H_{\vg}^{\infty}$, then $\sum_{n=0}^{\infty}\mu_na_n$ converges in $\C$ for each $f(z)=\sum_{n=0}^{\infty}a_nz^n\in H_{\vg}^{\infty}$. This is satisfied if $\sum_{n=0}^{\infty}\mu_n|a_n|<\infty$ for each $f(z)=\sum_{n=0}^{\infty}a_nz^n\in H_{\vg}^{\infty}$.
The   following result gives us information about the convergence of the last series for $s$-Carleson measures.

\begin{proposition}{\cite[Theorem 18]{Pavlovic2014}, \cite[Theorem 2.1]{Jevtic2017}}\label{Thm_coef_sumable}
\label{PJ}
Let $\gamma>0$ and  $s\geq 1.$  Then $\sum_{n=0}^{\infty}\frac{|a_n|}{(n+1)^{s}}<\infty$  for every $\sum_{n=0}^{\infty}a_nz^n\in H_{\vg}^{\infty}$  if and only if  $s>\gamma+1/2.$  In particular:
\begin{itemize}
	\item[(i)]  If $0<\gamma<1/2,$ then    $\sum_{n=0}^{\infty}\frac{|a_n|}{n+1}<\infty$ for every $\sum_{n=0}^{\infty}a_nz^n\in H_{\vg}^{\infty}$.
	\item[(ii)] If $\gamma\geq 1/2,$ then there exists   $\sum_{n=0}^{\infty}a_nz^n\in H_{\vg}^{\infty}$  such that  $\sum_{n=0}^{\infty}\frac{|a_n|}{n+1}$  diverges.
	\end{itemize}
\end{proposition}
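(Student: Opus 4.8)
The statement is Proposition~\ref{PJ}, which is quoted from \cite{Pavlovic2014} and \cite{Jevtic2017}, so strictly speaking the plan is to cite those references; but it is instructive to reconstruct the argument. The plan is to prove the equivalence ``$\sum_n |a_n|/(n+1)^s < \infty$ for every $\sum_n a_n z^n \in H_{v_\gamma}^\infty$ $\iff$ $s > \gamma + 1/2$'' in two directions, and then read off the particular cases (i) and (ii) by setting $s=1$: if $\gamma < 1/2$ then $1 > \gamma + 1/2$, while if $\gamma \geq 1/2$ then $1 \leq \gamma + 1/2$.

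\emph{Sufficiency ($s > \gamma + 1/2$ implies summability).} First I would recall the standard coefficient growth estimate for $H_{v_\gamma}^\infty$: if $f(z) = \sum_n a_n z^n \in H_{v_\gamma}^\infty$ then $|a_n| \lesssim \|f\|_{v_\gamma}\, (n+1)^\gamma$ (obtained by Cauchy's estimates $|a_n| \leq r^{-n} M_\infty(r,f) \leq r^{-n}\|f\|_{v_\gamma}(1-r)^{-\gamma}$ and optimizing at $r = 1 - 1/n$). This crude bound alone gives summability only for $s > \gamma+1$, so the gain from $\gamma+1$ down to $\gamma+1/2$ must come from an $L^2$-type averaging. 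The key step is to pass through the Hardy--Littlewood type inequality $\sum_n \frac{|a_n|^2}{(n+1)^{2\gamma+1}} \lesssim \|f\|_{v_\gamma}^2$, which follows from integrating $M_2^2(r,f) \le M_\infty^2(r,f) \le \|f\|_{v_\gamma}^2 (1-r)^{-2\gamma}$ against $(1-r)^{2\gamma}\,dr$ near $1$ (this is a classical fact; a reference is \cite[p.~12]{GGM} or Pavlović's book). Then Cauchy--Schwarz gives
\[
\sum_{n=0}^{\infty} \frac{|a_n|}{(n+1)^s}
\leq \left(\sum_{n=0}^{\infty} \frac{|a_n|^2}{(n+1)^{2\gamma+1}}\right)^{1/2}
\left(\sum_{n=0}^{\infty} \frac{1}{(n+1)^{2s-2\gamma-1}}\right)^{1/2},
\]
and the second factor is finite precisely when $2s - 2\gamma - 1 > 1$, i.e.\ $s > \gamma + 1/2$.

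\emph{Necessity ($s \leq \gamma + 1/2$ forces a counterexample).} Here the plan is to exhibit $f \in H_{v_\gamma}^\infty$ with $\sum_n |a_n|/(n+1)^s = \infty$. The natural candidates are lacunary series or random power series: take $a_n = 0$ except on a lacunary set $\{2^k\}$ with $a_{2^k} = 2^{k\gamma}$ (or a Hadamard gap series with comparable blocks), so that $f$ has the maximal allowed coefficient growth. A lacunary series with $|a_{2^k}| \lesssim 2^{k\gamma}$ lies in $H_{v_\gamma}^\infty$ because for such series $M_\infty(r,f) \cong M_2(r,f) \cong \big(\sum_k 2^{2k\gamma} r^{2\cdot 2^k}\big)^{1/2} \cong (1-r)^{-\gamma}$. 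Meanwhile $\sum_n |a_n|/(n+1)^s \cong \sum_k 2^{k\gamma}/2^{ks} = \sum_k 2^{k(\gamma - s)}$, which diverges when $s \leq \gamma$; to cover the remaining strip $\gamma < s \leq \gamma + 1/2$ one needs the finer construction — replace the single term $2^{k\gamma}$ at position $2^k$ by a full block of $\sim 2^k$ coefficients each of size $\sim 2^{k(\gamma-1/2)}$ (a ``fractal'' / de Leeuw--Katznelson--Kahane type example, which is exactly the point of Pavlović's and Jevtić--Pavlović's theorems), for which the block still contributes $O(2^{-k\eps})$ to the sup-norm growth rate but contributes $\sim 2^k \cdot 2^{k(\gamma-1/2)} / 2^{ks} = 2^{k(\gamma+1/2-s)}$ to the target sum, which fails to be summable precisely at $s \le \gamma+1/2$.

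\emph{Main obstacle.} The sufficiency direction is routine once the $\ell^2$-estimate is invoked. The genuinely hard part is the sharpness in the critical strip $\gamma < s \le \gamma+1/2$: the obvious lacunary example only reaches $s \le \gamma$, and closing the extra $1/2$ requires the block construction with carefully chosen coefficient signs (or a randomization / Rudin--Shapiro polynomial argument) to keep the sup-norm under control while saturating the $\ell^1$-side — this is precisely where one should simply defer to \cite[Theorem~18]{Pavlovic2014} and \cite[Theorem~2.1]{Jevtic2017} rather than reproduce the construction.
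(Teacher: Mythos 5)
The paper gives no proof of this proposition: it is quoted verbatim from \cite[Theorem 18]{Pavlovic2014} and \cite[Theorem 2.1]{Jevtic2017}, so "cite the references" is exactly what the authors do. Your sketch of the necessity direction is a fair outline (plain lacunary series only reach $s\leq\gamma$; the critical strip $\gamma<s\leq\gamma+1/2$ needs blocks of $\sim 2^k$ coefficients of modulus $\sim 2^{k(\gamma-1/2)}$ with signs controlling the sup-norm), and deferring those details is appropriate.

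Your sufficiency argument, however, has a genuine gap. The second Cauchy--Schwarz factor $\sum_n (n+1)^{-(2s-2\gamma-1)}$ converges iff $2s-2\gamma-1>1$, which is $s>\gamma+1$, not $s>\gamma+1/2$ as you assert; so pairing Cauchy--Schwarz with the global Hardy--Littlewood bound $\sum_n |a_n|^2(n+1)^{-2\gamma-1}\lesssim \|f\|_{v_\gamma}^2$ only covers $s>\gamma+1$. This is not a repairable slip: the sequence $b_n=(n+1)^{\gamma+1/2}/\log(n+2)$ satisfies $\sum_n b_n^2(n+1)^{-2\gamma-1}<\infty$ while $\sum_n b_n(n+1)^{-s}=\infty$ for every $s<\gamma+3/2$, so the global weighted $\ell^2$ estimate simply cannot see the threshold $\gamma+1/2$. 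The correct input is the localized (dyadic block) version, $\sum_{n=2^k}^{2^{k+1}-1}|a_n|^2\lesssim 2^{2k\gamma}\|f\|_{v_\gamma}^2$, obtained by taking $r=1-2^{-k}$ in $M_2^2(r,f)\leq\|f\|_{v_\gamma}^2(1-r)^{-2\gamma}$; this is precisely the solid-hull description of $H_{v_\gamma}^\infty$ recorded in Proposition \ref{Solid}. Cauchy--Schwarz on each block of length $2^k$ then yields $\sum_{n=2^k}^{2^{k+1}-1}|a_n|\lesssim 2^{k(\gamma+1/2)}\|f\|_{v_\gamma}$, whence $\sum_n |a_n|(n+1)^{-s}\lesssim\sum_k 2^{k(\gamma+1/2-s)}<\infty$ exactly when $s>\gamma+1/2$ --- this is the computation the authors themselves carry out in \eqref{hullestimate}, and it is the argument you should substitute for your global Cauchy--Schwarz step.
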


As a consequence of Proposition \ref{Thm_coef_sumable}(ii),  $H_{\vg}^{\infty}\nsubseteq H^1$ for $\gamma\geq 1/2,$ as a consequence of   the Hardy inequality.

\begin{corollary}\label{cont_H}
	Let $\mu$ be a positive finite Borel measure on $[0,1)$,  		$\gamma>0,$ and $\int_{0}^{1}\frac{d\mu(t)}{v_{\gamma}(t)}<\infty.$ Then  $H_{\mu}: H_{\vg}^{\infty}\rightarrow H(\D)$ is well defined and continuous and	 $H_{\mu}=I_{\mu}$ on $H_{\vg}^{\infty} $ in the following cases:
	
		\begin{itemize}
			\item[(a)] 		If $\gamma <\frac{1}{2}$ and $\mu$ is a Carleson measure. In this case, $H_{\mu}: H_{\vg}^{\infty}\rightarrow H_{\vgd}^{\infty}$ and $H_{\mu}: H_{\vg}^0\rightarrow H_{\vgd}^0$ are continuous for  $\delta=0$ and compact  for $\delta>0$.
	                 \item[(b)] 		  If $\gamma\geq \frac{1}{2}$ and $\mu$ is an  $s$-Carleson measure for $s>\gamma+1/2$.  In this case, $H_{\mu}: H_{\vg}^{\infty}\rightarrow H_{\vgd}^{\infty}$ and $H_{\mu}: H_{\vg}^0\rightarrow H_{\vgd}^0$ are continuous for $\delta = 1-s$ and compact for  $\delta >1-s$.
				\end{itemize}

\end{corollary}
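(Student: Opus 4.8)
The plan is to reduce everything to the facts already established about the operator $I_\mu$ and then transfer them to $H_\mu$ via the coincidence $H_\mu=I_\mu$. The first step is to verify that under the hypothesis $\int_0^1 d\mu(t)/v_\gamma(t)<\infty$ the operator $I_\mu:H_{\vg}^\infty\to H(\D)$ is well defined and continuous (Theorem \ref{HvH1}(i)), and that in each of the cases (a), (b) the series $\sum_{k=0}^\infty \mu_{n+k}a_k$ is in fact absolutely convergent for every $n\in\N_0$ whenever $f(z)=\sum_k a_kz^k\in H_{\vg}^\infty$. For case (a), since $\gamma<1/2$, Proposition \ref{Thm_coef_sumable}(i) gives $\sum_{k}|a_k|/(k+1)<\infty$; combining this with $\mu_{n+k}\le\mu_k\le C/(k+1)$ (Carleson) yields $\sum_k\mu_{n+k}|a_k|<\infty$. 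For case (b), since $\mu$ is $s$-Carleson with $s>\gamma+1/2$ one has $\mu_{n+k}=O((n+k+1)^{-s})=O((k+1)^{-s})$, and Proposition \ref{Thm_coef_sumable} applied with this same $s>\gamma+1/2$ gives $\sum_k|a_k|/(k+1)^s<\infty$, hence again $\sum_k\mu_{n+k}|a_k|<\infty$. In both cases Proposition \ref{HI} (or the last sentence of Proposition \ref{BenDefinitHdGeneralX}) gives $H_\mu=I_\mu$ on $H_{\vg}^\infty$, and $H_\mu:H_{\vg}^\infty\to H(\D)$ is well defined and continuous.

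The second step is to identify, in each case, the right target weight and invoke the continuity/compactness results for $I_\mu$ between standard-weight spaces. In case (a), $\mu$ is a Carleson measure, i.e.\ a $1$-Carleson measure, so with $\delta=0$ we have $\mu$ is a $(1-\delta)$-Carleson measure and Theorem \ref{TeoremaContinuitatGammamenor1} gives continuity of $I_\mu:H_{\vg}^\infty\to H_{\vg}^\infty$; for $\delta>0$ (still with $\delta<1-\gamma$, which is automatic since $\gamma<1/2<1$, but one should note $\delta$ ranges in $(0,1-\gamma)$) the measure is trivially a vanishing $(1-\delta)$-Carleson measure because $\mu_n=O(1/n)=o(1/n^{1-\delta})$, so Theorem \ref{TeoremaCompacitatGammamenor1} gives compactness. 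In case (b), $\mu$ is $s$-Carleson, so taking $\delta=1-s$ we get $\mu$ is a $(1-\delta)$-Carleson measure and Theorem \ref{TeoremaContinuitatGammamenor1} gives continuity of $I_\mu:H_{\vg}^\infty\to H_{v_{\gamma+1-s}}^\infty$; note $\delta=1-s<1-\gamma-1/2<1-\gamma$ and $\delta>-\gamma$ iff $s<\gamma+1$, which one must assume is within the stated range (or else the case $s\ge\gamma+1$ makes the target weight non-standard and is handled by $H^\infty$-type embeddings — I would restrict to $1-\gamma>\delta$, i.e.\ $\gamma+1/2<s<\gamma+1$, and mention the $s\ge\gamma+1$ case reduces further). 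For $\delta>1-s$ the measure is a vanishing $(1-\delta)$-Carleson measure, so Theorem \ref{TeoremaCompacitatGammamenor1} gives compactness.

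The third step is the transfer to the little spaces $H_{\vg}^0$. Since $H_\mu=I_\mu$ and $I_\mu$ is continuous (resp.\ compact) on $H_{\vg}^\infty$, I would apply the $H^0$-statements of Theorems \ref{TeoremaContinuitatGammamenor1} and \ref{TeoremaCompacitatGammamenor1}, which already include the conclusion for $H_{\vg}^0\to H_{\vgd}^0$ under exactly the Carleson (resp.\ vanishing Carleson) hypotheses, together with the requirement $1/\vg\in L_1([0,1),\mu)$ — which is the standing hypothesis here. Thus $H_\mu:H_{\vg}^0\to H_{\vgd}^0$ inherits continuity for $\delta$ at the endpoint and compactness for $\delta$ above it.

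I do not expect a serious obstacle: the proof is essentially a dictionary translation, and the only genuine content is the absolute-summability check $\sum_k\mu_{n+k}|a_k|<\infty$ that licenses $H_\mu=I_\mu$, which follows from Proposition \ref{Thm_coef_sumable} combined with the Carleson decay of the moments. The one point requiring care is bookkeeping on the admissible range of $\delta$: in case (b) the value $\delta=1-s$ must satisfy $\delta\in(-\gamma,1-\gamma)$ for Theorems \ref{TeoremaContinuitatGammamenor1}–\ref{TeoremaCompacitatGammamenor1} to apply verbatim, which forces $\gamma+1/2<s<\gamma+1$; if one wants the full range $s>\gamma+1/2$ one should additionally remark that larger $s$ only strengthens the Carleson condition and hence the conclusion still holds (by composing with the continuous inclusion $H_{v_{\gamma+1-s}}^\infty\hookrightarrow H_{\vgd}^\infty$ for the relevant $\delta$), exactly as in the proof of Theorem \ref{TeoremaContinuitatGammamajor1}.
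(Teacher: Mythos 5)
Your proposal is correct and follows essentially the same route as the paper, which proves the corollary in one line by combining Proposition \ref{HI}, Proposition \ref{Thm_coef_sumable} and Theorems \ref{TeoremaContinuitatGammamenor1}, \ref{TeoremaCompacitatGammamenor1} and \ref{TeoremaContinuitatGammamajor1}; your summability check $\sum_k\mu_k|a_k|<\infty$ (using the moment decay $\mu_n=O(n^{-s})$ equivalent to the $s$-Carleson condition) is exactly the intended use of those propositions. Your remark on the admissible range of $\delta$ (handled via Theorem \ref{TeoremaContinuitatGammamajor1} when $\gamma+\delta\geq 1$, and requiring $\gamma+\delta>0$ at the endpoint $\delta=1-s$) is a legitimate bookkeeping point that the paper leaves implicit.
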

\begin{proof}
This is a consequence of Proposition \ref{HI} and Proposition \ref{PJ}, together with Theorems \ref{TeoremaContinuitatGammamenor1}, \ref{TeoremaCompacitatGammamenor1} and \ref{TeoremaContinuitatGammamajor1}.
\end{proof}

\section{The generalized Hilbert operator $H_{\mu}$ on solid hulls and cores}

The solid hull  and the solid core of $H_{\vg}^{\infty}$ are also   of interest in the study of the well definition of $H_{\mu}$ on this space. In this context we identify an analytic function $f(z)=\sum_{n=0}^{\infty}a_nz^n\in H(\D)$ with the sequence of its Taylor coefficients $(a_n)_n$. Given  $A$  a vector space of complex sequences containing  the space of all the sequences with finitely many non-zero coordinates, the \emph{solid hull} of $A$ is
$$S(A):=\{(c_n)_n: \ \exists(a_n)_n\in A \text{ such that } |c_n|\leq |a_n| \ \forall n\in \N\}\supseteq A,$$
and the \emph{solid core} of $A$ is
$$s(A):=\{(c_n)_n: \ (c_na_n)_n\in A \ \forall (a_n)_n\in \ell_{\infty}\}\subseteq A.$$

In the next result we describe the solid hull  and the solid core of the spaces under consideration.

	\begin{proposition}{\cite[Theorems 8.2.1 and  8.3.4]{LlibreVukotic}}\label{Solid}
		For every $\gamma>0$,
		$$S(H_{v_{\gamma}}^{\infty})=\left\{(a_m)_m:\ \sup_{n\in \N_0}\left(\sum_{m=2^n}^{2^{n+1}-1}\frac{|a_m|^2}{(m+1)^{2\gamma}}\right)^{1/2}<\infty\right\},$$
	and
		$$s(H_{v_{\gamma}}^{\infty})=\left\{(a_m)_m:\ \sup_{n\in \N_0}\left(\sum_{m=2^n}^{2^{n+1}-1}\frac{|a_m|}{(m+1)^{\gamma}}\right)<\infty\right\}$$
	Both are Banach spaces of analytic functions for the norms $\|f\|_{S(H_{\vg}^{\infty})}$ and $\|f\|_{s(H_{\vg}^{\infty})}$ defined by the supremum.
\end{proposition}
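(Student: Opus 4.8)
The plan is to establish each identity by two inclusions, disposing first of the solid core (which is elementary) and then the solid hull; the side assertions --- that both sequence spaces on the right are solid, are complete under the displayed block norms, and embed continuously into $(H(\D),\tau_0)$ --- are routine once one observes that every coordinate functional is dominated by the block norm, so I would only remark on them. Throughout, $I_n:=\{2^n,2^n+1,\dots,2^{n+1}-1\}$ is the $n$-th dyadic block, and I write $B_\gamma$ and $s_\gamma$ for the spaces on the right of the two equalities; note the comparison $(m+1)^\beta\cong 2^{n\beta}$ for $m\in I_n$, which will be used repeatedly.

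For the solid core, the inclusion $s_\gamma\subseteq s(H_{v_\gamma}^\infty)$ is a direct estimate: for $(c_m)_m\in s_\gamma$ and $(a_m)_m\in\ell_\infty$, bounding $\sum_{m\in I_n}|c_m|\le 2^{(n+1)\gamma}\|(c_m)\|_{s_\gamma}$ and summing the blocks gives $\sum_m|c_ma_m|\,|z|^m\le\|(a_m)\|_\infty\sum_n|z|^{2^n}\sum_{m\in I_n}|c_m|\lesssim\|(a_m)\|_\infty\|(c_m)\|_{s_\gamma}\sum_n 2^{(n+1)\gamma}|z|^{2^n}\lesssim\|(a_m)\|_\infty\|(c_m)\|_{s_\gamma}(1-|z|)^{-\gamma}$, hence $\sum_m c_ma_mz^m\in H_{v_\gamma}^\infty$. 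For the converse, if $(c_m)_m\in s(H_{v_\gamma}^\infty)$ then the multiplication map $T\colon\ell_\infty\to H_{v_\gamma}^\infty$, $T((a_m)_m)=\sum_m c_ma_mz^m$, is everywhere defined and hence bounded by the closed graph theorem; for fixed $n$, testing $T$ on the sequence of modulus $\le 1$ supported on $I_n$ that makes $c_ma_m=|c_m|$ there, and evaluating the resulting function at the real point $r=1-2^{-n-1}$, gives $c_0\sum_{m\in I_n}|c_m|\le\sum_{m\in I_n}|c_m|r^m\le\|T\|(1-r)^{-\gamma}\cong 2^{(n+1)\gamma}$ with $c_0:=\inf_n(1-2^{-n-1})^{2^{n+1}}>0$, and dividing by $(m+1)^\gamma\cong 2^{n\gamma}$ yields $(c_m)_m\in s_\gamma$.

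For the solid hull, the inclusion $S(H_{v_\gamma}^\infty)\subseteq B_\gamma$ is the easy half: $B_\gamma$ is visibly solid, so it is enough to see $H_{v_\gamma}^\infty\subseteq B_\gamma$, and for $f(z)=\sum_m a_mz^m\in H_{v_\gamma}^\infty$ Parseval on the circle $|z|=r_n:=1-2^{-n-1}$ gives $\sum_m|a_m|^2r_n^{2m}\le\|f\|_{v_\gamma}^2(1-r_n)^{-2\gamma}$; restricting the sum to $m\in I_n$, where $r_n^{2m}$ is bounded below by an absolute constant and $(m+1)^{2\gamma}\cong(1-r_n)^{-2\gamma}$, yields $\sum_{m\in I_n}|a_m|^2/(m+1)^{2\gamma}\lesssim\|f\|_{v_\gamma}^2$ uniformly in $n$. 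The reverse inclusion $B_\gamma\subseteq S(H_{v_\gamma}^\infty)$ requires, given $(c_m)_m\in B_\gamma$, the construction of a single majorant $(a_m)_m\in H_{v_\gamma}^\infty$ with $|a_m|\ge|c_m|$. The intended route is to regularize block by block: on each $I_n$ one replaces $(c_m)_{m\in I_n}$ by a majorizing sequence of comparable $\ell^2$-norm whose profile is slowly varying (a bounded-variation profile after a logarithmic change of scale, the few unusually large coefficients of the block being kept as they are), chooses Rudin--Shapiro type signs, and uses Abel summation on each circle $|z|=1-2^{-n}$ to obtain a block polynomial whose sup-norm on that circle is comparable to its $\ell^2$-coefficient-norm, hence $\lesssim 2^{n\gamma}$; assembling $f$ as the sum of the block polynomials and, for a given $z$, splitting the sum at the index $n$ with $2^n\sim(1-|z|)^{-1}$, the near blocks contribute their individual sup-norms (geometrically summable against $2^{n\gamma}$) while the far blocks are killed by the factor $|z|^{2^n}$, keeping $(1-|z|)^\gamma|f(z)|$ bounded.

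I expect the last construction to be the main obstacle: the delicate point is to perform the block-wise flattening with no logarithmic loss, since a crude choice of random signs would cost a factor $\sqrt{\log(1/(1-|z|))}$; circumventing this requires the regularization of the coefficient profiles together with structured (Rudin--Shapiro) signs and summation by parts, and this is the reason the result is quoted from \cite[Theorems 8.2.1 and 8.3.4]{LlibreVukotic}. Everything else --- the Parseval half of the hull, both halves of the core, completeness of $B_\gamma$ and $s_\gamma$, and the dyadic comparisons relating $\sum_m|a_m|^2\rho^{2m}$ and $\sum_m|c_m|\rho^m$ to the block norms --- is routine.
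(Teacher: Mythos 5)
The paper itself gives no proof of this proposition: it is quoted verbatim from \cite[Theorems 8.2.1 and 8.3.4]{LlibreVukotic} (the Bennett--Stegenga--Timoney description of the solid hull together with the classical computation of the solid core), so there is no in-paper argument to compare yours against. Judged on its own terms, your treatment of the solid core is correct and complete in both directions: the elementary estimate $\sum_n 2^{(n+1)\gamma}|z|^{2^n}\lesssim (1-|z|)^{-\gamma}$ yields $s_\gamma\subseteq s(H_{v_\gamma}^\infty)$, and the closed-graph/test-sequence argument evaluated at $r=1-2^{-n-1}$ yields the converse. The Parseval half of the hull identity (solidity of $B_\gamma$ plus $H_{v_\gamma}^\infty\subseteq B_\gamma$) is likewise correct.

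The genuine gap is exactly where you locate it: the inclusion $B_\gamma\subseteq S(H_{v_\gamma}^\infty)$. Your reduction is sound --- it suffices to produce on each dyadic block a polynomial $P_n(z)=\sum_{m\in I_n}a_m z^m$ with $|a_m|\ge |c_m|$ and $\|P_n\|_{H^\infty}\lesssim 2^{n\gamma}$, since $|P_n(z)|\le |z|^{2^n}\|P_n\|_{H^\infty}$ and the blocks then sum against $\sum_n 2^{n\gamma}|z|^{2^n}\lesssim(1-|z|)^{-\gamma}$ --- but the block step is a theorem, not an estimate. Rudin--Shapiro signs combined with Abel summation on a block of length $N$, even after rearranging the profile to be monotone, give only $\|\sum_j \epsilon_j b_j z^j\|_\infty\lesssim \sum_j b_j/\sqrt{j}$, which by Cauchy--Schwarz (or by $b_j\le \|b\|_2/\sqrt{j}$ for decreasing profiles) is $\|b\|_2$ times a power of $\log N$; removing that logarithm while preserving the majorization $|a_m|\ge|c_m|$ is precisely the content of the cited theorems, and amounts to the de Leeuw--Kahane--Katznelson/Kisliakov majorant theorem (an analytic polynomial $P$ with $|a_m|\ge b_m$ and $\|P\|_\infty\le C\|b\|_2$, $C$ absolute). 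Since you present this step only as an ``intended route'' and defer to the reference, your proposal establishes the core identity and one inclusion of the hull identity, but not the proposition as a whole; for the purposes of this paper that is no worse than what the authors do, since they cite the result rather than prove it.
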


\begin{proposition} Let $\gamma>0$ and let $\mu$ be an $s$-Carleson measure,  for $s>0$.

\begin{itemize}
\item[(i)] 	If   $s>\gamma+1/2$, then $H_{\mu}:S(H_{\vg}^{\infty})\rightarrow H(\D)$ is a well defined continuous operator and $H_\mu=I_\mu$ on $S(H_{\vg}^{\infty})$. If in addition $\gamma\geq \frac12,$ then $H_{\mu}:S(H_{\vg}^{\infty})\rightarrow S(H_{\vg}^{\infty})$ is a well defined continuous operator.
\item[(ii)]  If $s>\gamma$, then
  $H_{\mu}:s(H_{\vg}^{\infty})\rightarrow H(\D)$ is a well defined continuous  operator and  $H_{\mu}=I_{\mu}$ on $s(H_{\vg}^{\infty})$.   If in addition $\gamma\geq 1,$  then $H_{\mu}:s(H_{\vg}^{\infty})\rightarrow s(H_{\vg}^{\infty})$ is a well defined continuous  operator.
	\end{itemize}

\end{proposition}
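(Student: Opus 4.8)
\emph{Strategy.} The plan is to reduce both parts to dyadic-block estimates on Taylor coefficients, using the descriptions of $S(H_{\vg}^{\infty})$ and $s(H_{\vg}^{\infty})$ from Proposition~\ref{Solid} and invoking Proposition~\ref{HI} to identify $H_\mu$ with $I_\mu$. Two elementary facts are used throughout: since $\mu$ is $s$-Carleson one has $\mu_n=O(n^{-s})$ (integration by parts against $t\mapsto\mu([t,1))$, exactly as in the case $s=1$ recalled after the definition of Carleson measures), and the moments $(\mu_n)_n$ are decreasing, so $\mu_{n+k}\le\mu_k$ for all $n,k\in\N_0$.

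\emph{Well-definedness into $H(\D)$ and the identity $H_\mu=I_\mu$.} For the first assertion of (i) I verify the hypothesis of Proposition~\ref{HI}: for $f(z)=\sum_n a_nz^n\in S(H_{\vg}^{\infty})$, bound $\sum_n\mu_n|a_n|\lesssim\sum_n(n+1)^{-s}|a_n|$ and decompose the latter over the dyadic blocks $[2^j,2^{j+1})$, on which $(n+1)\cong 2^j$. Cauchy--Schwarz together with the description of $S(H_{\vg}^{\infty})$ give $\sum_{k=2^j}^{2^{j+1}-1}|a_k|\lesssim 2^{j(\gamma+1/2)}\|f\|_{S(H_{\vg}^{\infty})}$, whence the $j$-th block is $\lesssim 2^{j(\gamma+1/2-s)}\|f\|_{S(H_{\vg}^{\infty})}$; since $s>\gamma+1/2$ the geometric series converges and $\sum_n\mu_n|a_n|\lesssim\|f\|_{S(H_{\vg}^{\infty})}$. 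Proposition~\ref{HI} then yields $H_\mu=I_\mu$ on $S(H_{\vg}^{\infty})$ with values in $H(\D)$, and continuity into $(H(\D),\tau_0)$ follows from $|H_\mu f(z)|\le\sum_n\bigl(\sum_k\mu_{n+k}|a_k|\bigr)|z|^n\le(1-|z|)^{-1}\sum_k\mu_k|a_k|\lesssim(1-|z|)^{-1}\|f\|_{S(H_{\vg}^{\infty})}$. The first assertion of (ii) is the same computation with $s(H_{\vg}^{\infty})$ replacing $S(H_{\vg}^{\infty})$: on the $j$-th block $(n+1)^{-s}\cong 2^{-j(s-\gamma)}(n+1)^{-\gamma}$, so the block sum is $\lesssim 2^{-j(s-\gamma)}\|f\|_{s(H_{\vg}^{\infty})}$, and $s>\gamma$ gives summability.

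\emph{$H_\mu$ maps the space into itself.} In either case $H_\mu f\in H(\D)$ has Taylor coefficients $b_n=\sum_k\mu_{n+k}a_k$. For (i), with $\gamma\ge\tfrac12$ (so $s>\gamma+1/2\ge1$): since $n\ge 2^j$ on the $j$-th block, $|b_n|\lesssim\sum_k(n+k+1)^{-s}|a_k|\le B_j:=\sum_k(2^j+k+1)^{-s}|a_k|$ uniformly there, so the $j$-th term of $\|H_\mu f\|_{S(H_{\vg}^{\infty})}$ is $\cong\bigl(2^{-2j\gamma}\sum_{n=2^j}^{2^{j+1}-1}|b_n|^2\bigr)^{1/2}\lesssim 2^{j(1/2-\gamma)}B_j$. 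Splitting $B_j$ at $k=2^j$ and using $\sum_{k=2^i}^{2^{i+1}-1}|a_k|\lesssim 2^{i(\gamma+1/2)}\|f\|_{S(H_{\vg}^{\infty})}$ (Cauchy--Schwarz; the tail $k\ge 2^j$ sums because $\gamma+1/2-s<0$) gives $B_j\lesssim 2^{j(\gamma+1/2-s)}\|f\|_{S(H_{\vg}^{\infty})}$, hence the $j$-th term is $\lesssim 2^{j(1-s)}\|f\|_{S(H_{\vg}^{\infty})}$, bounded uniformly in $j$ precisely because $s\ge1$. For (ii), with $\gamma\ge1$ (so $s>\gamma\ge1$): the $j$-th term of $\|H_\mu f\|_{s(H_{\vg}^{\infty})}$ is $\cong 2^{-j\gamma}\sum_{n=2^j}^{2^{j+1}-1}|b_n|$, and $\sum_{n=2^j}^{2^{j+1}-1}|b_n|\lesssim\sum_k|a_k|\sum_{n=2^j}^{2^{j+1}-1}(n+k+1)^{-s}\lesssim 2^j\sum_k(2^j+k+1)^{-s}|a_k|$; splitting at $k=2^j$ and using $\sum_{k=2^i}^{2^{i+1}-1}|a_k|\lesssim 2^{i\gamma}\|f\|_{s(H_{\vg}^{\infty})}$ gives $\sum_{n=2^j}^{2^{j+1}-1}|b_n|\lesssim 2^{j(\gamma+1-s)}\|f\|_{s(H_{\vg}^{\infty})}$, so the $j$-th term is $\lesssim 2^{j(1-s)}\|f\|_{s(H_{\vg}^{\infty})}$, again bounded because $s\ge1$. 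In each case the uniform bound is the norm estimate $\|H_\mu f\|\lesssim\|f\|$, which is exactly continuity.

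\emph{Main obstacle.} The delicate point is the exponent bookkeeping in the last step: one must notice that the right uniform bound for $\sum_{n=2^j}^{2^{j+1}-1}(n+k+1)^{-s}$ is $\cong 2^j(2^j+k+1)^{-s}$ for \emph{all} $k$, and that after splitting at $k\cong 2^j$ every geometric series closes, leaving a single factor $2^{j(1-s)}$; this is harmless exactly under the stated hypotheses $\gamma\ge\tfrac12$ (resp. $\gamma\ge1$), which force $s>1$. By contrast, the convergence of $\sum_n\mu_n|a_n|$ — hence the identification $H_\mu=I_\mu$ and well-definedness into $H(\D)$ — costs only $s>\gamma+1/2$ (resp. $s>\gamma$).
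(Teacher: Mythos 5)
Your proof is correct. The first halves of (i) and (ii) — summability of $\sum_n\mu_n|a_n|$ via Cauchy--Schwarz on dyadic blocks, the identification $H_\mu=I_\mu$ through Proposition~\ref{HI}, and continuity into $(H(\D),\tau_0)$ — coincide with the paper's argument (the paper routes continuity through Proposition~\ref{BenDefinitHdGeneralX} with the majorant $g(t)=\sum_n|a_n|t^n$, while you give the equivalent direct estimate $|H_\mu f(z)|\le(1-|z|)^{-1}\sum_k\mu_k|a_k|$). Where you genuinely diverge is in showing that $H_\mu$ maps the space into itself. The paper uses only the monotonicity of the moments: $|b_m|=|\sum_k\mu_{k+m}a_k|\le\sum_k\mu_k|a_k|=:M$ uniformly in $m$, so the $n$-th block of the hull (resp.\ core) norm is at most $2^{n/2}M/(2^n+1)^{\gamma}\le M2^{n(1/2-\gamma)}$ (resp.\ $M2^{n(1-\gamma)}$), which is exactly where $\gamma\ge\tfrac12$ (resp.\ $\gamma\ge1$) enters; continuity then follows from the closed graph theorem. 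You instead keep the $s$-Carleson decay $\mu_{n+k}\lesssim(n+k+1)^{-s}$ inside each block and obtain the sharper bound $2^{j(1-s)}\|f\|$ for the $j$-th block, using $\gamma\ge\tfrac12$ (resp.\ $\gamma\ge1$) only through the implication $s>1$. Your route costs an extra splitting argument at $k\cong2^j$ but buys more: the block norms of $H_\mu f$ actually decay geometrically (which would, for instance, feed a compactness argument), and the mapping-into-itself conclusion holds under the weaker hypothesis $s>\max(\gamma+\tfrac12,1)$ (resp.\ $s>\max(\gamma,1)$) rather than the stated $\gamma\ge\tfrac12$ (resp.\ $\gamma\ge1$). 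The paper's one-line bound is the minimal argument for the statement as given.
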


\begin{proof} (i)   Given $f(z)=\sum_{m=0}^{\infty}a_mz^m\in   S(H_{v_{\gamma}}),$  by Proposition \ref{Solid}  there exists $C>0$ such that $\displaystyle\left(\sum_{m=2^n}^{2^{n+1}-1}\frac{|a_m|^2}{(m+1)^{2\gamma}}\right)^{1/2}<C \text{ for every } n\in\N_0.$ As $\|\ \|_1\leq d^{1/2}\|\ \|_2$   in $\C^d,$ with $d=2^n$, we get
	$$\displaystyle\sum_{m=2^n}^{2^{n+1}-1}\frac{|a_m|}{(m+1)^{\gamma}}\leq 2^{n/2}C.$$
	Therefore, for every $\delta>1/2$,
	\begin{equation}
	\label{hullestimate}
	\sum_{n=0}^{\infty}\sum_{m=2^n}^{2^{n+1}-1} \frac{|a_m|}{(m+1)^{\gamma+\delta}}\leq
\sum_{n=0}^{\infty}  \frac{1}{2^{n \delta}} \sum_{m=2^n}^{2^{n+1}-1} \frac{|a_m|}{(m+1)^{\gamma}} \leq
C\sum_{n=0}^{\infty}\frac{1}{2^{n(\delta-1/2)}}<\infty.
	\end{equation}
	
	\noindent Hence, if $\mu$ is an $s$-Carleson measure for $s>\gamma+1/2,$ then  $\sum_{n=0}^{\infty}\mu_{n}|a_n|<\infty$. We conclude by Proposition \ref{HI} and Proposition \ref{BenDefinitHdGeneralX}  that $H_{\mu}:S(H_{\vg}^{\infty})\to H(\D)$ is a well defined continuous operator and $H_\mu=I_\mu$ on $S(H_{\vg}^{\infty})$. Indeed, to apply Proposition \ref{BenDefinitHdGeneralX} take
	$g(t)=\sum_{n=0}^{\infty} |a_n|t^n$ for $f(z)=\sum_{n=0}^{\infty} a_nz^n\in S(H_{\vg}^{\infty})$. \eqref{hullestimate} gives, for  $M>0$ such that $\mu_n\leq \frac{M}{(n+1)^s}$, $n\in \N_0,$
	
	$$\int_0^1g(t)d\mu(t)=\sum_{n=0}^{\infty} \mu_n |a_n| \leq M\sum_{n=0}^{\infty}\frac{1}{2^{n(s-\gamma-1/2)}} \|f\|_{S(H_{\vg}^{\infty})}.$$
	
In this case,	let us see   that  $H_{\mu}(S(H_{\vg}^{\infty}))\subseteq S(H_{\vg}^{\infty})$ if $\gamma\geq 1/2.$ Let $M=\sum_{n=0}^{\infty}\mu_n|a_n|$ for $f(z)=\sum_{n=0}^{\infty} a_nz^n\in S(H_{\vg}^{\infty})$. For every $n\in \N_0$ we get
	$$\sum_{m=2^n}^{2^{n+1}-1}\frac{|\sum_{k=0}^{\infty}\mu_{k+m}a_k|^2}{(m+1)^{2\gamma}}\leq   2^n\frac{M^2}{(2^n+1)^{2\gamma}}\leq M^2 2^{n(1-2\gamma)}\leq M^2.$$
	The continuity holds by  the closed graph theorem, since $S(H_{\vg}^{\infty})\hookrightarrow H(\D)$ continuously.	
	
	\noindent
(ii) Removing the factor $2^{n/2}$ in the proof of (i),  we get that 	if $\mu$ is an $s$-Carleson measure for $s>\gamma,$ then there exists $M>0$ such that $\sum_{k=0}^{\infty}\mu_{n+k}|a_k|<M$ for every $n\in \N_0,$ thus,  $H_{\mu}(f)(z)=\sum_{n=0}^{\infty}\left(\sum_{k=0}^{\infty}\mu_{n+k}a_k\right)z^n \in H(\D)$ and  $H_{\mu}=I_{\mu}:s(H_{\vg}^{\infty})\to H(\D)$ is continuous. Let us see that  $H_{\mu}(s(H_{\vg}^{\infty}))\subseteq s(H_{\vg}^{\infty})$ if $\gamma\geq 1$.  Indeed, for  every $n\in \N_0,$
	$$\sum_{m=2^n}^{2^{n+1}-1}\frac{|\sum_{k=0}^{\infty}\mu_{k+m}a_k|}{(m+1)^{\gamma}}\leq   2^n\frac{M}{(2^n+1)^{\gamma}}\leq M 2^{n(1-\gamma)}\leq M.$$
The continuity holds again by the closed graph theorem.
\end{proof}

\section{Hilbert-type operators associated to summable moment measures}

In this section we deal with positive finite Borel measures $\mu$  on $[0,1)$ such that $\sum_{n=0}^{\infty} \mu_n<\infty$. This condition implies  $\mu$ is  a vanishing Carleson measure \cite[Theorem 3.3.1]{Knopp}.

 Proceeding as in the proof of \cite[Theorem 7]{BBJCesaro},  we obtain that this condition gives the equivalence of the boundedness and compactness of the Hilbert-type operator on  $H^\infty$.	 We include the proof for the sake of completeness.

 \begin{theorem}\label{sum_mu_n_implica_compact}
 	Let $\mu$ be a positive finite Borel measure on $[0,1)$ such that  $\int_{0}^{1}\frac{d\mu(t)}{1-t}<\infty$, i.e. $\sum_{n=0}^{\infty}\mu_n<\infty$.	Then  $I_\mu: H^\infty\rightarrow H^\infty$  is compact and $H_{\mu}=I_{\mu}.$	
 \end{theorem}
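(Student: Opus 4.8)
The plan is to dispatch the three assertions — that $I_\mu$ sends $H^\infty$ into $H^\infty$, that it is compact there, and that it agrees with $H_\mu$ — by first recording two elementary boundedness estimates and then proving compactness via a normal-families argument together with a tail estimate on $\mu$. Since $H^\infty\hookrightarrow(H(\D),\tau_0)$ continuously, for $f(z)=\sum_{n=0}^\infty a_nz^n\in H^\infty$ the Cauchy estimates give $|a_n|\le\|f\|_{H^\infty}$ for every $n$, hence $\sum_{n=0}^\infty\mu_n|a_n|\le\|f\|_{H^\infty}\sum_{n=0}^\infty\mu_n<\infty$. Thus Proposition \ref{HI} applies with $X=H^\infty$ and yields that $H_\mu,I_\mu\colon H^\infty\to H(\D)$ are well defined and $H_\mu=I_\mu$ on $H^\infty$. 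That $I_\mu(H^\infty)\subseteq H^\infty$ is then immediate: using $|1-tz|\ge 1-t|z|\ge 1-t$ for $t\in[0,1)$ and $z\in\D$,
$$\sup_{z\in\D}|I_\mu(f)(z)|\le\|f\|_{H^\infty}\int_0^1\frac{d\mu(t)}{1-t}=\Big(\sum_{n=0}^\infty\mu_n\Big)\|f\|_{H^\infty},$$
so $I_\mu\in\mathcal{L}(H^\infty)$ with norm at most $\sum_n\mu_n$.

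For compactness I will show that the image under $I_\mu$ of the closed unit ball of $H^\infty$ is relatively compact in $H^\infty$, arguing sequentially. Let $(f_j)_j\subseteq H^\infty$ with $\|f_j\|_{H^\infty}\le 1$. By Montel's theorem some subsequence, which I do not relabel, converges uniformly on compact subsets of $\D$ to a function $f\in H(\D)$; letting $j\to\infty$ in $|f_j(z)|\le 1$ shows $f\in H^\infty$ with $\|f\|_{H^\infty}\le 1$. The claim is that $\|I_\mu(f_j)-I_\mu(f)\|_{H^\infty}\to 0$. Fix $\eps>0$ and, using $\int_0^1(1-t)^{-1}d\mu(t)<\infty$, choose $r_0\in(0,1)$ with $\int_{r_0}^1(1-t)^{-1}d\mu(t)<\eps$. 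Splitting the integral defining $I_\mu(f_j)(z)-I_\mu(f)(z)$ at $r_0$: on the tail, since $\|f_j\|_{H^\infty},\|f\|_{H^\infty}\le 1$ and $|1-tz|\ge 1-t$,
$$\sup_{z\in\D}\Big|\int_{r_0}^1\frac{f_j(t)-f(t)}{1-tz}\,d\mu(t)\Big|\le 2\int_{r_0}^1\frac{d\mu(t)}{1-t}<2\eps\qquad\text{for all }j,$$
while on $[0,r_0]$ one has $|1-tz|\ge 1-r_0$, so
$$\sup_{z\in\D}\Big|\int_0^{r_0}\frac{f_j(t)-f(t)}{1-tz}\,d\mu(t)\Big|\le\frac{\mu([0,1))}{1-r_0}\,\sup_{t\in[0,r_0]}|f_j(t)-f(t)|,$$
and the right-hand side tends to $0$ as $j\to\infty$ because $[0,r_0]$ is a compact subset of $\D$. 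Combining the two bounds gives $\limsup_j\|I_\mu(f_j)-I_\mu(f)\|_{H^\infty}\le 2\eps$; as $\eps>0$ was arbitrary, $I_\mu(f_j)\to I_\mu(f)$ in $H^\infty$. Hence every bounded sequence in $H^\infty$ has a subsequence whose $I_\mu$-image converges in $H^\infty$, i.e. $I_\mu\colon H^\infty\to H^\infty$ is compact.

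All the computations are elementary; the only point requiring care is that the tail estimate $\int_{r_0}^1(1-t)^{-1}d\mu(t)<\eps$ must be made uniform in both the disc variable $z$ and the index $j$, which is exactly what the inequality $|1-tz|\ge 1-t$ together with the uniform bound $\sup_j\|f_j\|_{H^\infty}\le 1$ provides. The splitting of the integral at a radius $r_0$ close to $1$ is precisely the device used for the Cesàro-type operator in \cite[Theorem 7]{BBJCesaro}, which is why the authors say the argument proceeds as there.
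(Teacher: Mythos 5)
Your proposal is correct and follows essentially the same route as the paper: the compactness is obtained by splitting the integral $\int_0^1\frac{f(t)}{1-tz}\,d\mu(t)$ at a radius $r_0$ close to $1$, controlling the tail uniformly in $z$ via $|1-tz|\ge 1-t$ and the hypothesis $\int_0^1\frac{d\mu(t)}{1-t}<\infty$, and the head via uniform convergence on the compact set $[0,r_0]$. The only difference is that you make the argument self-contained — deriving $H_\mu=I_\mu$ and the boundedness from Proposition \ref{HI} and the Cauchy estimates, and replacing the cited compactness criterion of \cite[Lemma 6]{BBJCesaro} by an explicit Montel extraction — whereas the paper delegates these steps to \cite{GirelaMerchan} and \cite{BBJCesaro}.
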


 \begin{proof}
 	Let us assume without loss of generality that $\mu([0,1))=1$. By   \cite[Theorem 1.2]{GirelaMerchan}, $I_{\mu}$ and $H_{\mu}$ are continuous on $H^\infty$ and $H_{\mu}=I_{\mu}.$ By the proof of \cite[Lemma 6]{BBJCesaro} we only need to show that $(\|I_\mu(f_n)\|_\infty)_n$ is convergent to 0 whenever $(f_n)_n\subseteq B_{H^\infty}$ is convergent to 0 in $\tau_0$. Let $\eps>0$. By  hypothesis, there is $s_0\in (0,1)$ such that
 	\begin{equation}
 		\label{p1}
 		\int_{s_0}^{1}\frac{d\mu(t)}{1-t}<\frac{\eps}{2}.
 	\end{equation}
 	We have that
 	\begin{equation}
 		\label{p2}
 		|I_\mu(f)(z)|=\left|\int_0^1\frac{f(t)}{1-tz}d\mu(t)\right|
 	\end{equation}
 	\noindent  for all $f\in H^\infty$, $z\in \D$. For $z\in \D$, $t\in [s_0,1]$ and $f\in B_{H^\infty}$ we have $|\frac{f(t)}{1-tz} |\leq \frac{1}{1-t}$. Hence, by \eqref{p1}, for each $n\in\N$ we have
 	
 	\begin{equation}
 		\label{p3}
 		\left|\int_{s_0}^1\frac{f_n(t)}{1-tz}d\mu(t)\right|<\frac{\eps}{2}.
 	\end{equation}
 	Take $n_0\in \N$ such that, for every $n\geq n_0,$  $|f_n(\omega)|<\frac{\eps(1-s_0)}{2}$ for every $|\omega|\leq s_0.$	 For $z\in \D$ and $n\geq n_0$ we also have
 	
 	\begin{equation}
 		\label{p4}
 		\left|\int_0^{s_0}\frac{f_n(t)}{1-tz}d\mu(t)\right|\leq\int_0^{s_0}\frac{\max_{\omega\in s_0\overline{\D}}|f_n(\omega)|}{1-s_0}d\mu(t)< \frac{\eps}{2}.
 	\end{equation}
 	Putting together \eqref{p2}, \eqref{p3} and \eqref{p4} we conclude that $\|H_\mu(f_n)\|_{\infty}<\eps$ for $n\geq n_0$, and so, $H_\mu: H^\infty\rightarrow H^\infty$,   is compact.	
 \end{proof}

 Theorem \ref{sum_mu_n_implica_compact} permits us to complement \cite[Theorem 1.2]{GirelaMerchan} with more equivalent conditions.

 \begin{theorem}\label{Cont_Hinfty}
 	Let $\mu$ be a positive finite Borel measure on $[0,1).$ The following conditions are equivalent:
 	\begin{itemize}
 		\item[(i)] $H_\mu: H^\infty \rightarrow H^\infty$ is continuous.
 		\item[(ii)] $H_\mu: H^\infty \rightarrow H^\infty$ is compact.
 		\item[(iii)] $\int_{0}^{1}\frac{d\mu(t)}{1-t}<\infty.$
 		\item[(iv)] $\sum_{n=0}^{\infty}\mu_n<\infty.$
		\item[(v)]$H_\mu: A(\D)\rightarrow A(\D)$ is continuous.
 		\item[(vi)] $H_\mu: A (\D)\rightarrow A(\D)$ is compact. 	
		\end{itemize}
 	If the assertions hold, then we get $H_{\mu}=I_{\mu}$ on $H^\infty$.
 \end{theorem}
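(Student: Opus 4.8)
The theorem asks for a circle of equivalences among six conditions concerning $H_\mu$ acting on $H^\infty$ and on the disc algebra $A(\mathbb{D})$. The strategy is to route everything through the two already–available engines: Theorem~\ref{sum_mu_n_implica_compact} (summability $\Rightarrow$ compactness on $H^\infty$, with $H_\mu = I_\mu$) and the cited result \cite[Theorem 1.2]{GirelaMerchan} (continuity of $H_\mu$ on $H^\infty$ $\Leftrightarrow$ summability of the moments, with $H_\mu = I_\mu$). Concretely, I would prove the cycle
\[
\mathrm{(iii)} \Leftrightarrow \mathrm{(iv)} \Rightarrow \mathrm{(ii)} \Rightarrow \mathrm{(i)} \Rightarrow \mathrm{(iii)}, \qquad
\mathrm{(ii)} \Rightarrow \mathrm{(vi)} \Rightarrow \mathrm{(v)} \Rightarrow \mathrm{(i)},
\]
which closes the loop through all six. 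The equivalence (iii)$\Leftrightarrow$(iv) is the identity $\int_0^1 \frac{d\mu(t)}{1-t} = \sum_{n=0}^\infty \mu_n$ obtained by expanding $\frac{1}{1-t} = \sum_n t^n$ and applying the monotone convergence theorem, exactly as noted at the start of Section~5. The implication (iv)$\Rightarrow$(ii) (and simultaneously $H_\mu = I_\mu$ on $H^\infty$) is precisely Theorem~\ref{sum_mu_n_implica_compact}. The implication (ii)$\Rightarrow$(i) is trivial, and (i)$\Rightarrow$(iii) is the nontrivial direction of \cite[Theorem 1.2]{GirelaMerchan}.

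**The disc algebra part.** It remains to weave in $A(\mathbb{D})$. For (ii)$\Rightarrow$(vi): assuming the moments are summable, Theorem~\ref{sum_mu_n_implica_compact} gives $H_\mu = I_\mu$ compact on $H^\infty$; I would check that $I_\mu$ maps $A(\mathbb{D})$ into $A(\mathbb{D})$ and that the restriction is compact. The inclusion follows because $I_\mu(f)(z) = \int_0^1 \frac{f(t)}{1-tz}\,d\mu(t)$ extends continuously to $\overline{\mathbb{D}}$ whenever $f \in A(\mathbb{D})$: for $|z| \le 1$ the kernel $\frac{1}{1-tz}$ stays bounded by $\frac{1}{1-t}$, which is $\mu$-integrable by (iii), so dominated convergence makes $z \mapsto I_\mu(f)(z)$ continuous on $\overline{\mathbb{D}}$; alternatively, $I_\mu(p) \in A(\mathbb{D})$ for polynomials $p$, the polynomials are dense in $A(\mathbb{D})$, and $A(\mathbb{D})$ is closed in $H^\infty$, so boundedness of $I_\mu$ on $H^\infty$ forces $I_\mu(A(\mathbb{D})) \subseteq \overline{I_\mu(\mathcal{P})}^{\,\|\cdot\|_\infty} \subseteq A(\mathbb{D})$. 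Compactness of $H_\mu = I_\mu : A(\mathbb{D}) \to A(\mathbb{D})$ is then inherited from compactness on $H^\infty$ by restriction (a compact operator on $H^\infty$ restricted to the closed subspace $A(\mathbb{D})$, with values in $A(\mathbb{D})$, is compact as an operator between those spaces). The implication (vi)$\Rightarrow$(v) is trivial. For (v)$\Rightarrow$(i), I would run the argument that produced (i)$\Rightarrow$(iii) in \cite{GirelaMerchan} but with the testing functions already inside $A(\mathbb{D})$: evaluating at $z=0$, $H_\mu(f)(0) = \sum_n \mu_n a_n$ must converge for every $f = \sum a_n z^n \in A(\mathbb{D})$, and choosing suitable unimodular-coefficient lacunary or Rudin–Shapiro-type polynomials (which lie in $A(\mathbb{D})$ with bounded sup-norm) forces $\sum_n \mu_n < \infty$, i.e. (iv), hence (i) via the already-established chain. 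Finally, the closing sentence $H_\mu = I_\mu$ on $H^\infty$ is part of Theorem~\ref{sum_mu_n_implica_compact} and needs no separate argument.

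**Main obstacle.** The only genuinely delicate point is (v)$\Rightarrow$(i): showing that mere \emph{continuity on the disc algebra} already forces summability. One cannot simply quote \cite{GirelaMerchan} because that statement is about $H^\infty$, and a priori continuity on the smaller space $A(\mathbb{D})$ is a weaker hypothesis. The resolution is that the functions witnessing unboundedness when $\sum_n \mu_n = \infty$ can be taken to be \emph{polynomials} (or at least elements of $A(\mathbb{D})$): if $\sum_n \mu_n = \infty$, then for the partial-sum polynomials $p_N(z) = \sum_{n=0}^{N} z^n$ one has $\|p_N\|_{A(\mathbb{D})} = \|p_N\|_{H^\infty}$, yet — using that $(\mu_n)$ is decreasing — a lower bound of the form $\|H_\mu(p_N)\|_\infty \ge |H_\mu(p_N)(0)| = \sum_{n=0}^{N}\mu_n \to \infty$ while $\|p_N\|_\infty$ grows only like $\log N$; comparing these (and replacing $p_N$ by a normalized version, or by a lacunary polynomial with $\|\cdot\|_\infty$ bounded) contradicts boundedness on $A(\mathbb{D})$. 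I would either import this estimate from the proof of \cite[Theorem 1.2]{GirelaMerchan} verbatim, observing that its extremal functions are in $A(\mathbb{D})$, or reproduce the short counting argument; I expect the former suffices and keeps the proof to a few lines.
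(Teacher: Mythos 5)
Your overall architecture coincides with the paper's: (iii)$\Leftrightarrow$(iv) by expanding $\frac{1}{1-t}$ and using monotone convergence, (iv)$\Rightarrow$(ii) via Theorem \ref{sum_mu_n_implica_compact}, (ii)$\Rightarrow$(i) trivially, (i)$\Rightarrow$(iii) quoted from \cite{GirelaMerchan}, and the passage to $A(\D)$ through $H_\mu(z^k)=\sum_{n}\mu_{n+k}z^n\in A(\D)$, density of the polynomials, and restriction of a compact operator to a closed invariant subspace. All of that is correct and matches the paper.

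The one step you actually write out in detail, (v)$\Rightarrow$(i), fails as stated. For the partial sums $p_N(z)=\sum_{n=0}^{N}z^n$ one has $\|p_N\|_{A(\D)}=p_N(1)=N+1$, not $O(\log N)$; the logarithmic growth you are recalling is that of the $L^1(\T)$-norm of the Dirichlet kernel (the Lebesgue constants), which is irrelevant for the sup-norm. Since $(\mu_n)_n$ is bounded, $\sum_{n=0}^{N}\mu_n\leq (N+1)\mu_0$, so comparing $|H_\mu(p_N)(0)|=\sum_{n=0}^{N}\mu_n$ with $\|p_N\|_\infty=N+1$ yields no contradiction. Rudin--Shapiro polynomials do not help either: they put unimodular signs on the coefficients, so $H_\mu(f)(0)$ becomes a signed sum that no longer dominates $\sum_n\mu_n$. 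The repair is far simpler and is exactly what the paper does: test on the constant function $1\in A(\D)$. One has $H_\mu(1)(z)=\sum_{n=0}^{\infty}\mu_n z^n$, and if this function belongs to $A(\D)$ (indeed, already if it belongs to $H^\infty$), then $\sup_{0<r<1}\sum_{n=0}^{\infty}\mu_n r^n<\infty$, which by monotone convergence of the non-negative terms gives $\sum_{n=0}^{\infty}\mu_n<\infty$, i.e.\ (iv); equivalently, the boundary value $H_\mu(1)(1)$ equals $\sum_{n=0}^{\infty}\mu_n$. This closes the cycle as (v)$\Rightarrow$(iv) in one line. Your fallback of importing the extremal functions from \cite{GirelaMerchan} would in effect land on this same test function, but the explicit argument you give in its place does not work.
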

 \begin{proof}
 The equivalence between (i), (iii) and (iv) is given in  \cite[Proof of Theorem 1.2]{GirelaMerchan}. Theorem \ref{sum_mu_n_implica_compact} yields that (iv) implies (ii), and, since (ii) implies (i) we have the equivalence of the first four statements. Since $A(\D)$ is a subspace of $H^\infty$, we get that (i) implies (v) if we show   that (iv) yields $H_\mu(A(\D))\subseteq A(\D)$. For each $k\in\N_0$, $H_\mu(z^k)=\sum_{n=0}^{\infty}\mu_{n+k}z^n\in A(\D)$. We conclude since the polynomials $\mathcal{P}$ are dense in $A(\D)$. Finally, (v) implies (iv) since $H_\mu(1)(1)=\sum_{n=0}^{\infty}\mu_n$. Trivially, (vi) implies (v), and (v) and (ii) imply (vi).
 \end{proof}

 In what follows, we  study the compactness and nuclearity of the Hilbert-type operator on different spaces of holomorphic functions on which the operator is well-defined under the assumption $\sum_{n=0}^{\infty}\mu_n<\infty.$ We recall that given two Banach spaces $X$ and $Y$, we say that an operator $T:X\rightarrow Y$ is \emph{nuclear} if there exist $(x_n^*)_n\subseteq X^*$ and $(y_n)_n\subseteq Y$ such that $\sum_{n=1}^{\infty} \|x_n^*\|\|y_n\|<\infty$ and $T=\sum_{n=1}^{\infty}x_n^*\otimes y_n= \sum_{n=1}^{\infty}x_n^*(\cdot) y_n.$ Nuclear operators are compact.

In the rest of this section, we consider the following general spaces of analytic functions:  
$$
\ell^p_A:= \{f(z)=\sum_{n=0}^{\infty}a_nz^n\in H(\D): \ \|f\|^p_p:=\sum_{n=0}^{\infty}|a_n|^p<\infty\} \text{ for $1 \leq p < \infty.$}
$$
$$
\ell^\infty_A:= \{f(z)=\sum_{n=0}^{\infty}a_nz^n\in H(\D): \ \|f\|_\infty:=\sup_n|a_n|<\infty\}. 
$$

These spaces have been thoroughly investigated in \cite{cheng}. The most relevant examples are the Wiener algebra (for $p=1$) $\ell^1_A=A(\T)$, and (for $p=2$) the classical  Hardy space $\ell^2_A=H^2$.  In the rest of the paper we identify a function $f(z)=\sum_{n=0}^{\infty}a_nz^n\in \ell^p_A,$ $1 \leq p \leq  \infty,$ with its coefficients $(a_n)_n$.

 \begin{proposition}
Let $\mu$ be a positive finite Borel measure on $[0,1)$ such that $\sum_{n=0}^{\infty} \mu_n<\infty$. Then $H_\mu:\ell^\infty_A\to \ell^\infty_A$ is well defined and $H_\mu=I_\mu$ on $\ell^\infty_A$.
\end{proposition}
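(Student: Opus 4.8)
The plan is to verify the two assertions in order: first that $H_\mu:\ell^\infty_A\to\ell^\infty_A$ is well defined, and then that $H_\mu=I_\mu$ on $\ell^\infty_A$. For a function $f(z)=\sum_{n=0}^\infty a_nz^n\in\ell^\infty_A$ we have $|a_k|\le\|f\|_\infty$ for all $k$. Since the moments $(\mu_n)_n$ are non-negative and the hypothesis gives $\sum_{n=0}^\infty\mu_n<\infty$, the inner sums satisfy
$$
\left|\sum_{k=0}^\infty\mu_{n+k}a_k\right|\le\|f\|_\infty\sum_{k=0}^\infty\mu_{n+k}\le\|f\|_\infty\sum_{j=0}^\infty\mu_j,
$$
so each inner sum converges absolutely and the sequence of coefficients of $H_\mu(f)$ is bounded by $\|f\|_\infty\sum_{j}\mu_j$. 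This simultaneously shows $H_\mu(f)\in H(\D)$ (bounded Taylor coefficients) and that $H_\mu(f)\in\ell^\infty_A$ with $\|H_\mu(f)\|_\infty\le\left(\sum_j\mu_j\right)\|f\|_\infty$; in particular $H_\mu:\ell^\infty_A\to\ell^\infty_A$ is well defined and bounded.

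For the identity $H_\mu=I_\mu$ on $\ell^\infty_A$, the key observation is that the coefficientwise bound $\sum_{n=0}^\infty\mu_n|a_n|\le\|f\|_\infty\sum_{n=0}^\infty\mu_n<\infty$ holds for every $f(z)=\sum_n a_nz^n\in\ell^\infty_A$, i.e.\ the hypothesis of Proposition~\ref{HI} is satisfied with $X=\ell^\infty_A$ (one checks easily that $\ell^\infty_A\hookrightarrow(H(\D),\tau_0)$ continuously and contains the polynomials). Applying Proposition~\ref{HI} immediately yields that both $H_\mu:\ell^\infty_A\to H(\D)$ and $I_\mu:\ell^\infty_A\to H(\D)$ are well defined and coincide, which gives the claim.

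I do not expect any serious obstacle here: the result is essentially an immediate consequence of the monotonicity and summability of the moment sequence together with Proposition~\ref{HI}. The only point requiring a line of care is checking that $\ell^\infty_A$ is a Banach space of analytic functions in the sense of the paper, i.e.\ that convergence in $\|\cdot\|_\infty$ implies uniform convergence on compact subsets of $\D$ — this follows from $\sum_n|a_n|r^n\le\|f\|_\infty\sum_n r^n=\|f\|_\infty/(1-r)$ for $|z|\le r<1$ — so that Proposition~\ref{HI} is applicable. Once that is in place the proof is a two-line citation.
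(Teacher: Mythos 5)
Your proof is correct and follows essentially the same route as the paper: both arguments reduce everything to the estimate $\sum_{k}\mu_{n+k}|a_k|\le\|f\|_\infty\sum_{j}\mu_j$, which yields $\|H_\mu(f)\|_\infty\le\bigl(\sum_j\mu_j\bigr)\|f\|_\infty$, and then invoke the paper's general machinery for the identity $H_\mu=I_\mu$. The only cosmetic difference is that you cite Proposition~\ref{HI} (verifying its hypothesis $\sum_n\mu_n|a_n|<\infty$ directly), whereas the paper cites Proposition~\ref{BenDefinitHdGeneralX} with the dominating function $g(t)=\sum_n|a_n|t^n$; by monotone convergence $\int_0^1 g\,d\mu=\sum_n\mu_n|a_n|$, so the two verifications amount to the same computation.
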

 \begin{proof}
If $M=\sum_{n=0}^{\infty} \mu_n$, then it is immediate that $\|H_\mu((a_n)_n)\|_\infty\leq M\|(a_n)_n\|_\infty$ for all $(a_n)_n\in l_\infty$. $I_\mu: \ell^\infty_A\to H(\D)$ is well defined by Proposition \ref{BenDefinitHdGeneralX}. Indeed, for $f(z)=\sum_{n=0}^{\infty}a_n z^n$ with $(a_n)_{n}\in l_\infty$, we consider $g(z)=\sum_{n=0}^{\infty} |a_n|z^n $. Then $\int_{0}^{1} g(t)d\mu(t)\leq \|(a_n)\|_\infty \sum_{n=0}^{\infty} \mu_n$.
\end{proof}

	\begin{lemma}
		\label{nuclearcompact}
		Let $T:\ell^\infty_A\to \ell^\infty_A$, be given by $x\mapsto \sum_{k=0}^{\infty} \langle y_k,x\rangle e_k$, with $y_k\in \ell^1_A$ for each $k\in\N_0$ and $(\|y_k\|_1)_{k\in\N_0}$ bounded. Then, 	 for $1\leq p\leq  \infty$ and $p':=\frac{p}{p-1}$ if $1<p<\infty$, $p'=\infty$ if $p=1,$ and $p'=1$ if $p=\infty$, we have: 
		
		\begin{itemize}
			\item[(a)]  If $(\|y_k\|_{p'})_{k\in\N_0}\in l_1$ then $T(\ell^p_A)\subseteq \ell^1_A$, and the restriction $T: \ell^p_A\to \ell^1_A$ is a nuclear operator.
			\item[(b)]  If $(\|y_k\|_{p'})_{k\in\N_0}\in l_q,$ $q\neq \infty,$ then $T(\ell^p_A)\subseteq \ell^q_A$ and the restriction $T: \ell^p_A\to \ell^q_A$ is a compact operator.
		\end{itemize}
	\end{lemma}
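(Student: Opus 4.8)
The plan is to reduce everything to Hölder's inequality applied coordinatewise. Writing $y_k=(b^{(k)}_n)_n\in\ell^1_A$, I would first record that $\ell^1\subseteq\ell^{p'}$ with $\|\cdot\|_{p'}\leq\|\cdot\|_1$ for every $p'\in[1,\infty]$, so that for each $x=(a_n)_n\in\ell^p_A$ the pairing $\langle y_k,x\rangle=\sum_{n=0}^\infty b^{(k)}_na_n$ is absolutely convergent and
$$|\langle y_k,x\rangle|\leq\|y_k\|_{p'}\,\|x\|_p.$$
Hence each $\langle y_k,\cdot\rangle$ lies in $(\ell^p_A)^*$ with norm at most $\|y_k\|_{p'}$, and on $\ell^p_A$ the operator $T$ is given by the series $\sum_{k=0}^\infty\langle y_k,\cdot\rangle\otimes e_k$, where $e_k$ is the $k$-th unit vector, which has norm $1$ in every $\ell^r_A$. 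The hypothesis that $(\|y_k\|_1)_k$ is bounded is needed only so that $T$ makes sense on $\ell^\infty_A$ as in the statement; once we restrict to $\ell^p_A$, only the assumptions on $(\|y_k\|_{p'})_k$ are used.

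For (a), under $(\|y_k\|_{p'})_k\in\ell^1$, I would estimate for $x\in\ell^p_A$
$$\Big\|\,Tx-\sum_{k=0}^N\langle y_k,x\rangle e_k\,\Big\|_1=\sum_{k>N}|\langle y_k,x\rangle|\leq\|x\|_p\sum_{k>N}\|y_k\|_{p'},$$
and let $N\to\infty$. This shows simultaneously that $Tx\in\ell^1_A$, hence $T(\ell^p_A)\subseteq\ell^1_A$; that $T:\ell^p_A\to\ell^1_A$ is bounded; and that $\sum_k\langle y_k,\cdot\rangle\otimes e_k$ converges to $T$ in the operator norm of $\mathcal{L}(\ell^p_A,\ell^1_A)$. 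Since $\sum_k\|\langle y_k,\cdot\rangle\|_{(\ell^p_A)^*}\|e_k\|_1\leq\sum_k\|y_k\|_{p'}<\infty$, this is exactly a nuclear representation of $T:\ell^p_A\to\ell^1_A$ in the sense recalled before the lemma.

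For (b), under $(\|y_k\|_{p'})_k\in\ell^q$ with $q\neq\infty$, I would use
$$\|Tx\|_q^q=\sum_{k=0}^\infty|\langle y_k,x\rangle|^q\leq\|x\|_p^q\sum_{k=0}^\infty\|y_k\|_{p'}^q<\infty$$
to get $T(\ell^p_A)\subseteq\ell^q_A$ and boundedness of $T:\ell^p_A\to\ell^q_A$, and then obtain compactness by approximating $T$ in operator norm by the finite rank truncations $T_Nx:=\sum_{k=0}^N\langle y_k,x\rangle e_k$, using
$$\|(T-T_N)x\|_q^q=\sum_{k>N}|\langle y_k,x\rangle|^q\leq\|x\|_p^q\sum_{k>N}\|y_k\|_{p'}^q$$
together with $\sum_{k>N}\|y_k\|_{p'}^q\to 0$; thus $T$ is a norm limit of finite rank operators, hence compact.

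I do not expect a real obstacle: the argument is just Hölder plus dominated tails of convergent series. The single point deserving care is the verification that the formal series $\sum_k\langle y_k,\cdot\rangle\otimes e_k$ converges to $T$ in the appropriate operator norm — this is what makes the representation in (a) a genuine nuclear decomposition and what legitimizes the finite rank approximation in (b) — and it follows at once from the $\ell^1$- (respectively $\ell^q$-) summability of $(\|y_k\|_{p'})_k$.
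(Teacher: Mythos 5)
Your proof is correct and takes essentially the same route as the paper's: Hölder/duality gives $\|\langle y_k,\cdot\rangle\|_{(\ell^p_A)^*}\leq\|y_k\|_{p'}$, part (a) is then read off from the definition of a nuclear operator, and part (b) uses the same finite-rank truncations with the same tail estimate $\bigl(\sum_{k>N}\|y_k\|_{p'}^q\bigr)^{1/q}$. You simply spell out the operator-norm convergence of $\sum_k\langle y_k,\cdot\rangle\otimes e_k$ in more detail than the paper does.
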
	
	\begin{proof}
 Observe that $(\ell^p_A)^*=\ell^{p'}_A$ if $1\leq p<\infty$ and $\ell^1_A\subseteq  (\ell^\infty_A)^*$ for the case $p=\infty$.
		Assertion (a) follows from the very definition of a nuclear operator. Under the assumptions of (b), it follows immediately that $T(\ell^p_A)\subseteq \ell^q_A$. Moreover, for the restriction $T: \ell^p_A\rightarrow \ell^q_A$ we have $T=\|\cdot\|-\lim_n\sum_{k=0}^{n} \langle y_k,x\rangle e_k$, which certainly implies the compactness of $T$. Actually, if  $(y_k)_k$ is a sequence  in $\ell^{p'}_A$ such that $(\|y_k\|_{p'})_k\in l_q$,  then for each $x\in B_{\ell^{p}_A}$ we get
		$$\left\|T(x)-\sum_{k=0}^{k_0}\langle y_k,x\rangle e_k\right\|_q\leq \left(\sum_{k=k_0+1}^{\infty}\|y_k\|_{p'}^q\right)^\frac{1}{q}.$$
		
	\end{proof}
	
	For a non increasing sequence $(\mu_n)_{n\in\N_0}$ of positive numbers such that $\sum_{n=0}^{\infty}\mu_n<\infty$, we consider the Hilbert and Ces\`aro type operators defined on $\ell^\infty_A$, i.e. $H_\mu: \ell^\infty_A\to \ell^\infty_A$, $H_\mu((a_n)_{n\in\N_0})=(\sum_{n=0}^{\infty}\mu_{n+k}a_n)_{k\in\N_0}$, $C_\mu: \ell^\infty_A\to \ell^\infty_A$, $C_\mu((a_n)_{n\in\N_0})=(\mu_{k}\sum_{n=0}^{k}a_n)_{k\in\N_0}$. These operators are well defined. We see that both satisfy the hypothesis of Lemma \ref{nuclearcompact}.  In fact, taking $y_k^h=(\mu_{n+k})_{n\in\N_0}$ for $H_\mu$ and $y^c_k=(a^k_j)_{j\in\N_0}$, where $a^k_j=\mu_k$ if $j\leq k$ and $a_j^k=0$ for $j>k$, we get
	
	$$\|y_k^h\|_1\leq \sum_{n=0}^{\infty} \mu_n \quad \text{for all } k\in\N_0$$
	
	\noindent and
	
	$$\|y_k^c\|_1\leq \sup\{(n+1)\mu_n: \ n\in\N_0 \}<\infty  \quad  \text{for all } k\in\N_0.$$
	
	\noindent For the last estimate we have used the fact that when $(\mu_n)_{n\in\N_0}$ is a decreasing sequence of positive numbers such that $\sum_{n=0}^{\infty} \mu_n<\infty,$ then $\lim_n n\mu_n=0$.
	
	\begin{proposition}\label{wienner}
Let $(\mu_n)_{n\in\N_0}$  be a non increasing sequence of positive numbers such that $\sum_{n=0}^{\infty}\mu_n<\infty$. The restrictions 		$H_\mu^1: \ell^1_A \to \ell^1_A$ and $C_\mu^1:\ell^1_A\to \ell^1_A$ of the Hilbert and Ces\`aro type operators are  nuclear operators. Moreover, the operator $H_\mu: \ell^\infty_A \to \ell^\infty_A$ satisfies $H_\mu=(H_\mu^1)^*$, and it is also a compact operator.
	\end{proposition}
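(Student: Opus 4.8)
The plan is to verify that both $H_\mu^1$ and $C_\mu^1$ satisfy the hypotheses of Lemma \ref{nuclearcompact}(a) with $p=1$, so that $p'=\infty$, and then to handle the duality statement separately. For the nuclearity of $H_\mu^1$, I would take $y_k^h=(\mu_{n+k})_{n\in\N_0}$ as above and check that $(\|y_k^h\|_\infty)_{k\in\N_0}\in\ell^1$. Since $(\mu_n)_n$ is non-increasing, $\|y_k^h\|_\infty=\sup_n\mu_{n+k}=\mu_k$, and $\sum_{k=0}^\infty\mu_k<\infty$ by hypothesis; hence Lemma \ref{nuclearcompact}(a) applies and $H_\mu^1:\ell^1_A\to\ell^1_A$ is nuclear. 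For $C_\mu^1$, take $y_k^c=(a^k_j)_{j\in\N_0}$ with $a^k_j=\mu_k$ for $j\le k$ and $0$ otherwise; then $\|y_k^c\|_\infty=\mu_k$, so again $(\|y_k^c\|_\infty)_k\in\ell^1$ and Lemma \ref{nuclearcompact}(a) gives nuclearity of $C_\mu^1:\ell^1_A\to\ell^1_A$.

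Next I would establish $H_\mu=(H_\mu^1)^*$. Using the identification $(\ell^1_A)^*=\ell^\infty_A$ via the dual pairing $\langle(a_n)_n,(b_n)_n\rangle=\sum_n a_nb_n$, it suffices to compute, for $x=(a_n)_n\in\ell^1_A$ and $w=(w_k)_k\in\ell^\infty_A$,
\begin{equation*}
\langle H_\mu^1 x, w\rangle=\sum_{k=0}^\infty\Bigl(\sum_{n=0}^\infty\mu_{n+k}a_n\Bigr)w_k
=\sum_{n=0}^\infty a_n\Bigl(\sum_{k=0}^\infty\mu_{n+k}w_k\Bigr)=\langle x, H_\mu w\rangle,
\end{equation*}
where the interchange of summation is justified by absolute convergence: $\sum_{k,n}\mu_{n+k}|a_n||w_k|\le\|w\|_\infty\sum_n|a_n|\sum_k\mu_k<\infty$ since $(\mu_n)_n$ is non-increasing and summable. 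This shows $(H_\mu^1)^*$ acts as $H_\mu$ on $\ell^\infty_A$, i.e. $H_\mu=(H_\mu^1)^*$.

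Finally, the compactness of $H_\mu:\ell^\infty_A\to\ell^\infty_A$ follows because it is the adjoint of a nuclear (hence compact) operator, and the adjoint of a compact operator between Banach spaces is compact (Schauder's theorem). Alternatively, and more self-contained, I would invoke Lemma \ref{nuclearcompact}(b) directly with $p=\infty$, $p'=1$, $q=\infty$: the sequence $y_k^h\in\ell^1_A$ satisfies $(\|y_k^h\|_1)_k=(\sum_n\mu_{n+k})_k$ bounded, which is the running hypothesis of Lemma \ref{nuclearcompact}, and then compactness of $H_\mu:\ell^\infty_A\to\ell^\infty_A$ is the case $q=\infty$ of part (b) — but since that part excludes $q=\infty$, the cleaner route is the Schauder adjoint argument. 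I expect the only subtle point to be the rigorous justification of the summation interchange in the duality computation and the bookkeeping that $\|y_k^h\|_\infty=\mu_k$ rather than $\mu_0$; everything else is a direct application of the preceding lemma.
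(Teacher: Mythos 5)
Your proposal is correct and follows essentially the same route as the paper: nuclearity of $H_\mu^1$ and $C_\mu^1$ from Lemma \ref{nuclearcompact}(a) with $p=1$, $p'=\infty$ and $\|y_k^h\|_\infty=\|y_k^c\|_\infty=\mu_k$, the identification $H_\mu=(H_\mu^1)^*$ (the paper checks it on the basis vectors plus $w^*$-$w^*$ continuity, you do the equivalent Fubini computation), and compactness of $H_\mu$ via Schauder's theorem. Your remark that Lemma \ref{nuclearcompact}(b) does not cover $q=\infty$, so the adjoint argument is the right one, is a correct and worthwhile observation.
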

	
	\begin{proof}
		The result for the restrictions follows from Lemma \ref{nuclearcompact} (a) for $p=1$, $p'=\infty$, and the equalities $\|y_k^h\|_\infty=\| y_k^c\|_\infty=\mu_{k},$ $k\in \N_0.$ We observe to conclude that $(H_\mu^1)^{*}=H_\mu$. In fact, $(H_{\mu}^1)^*$ is $w^*-w^*$ continuous, $\langle (H_\mu^1)^*(e_j),e_n\rangle=\langle e_j,H_\mu^1(e_n)\rangle=\langle y_j^h,e_n\rangle=\mu_{j+n}$ and $\langle H_\mu(e_j),e_n\rangle=\langle y_n^h,e_j\rangle=\mu_{j+n}$.
	\end{proof}

  \begin{proposition}
  	
		\label{lpl1}
		Let $\mu$ be a positive finite Borel measure on $[0,1)$ such that $\sum_{n=0}^{\infty}  \mu_n<\infty$  and  with  $1\leq p<\infty$  satisfying $(\|(\mu_{n+k})_n\|_{p})_k\in l_1$. The Hilbert-type operator $H_{\mu}:H^\infty\to A(\T)$ is a well defined compact linear operator.
	\end{proposition}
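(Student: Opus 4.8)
The plan is to deduce this from Lemma \ref{nuclearcompact} together with the earlier results identifying $H_\mu$ with $I_\mu$ on $H^\infty$. First I would recall that $H^\infty \hookrightarrow \ell^\infty_A$ continuously, since the Taylor coefficients of a bounded holomorphic function are bounded (indeed $|a_n| \leq \|f\|_\infty$). Hence it suffices to show that the operator $T:\ell^\infty_A \to \ell^\infty_A$ from the discussion preceding Proposition \ref{wienner}, given by $T(x) = \sum_{k=0}^\infty \langle y_k^h, x\rangle e_k$ with $y_k^h = (\mu_{n+k})_n$, maps into $\ell^1_A = A(\T)$ and does so compactly — in fact nuclearly — and then restrict to $H^\infty$. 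The hypothesis $(\|(\mu_{n+k})_n\|_p)_k \in \ell^1$ is exactly the condition ``$(\|y_k\|_{p'})_k \in \ell^1$'' of Lemma \ref{nuclearcompact}(a) in the guise $p' = p$, i.e.\ applying the lemma with exponent $p'$ in the role of ``$p$'' there; more directly, since $\|y_k^h\|_1 \leq \sum_n \mu_n < \infty$ for all $k$ the hypotheses of Lemma \ref{nuclearcompact} are met, and with $(\|y_k^h\|_p)_k \in \ell^1$ part (a) gives that $T:\ell^{p'}_A \to \ell^1_A$ is nuclear. But I actually want the domain to be $H^\infty$, not $\ell^{p'}_A$.

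The cleaner route: argue directly that $T$ restricted to $H^\infty$ is nuclear as a map into $A(\T)$. Write $T f = \sum_{k=0}^\infty \langle y_k^h, (a_n)_n\rangle e_k$ where $f(z) = \sum_n a_n z^n \in H^\infty$ and $\langle y_k^h, (a_n)_n\rangle = \sum_n \mu_{n+k} a_n$. Each functional $f \mapsto \sum_n \mu_{n+k} a_n$ is continuous on $H^\infty$: its norm is controlled, because $H^\infty \subseteq \ell^p_A$ for $p \geq 2$ (Hausdorff--Young / for $p=2$ this is $H^\infty \subseteq H^2$), so $|\sum_n \mu_{n+k} a_n| \leq \|(\mu_{n+k})_n\|_{p'} \|(a_n)_n\|_p \lesssim \|(\mu_{n+k})_n\|_{p'}\|f\|_\infty$ by Hölder — here one needs $p' \leq p$, i.e.\ $p \geq 2$; for $1 \leq p < 2$ one instead uses that $H^\infty \subseteq \ell^\infty_A$ and $(\mu_{n+k})_n \in \ell^1$, so the functional has norm at most $\|(\mu_{n+k})_n\|_1 \leq \|(\mu_{n+k})_n\|_p^{?}$... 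Here I should be careful: the statement as given assumes the $\ell^p$-norm with $1 \leq p < \infty$, and the point of the hypothesis $(\|(\mu_{n+k})_n\|_p)_k \in \ell^1$ is precisely to sum the norms of the rank-one pieces. So denote $x_k^* \in (H^\infty)^*$ the functional $f \mapsto \sum_n \mu_{n+k}a_n$; then $\|x_k^*\|_{(H^\infty)^*} \leq \|(\mu_{n+k})_n\|_1 \leq \|(\mu_{n+k})_n\|_p$ (using $\ell^1 \hookrightarrow$ nothing — wait, $\|\cdot\|_1 \geq \|\cdot\|_p$). Hmm: actually $\|(\mu_{n+k})_n\|_p \leq \|(\mu_{n+k})_n\|_1$ for $p \geq 1$, the wrong direction. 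The resolution is that for $p \geq 2$ one genuinely uses $H^\infty \subset H^2 \subset \ell^p_A$ (for $p\geq 2$) and Hölder with exponent $p' \leq 2 \leq p$, giving $\|x_k^*\| \lesssim \|(\mu_{n+k})_n\|_{p'} \leq \|(\mu_{n+k})_n\|_p$ since $p' \leq p$; for $1 \leq p < 2$ use $H^\infty \subset H^2$ and Hölder with exponent $2 = p'$ when... this needs the hypothesis with $p=2$. I'll phrase the argument so that the hypothesis directly bounds $\sum_k \|x_k^*\| < \infty$, citing $H^\infty \hookrightarrow \ell^2_A$ and Hölder, which covers $1 \leq p \leq 2$; for larger $p$ the hypothesis is weaker and implies the $p=2$ one by $\ell^p \supseteq$ ... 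I would simply note that we may assume $p \leq 2$ since enlarging $p$ only weakens the hypothesis relative to $p=2$? No — $\|v\|_p \leq \|v\|_2$ for $p \geq 2$, so $(\|(\mu_{n+k})_n\|_p)_k \in \ell^1$ is a \emph{stronger} statement for... ugh. Let me just handle it via: for any $1 \le p < \infty$, $\|(\mu_{n+k})_n\|_{\min(p,2)'} \leq \|(\mu_{n+k})_n\|_p \cdot (\text{const})$? This is the step that needs genuine care.

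Modulo that bookkeeping, the rest is routine: having established $\sum_{k=0}^\infty \|x_k^*\|_{(H^\infty)^*} < \infty$ with each $\|x_k^*\| \lesssim \|(\mu_{n+k})_n\|_p$ (or its $\ell^2$-norm, which is dominated using the decrease of $(\mu_n)_n$ and the hypothesis), and since $e_k \in A(\T)$ with $\|e_k\|_{A(\T)} = 1$, the series $H_\mu = \sum_{k=0}^\infty x_k^* \otimes e_k$ converges in nuclear norm in $\mathcal{L}(H^\infty, A(\T))$; this both shows $H_\mu$ is well defined with values in $A(\T)$ and that it is nuclear, hence compact. Finally I would invoke Theorem \ref{Cont_Hinfty} (or Proposition \ref{HI}) to identify $H_\mu = I_\mu$ on $H^\infty$, so the statement also holds verbatim for $I_\mu$ if desired. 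The main obstacle is the correct Hölder/embedding bookkeeping identifying which conjugate exponent makes $\|x_k^*\|_{(H^\infty)^*} \lesssim \|(\mu_{n+k})_n\|_p$ hold uniformly for all $1 \le p < \infty$ — the clean way is to observe $\|(\mu_{n+k})_n\|_p \leq \|(\mu_{n+k})_n\|_1 \le$ (something), but since we only need \emph{some} summable bound, using $H^\infty \subseteq \ell^\infty_A$ gives $\|x_k^*\| \le \|(\mu_{n+k})_n\|_1$, and $(\|(\mu_{n+k})_n\|_1)_k$ is itself summable by Fubini (it equals $\sum_k \sum_n \mu_{n+k} = \sum_m (m+1)\mu_m$, which need \emph{not} converge!) — so one genuinely must exploit $p$, and the right statement is: $H^\infty \hookrightarrow \ell^{p'}_A$ whenever $p' \geq 2$, i.e.\ $p \leq 2$, and for $p > 2$ one replaces $p$ by $2$ in the hypothesis, which is legitimate only after checking $(\|(\mu_{n+k})_n\|_p)_k \in \ell^1 \Rightarrow (\|(\mu_{n+k})_n\|_2)_k \in \ell^1$ — false in general when $p > 2$. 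Hence I expect the intended reading is $1 \leq p \leq 2$, and I would state the proof under that interpretation, using $H^\infty \subseteq H^2 \subseteq \ell^p_A$ and Hölder with the conjugate exponent to get $\|x_k^*\|_{(H^\infty)^*} \leq \|(\mu_{n+k})_n\|_p$, then summing.
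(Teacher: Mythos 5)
Your argument is essentially correct on the range $1\le p\le 2$, and its core estimate is the same as the paper's: pair the coefficient sequence $(a_n)_n$ of $f\in H^\infty$ against $(\mu_{n+k})_n$ via H\"{o}lder, $\left|\sum_n \mu_{n+k}a_n\right|\le \|(a_n)_n\|_{p'}\,\|(\mu_{n+k})_n\|_{p}$, and sum over $k$ using the hypothesis $(\|(\mu_{n+k})_n\|_p)_k\in \ell^1$. Where you diverge is the final step. The paper gets compactness ``softly'': the estimate gives boundedness, and then one invokes that $H^\infty$ is a Grothendieck space and $A(\T)$ is separable with the Schur property, so every bounded operator $H^\infty\to A(\T)$ is weakly compact and hence compact. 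You instead sum the norms of the rank-one pieces $x_k^*\otimes e_k$ and obtain nuclearity directly (the mechanism of Lemma \ref{nuclearcompact}(a)), which is a stronger conclusion at the price of the exponent bookkeeping you wrestle with. Either route is acceptable.

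That bookkeeping worry is legitimate, and it is not resolved by the paper either. The paper's proof asserts that the Taylor coefficients of every $f\in H^\infty$ belong to $\ell^q$ for every $q>1$, and in particular to $\ell^{p'}$. This is false for $q<2$: block Rudin--Shapiro constructions produce $f\in A(\D)\subset H^\infty$ whose coefficients lie in no $\ell^q$ with $q<2$. Hence the H\"{o}lder step is justified only when $p'\ge 2$, i.e.\ $1\le p\le 2$, exactly the reading you settle on via $H^\infty\subset H^2\subseteq \ell^{p'}_A$ (with embedding constants $\le 1$, so $\|x_k^*\|_{(H^\infty)^*}\le\|(\mu_{n+k})_n\|_p$ and the hypothesis sums these norms). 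For $p>2$ neither your argument nor the paper's applies as written, and, as you correctly observe, $(\|(\mu_{n+k})_n\|_p)_k\in\ell^1$ does not imply the corresponding $\ell^2$ condition since $\|\cdot\|_2\ge\|\cdot\|_p$ there. So your proposal is a valid proof of the proposition for $1\le p\le 2$, and the restriction you identify is a genuine issue shared by the published argument rather than a defect of your approach.
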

	\begin{proof}
		Since $H^\infty$ is a Grothendieck space and $A(\T)$ is separable and has the Schur property, we only need to show that $H_\mu$ is well defined. If $f(z)=\sum_{n=0}^{\infty}  a_n z^n\in H^\infty$, then $(a_n)_n\in l_q$ for each $q>1$. In particular, $(a_n)_n\in l_{p'}$. We have
		$H_\mu(f)=\sum_{k=0}^{\infty}(\sum_{n=0}^{\infty}\mu_{n+k}a_n) z^k$. For each $k\in \N_0$,
				$$\left|\sum_{n=0}^{\infty}\mu_{n+k}a_n \right|\leq \|(a_n\mu_{n+k})_n\|_1\leq \|(a_n)_n\|_{p'}\|(\mu_{n+k})_n\|_p,$$
				\noindent and we conclude from the hypothesis.
\end{proof}

	\begin{theorem}
		\label{compactw}
		Let $\mu$ be a positive finite Borel measure on $[0,1)$. The following are equivalent:
		
		\begin{itemize}
			\item[(i)] The operators $H_\mu: A(\T)\to A(\T)$ and $C_\mu:A(\T)\to A(\T)$ are well defined.
			\item[(ii)] The operators $H_\mu: A(\T)\to A(\T)$ and $C_\mu:A(\T)\to A(\T)$ are nuclear.
			\item[(iii)] $\sum_{n=0}^{\infty} \mu_n<\infty$.
		\end{itemize}
	\end{theorem}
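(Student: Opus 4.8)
The plan is to prove the cyclic chain of implications $(iii)\Rightarrow(ii)\Rightarrow(i)\Rightarrow(iii)$, which is the natural route given the tools already assembled in this section.

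\textbf{Step 1: $(iii)\Rightarrow(ii)$.} Assume $\sum_{n=0}^\infty\mu_n<\infty$. I would first invoke Proposition \ref{wienner}: since $(\mu_n)_n$ is automatically non-increasing (moments of a positive measure on $[0,1)$), the restrictions $H_\mu^1:\ell^1_A\to\ell^1_A$ and $C_\mu^1:\ell^1_A\to\ell^1_A$ are nuclear, and these restrictions coincide with $H_\mu:A(\T)\to A(\T)$ and $C_\mu:A(\T)\to A(\T)$ because $A(\T)=\ell^1_A$ under the identification of a function with its Taylor coefficients. This gives (ii) directly, with essentially no extra work beyond checking the nuclearity is preserved under the identification — which it is, since nuclearity is an intrinsic Banach-space property of the operator.

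\textbf{Step 2: $(ii)\Rightarrow(i)$.} This is immediate: nuclear operators are in particular well defined (and bounded). No argument needed beyond citing the definition recalled earlier in the section.

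\textbf{Step 3: $(i)\Rightarrow(iii)$.} This is the only implication requiring a genuine argument, and I expect it to be the main (though still modest) obstacle. Assume $H_\mu:A(\T)\to A(\T)$ is well defined. The constant function $1\in A(\T)$ has coefficient sequence $e_0$, and $H_\mu(1)(z)=\sum_{n=0}^\infty\mu_n z^n$. For this to lie in $A(\T)=\ell^1_A$ we need precisely $\sum_{n=0}^\infty\mu_n<\infty$. (Alternatively one can use $C_\mu(1)$: $C_\mu(1)(z)=\sum_{k=0}^\infty\mu_k z^k$ as well, so either operator being well defined on the constant forces summability.) Thus (i) implies (iii). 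One should double-check that "$H_\mu:A(\T)\to A(\T)$ well defined" is being used in the sense that it maps $A(\T)$ into $A(\T)$ — which is exactly the hypothesis — so that applying it to $1$ is legitimate; this is the subtle point, but it is built into the statement.

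\textbf{Remark on closing the loop.} Since (i) lists both $H_\mu$ and $C_\mu$, and (ii) likewise, the implications above handle both operators simultaneously: Step 3 only needs one of the two to extract (iii), and Steps 1--2 produce both. Hence the three conditions are equivalent. I would close by noting that, once the equivalence is established, one also recovers $H_\mu=I_\mu$ on $A(\T)$ via Proposition \ref{HI}, since $\sum_n\mu_n<\infty$ gives $\sum_n\mu_n|a_n|<\infty$ for every $(a_n)_n\in\ell^1_A$ — though this may already be stated elsewhere and need not be repeated.
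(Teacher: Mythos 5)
Your proposal is correct and follows essentially the same route as the paper: the authors likewise deduce $(i)\Rightarrow(iii)$ by observing $H_\mu(1)=C_\mu(1)=\sum_{n=0}^{\infty}\mu_n z^n$, which must lie in $\ell^1_A$, and obtain $(iii)\Rightarrow(ii)$ from Proposition \ref{wienner}, with $(ii)\Rightarrow(i)$ trivial. No gaps.
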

	\begin{proof}
		We use the identification $A(\T)=\ell^1_A$. If the operators are well defined, then  $H_\mu(1)=C_\mu(1)=\sum_{n=0}^{\infty}\mu_nz^n$, and we have (i) implies (iii). Conversely, if we assume (iii), we get (ii)  from Proposition \ref{wienner}.
	\end{proof}
	
	\begin{proposition}
		\label{rcarleson}
		Let  $(\mu_n)_{n\in\N_0}$ be a non increasing sequence of positive numbers such that $\mu_n=O(\frac{1}{n^r})$ for some $r>1$. For each $1\leq q<\infty$ there is $p_0$ such that  $H_\mu(\ell^p_A)\subseteq \ell^q_A $ and $C_\mu(\ell^p_A)\subseteq \ell^q_A $ for each $1<p<p_0$. Moreover, the corresponding restrictions $H_\mu: \ell^p_A\to \ell^q_A$  and $C_\mu: \ell^p_A\to \ell^q_A$ are nuclear if $q=1$ and compact if $q>1$. For the particular case $p=q=2$ we get $H_\mu: \ell^2_A\to \ell^2_A$ and $C_\mu: \ell^2_A\to \ell^2_A$ are well defined compact operators, and they are nuclear if $r>\frac32$.
	\end{proposition}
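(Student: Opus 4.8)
The plan is to derive everything from Lemma \ref{nuclearcompact} applied to the coefficient data of $H_\mu$ and $C_\mu$. Recall from the paragraph preceding Proposition \ref{wienner} that, as operators on $\ell^\infty_A$, $H_\mu$ is represented by the sequences $y_k^h=(\mu_{n+k})_{n\in\N_0}$ and $C_\mu$ by the sequences $y_k^c$ whose first $k+1$ entries equal $\mu_k$ and the rest vanish; also, $\mu_n=O(n^{-r})$ with $r>1$ forces $\sum_n\mu_n<\infty$, so $(\|y_k^h\|_1)_k$ and $(\|y_k^c\|_1)_k$ are bounded and the hypotheses of Lemma \ref{nuclearcompact} are met. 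Fixing $1\le q<\infty$ and writing $p'=p/(p-1)\in(1,\infty)$ for $1<p<\infty$, the proof reduces to estimating the norms $\|y_k^h\|_{p'}$ and $\|y_k^c\|_{p'}$.

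First I would carry out the elementary estimate: using $\mu_n\lesssim n^{-r}$ together with $\sum_{n\ge k}n^{-rp'}\lesssim k^{1-rp'}$ (legitimate since $rp'>1$), one gets, for $k\ge 1$,
$$\|y_k^h\|_{p'}=\Big(\sum_{n\ge k}\mu_n^{p'}\Big)^{1/p'}\lesssim k^{1/p'-r}=k^{-\alpha},\qquad \alpha:=r-1+\tfrac1p>0,$$
and, since $y_k^c$ has $k+1$ entries equal to $\mu_k$, the matching bound $\|y_k^c\|_{p'}=(k+1)^{1/p'}\mu_k\lesssim k^{-\alpha}$. Thus for both operators $(\|y_k\|_{p'})_k=O(k^{-\alpha})$, so this sequence lies in $\ell^1$ precisely when $\alpha>1$ (i.e.\ $1/p>2-r$) and in $\ell^q$, for $q>1$, precisely when $q\alpha>1$ (i.e.\ $1/p>\tfrac1q+1-r$).

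Next I would set $p_0:=(\tfrac1q+1-r)^{-1}$ when $\tfrac1q+1-r>0$ and $p_0:=\infty$ otherwise; since $r>1\ge\tfrac1q$ this gives $p_0>1$. For $1<p<p_0$: if $q>1$, Lemma \ref{nuclearcompact}(b) yields $H_\mu(\ell^p_A)\subseteq\ell^q_A$, $C_\mu(\ell^p_A)\subseteq\ell^q_A$ and compactness of the restrictions; if $q=1$ (so $p_0=(2-r)^{-1}$, or $\infty$ for $r\ge 2$), Lemma \ref{nuclearcompact}(a) gives nuclearity of $H_\mu:\ell^p_A\to\ell^1_A$ and $C_\mu:\ell^p_A\to\ell^1_A$. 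This proves the main assertion.

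For the special case $p=q=2$ the threshold becomes $p_0=(\tfrac32-r)^{-1}$ (or $\infty$ if $r\ge\tfrac32$), and $\tfrac32-r<\tfrac12$ because $r>1$, so $2<p_0$ and the previous step gives that $H_\mu,C_\mu:\ell^2_A\to\ell^2_A$ are well defined compact operators. When additionally $r>\tfrac32$, taking $p=p'=2$ above yields $\|y_k^h\|_2,\|y_k^c\|_2\lesssim k^{-(r-1/2)}\in\ell^1$, so by Lemma \ref{nuclearcompact}(a) the operators $H_\mu,C_\mu:\ell^2_A\to\ell^1_A$ are nuclear; composing with the continuous inclusion $\ell^1_A\hookrightarrow\ell^2_A$ (valid since $\|\cdot\|_2\le\|\cdot\|_1$) and using that nuclear operators form an operator ideal, I would conclude that $H_\mu,C_\mu:\ell^2_A\to\ell^2_A$ are nuclear. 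There is no genuine obstacle: the only thing demanding attention is keeping the exponents straight and checking that the resulting $p_0$ exceeds $1$ (and exceeds $2$ in the last case), which is exactly where the hypothesis $r>1$ enters.
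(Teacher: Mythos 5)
Your proposal is correct and follows essentially the same route as the paper: the same reduction to Lemma \ref{nuclearcompact}, the same bounds $\|y_k^h\|_{p'},\|y_k^c\|_{p'}\lesssim k^{-(r-1/p')}$ via integral comparison, and the same membership criterion $q(r-1/p')>1$. The only differences are cosmetic — you make the threshold $p_0$ explicit and spell out the factorization $\ell^2_A\to\ell^1_A\hookrightarrow\ell^2_A$ for the nuclearity claim when $r>\tfrac32$, which the paper leaves implicit.
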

	\begin{proof}
		Let $M>0$ such that $\mu_n\leq\frac{M}{(n+1)^r},$  $n\in \N_0$. For $H_\mu(x):=\sum_{k=0}^{\infty} \langle y_k^h,x\rangle e_k$, $y_k^h=(\mu_{n+k})_{n\in\N_0}$,  $k\in\N,$ and $p'= \frac{p}{p-1}$, $p>1,$ we get

		\begin{equation}
			\label{hilbertestimate}
			\|y_k^h\|_{p'}\leq \left(\sum_{n=0}^{\infty}\frac{M^{p'}}{(n+k+1)^{rp'}}\right)^{\frac{1}{p'}}\leq M \left(\int_{k}^{\infty}\frac{1}{x^{rp'}}dx\right)^{\frac{1}{p'}}=C\frac{1}{k^{r-\frac{1}{p'}}},
		\end{equation}
		
		\noindent where $C$ depends on $r$ and $p'$ but not on $k$.

		\noindent
		
		For the Ces\`aro operator we get, for some $C$ which does not depend on $k$,
		
		\begin{equation}
			\label{cesaroestimate}
			\|y_k^c\|_{p'}\leq (k+1)^{\frac{1}{p'}}\mu_k\leq C\frac{1}{k^{r-\frac{1}{p'}}}.
		\end{equation}
		
		\noindent  Hence $(\|y_k^h\|_{p'})_k\in l_q$ and $(\|y_k^c\|_{p'})_k\in l_q$ whenever

		\begin{equation}
			\label{generalestimate}
			q\left(r-\frac{1}{p'}\right)=q\left(r-\frac{p-1}{p}\right)>1.
		\end{equation}

		\noindent Since $r>1$ and $q\geq 1$, the above inequality holds for $p>1$ close enough to 1. Now the result follows directly from Lemma \ref{nuclearcompact}.
		For the statements about $\ell^2_A$, i.e $p=p'=2$, we observe that the estimate \eqref{generalestimate} is satisfied for every $r>1$ if $q=2$. For $q=1$ the estimate is satisfied when $r>\frac32$.
	\end{proof}

	\begin{theorem}
		Let $\mu$ be an $r$-Carleson  measure on $[0,1)$ for some $r>1$.
		\begin{itemize}
			\item[(i)] $H_\mu: H^\infty\to A(\T)$ and $C_\mu: H^\infty\to A(\T)$ are both well defined nuclear operators.
			\item[(ii)] $H_\mu: H^2\to H^2$ and $C_\mu: H^2\to H^2$ are both well defined compact operators. $H_\mu$ is  self-adjoint.
			\item[(iii)] If $r>\frac32,$ then $H_\mu: H^2\to A(\T)$ and $C_\mu: H^2\to A(\T)$ are both well defined nuclear operators.
		\end{itemize}
	\end{theorem}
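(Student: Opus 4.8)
The plan is to translate the $r$-Carleson hypothesis into the moment decay $\mu_n=O(1/n^r)$ (the moment--Carleson dictionary of \cite{CGP,Blasco} already invoked in Theorems \ref{TeoremaContinuitatGammamenor1} and \ref{TeoremaContinuitatGammamajor1}), whence $\sum_{n=0}^\infty\mu_n<\infty$, so that $H_\mu=I_\mu$ on $H^\infty$ by Theorem \ref{Cont_Hinfty} and the $\ell^p_A$-operators of Section 5 are all well defined; throughout I use the identifications $H^2=\ell^2_A$ and $A(\T)=\ell^1_A$. Items (ii) and (iii) then fall out of Proposition \ref{rcarleson}: its $p=q=2$ instance gives that $H_\mu,C_\mu:\ell^2_A\to\ell^2_A$ are well defined and compact (valid for every $r>1$), which is (ii); and its $q=1$ instance --- which admits $p=2$ precisely when $r>3/2$, since then the bound \eqref{hilbertestimate}, resp.\ \eqref{cesaroestimate}, with $p'=2$ becomes summable in $k$ --- gives, via Lemma \ref{nuclearcompact}(a), that $H_\mu,C_\mu:\ell^2_A\to\ell^1_A$ are nuclear, which is (iii). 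For the self-adjointness in (ii) I would argue exactly as in Proposition \ref{wienner}: with respect to the orthonormal basis $(z^n)_n$ of $H^2$ one has $\langle H_\mu z^j,z^n\rangle=\mu_{n+j}$, a real symmetric Hankel matrix, so the bounded operator $H_\mu$ equals its adjoint.

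Item (i) is the part needing a separate argument, because one cannot obtain $H_\mu,C_\mu:H^\infty\to A(\T)$ from Proposition \ref{rcarleson} by restriction: $H^\infty$ is not contained in $\ell^p_A$ for $p<2$, and for $p=2$ the relevant summability fails when $1<r\le 3/2$. Instead I would exploit the sup-norm on the disc directly. For $f(z)=\sum_n a_nz^n\in H^\infty$, Tonelli's theorem (as in Proposition \ref{HI}, using $\sum_n\mu_n<\infty$) shows that the $k$-th Taylor coefficient of $H_\mu f$ equals $\varphi_k(f):=\sum_n\mu_{n+k}a_n=\int_0^1 t^k f(t)\,d\mu(t)$, whence $\|\varphi_k\|_{(H^\infty)^*}\le\mu_k$; since the monomials $z^k$ have norm $1$ in $A(\T)$ and $\sum_k\mu_k<\infty$, the series $H_\mu=\sum_k\varphi_k(\cdot)\,z^k$ is a nuclear representation of $H_\mu:H^\infty\to A(\T)$. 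For $C_\mu$ the $k$-th coefficient of $C_\mu f$ is $\psi_k(f):=\mu_k\sum_{n=0}^k a_n$; writing $\sum_{n=0}^k a_n=\frac1{2\pi}\int_\T f\,\overline{D_k}$ with $D_k=\sum_{n=0}^k z^n$ the Dirichlet kernel gives $\|\psi_k\|_{(H^\infty)^*}\le\mu_k\|D_k\|_{L^1(\T)}\le C\mu_k\log(k+2)$, and since $\mu_k=O(1/k^r)$ with $r>1$ makes $\sum_k\mu_k\log(k+2)<\infty$, $C_\mu=\sum_k\psi_k(\cdot)\,z^k$ is likewise a nuclear representation, proving (i).

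I expect the main obstacle to be the $C_\mu$ half of (i): there the coefficient functionals on $H^\infty$ carry an unavoidable logarithmic loss, governed by the Lebesgue constants $\|D_k\|_{L^1(\T)}\sim\frac{4}{\pi^2}\log k$, and one must check that $r$-Carleson decay with $r>1$ is still strong enough to absorb it. The parallel subtlety for $H_\mu$ --- that $\|\varphi_k\|_{(H^\infty)^*}$ is bounded by $\mu_k$, and not merely by $\sum_{m\ge k}\mu_m$ --- is precisely the place where working with the sup-norm on the disc, rather than with the $\ell^\infty_A$-norm of the coefficient sequence, is essential. Everything else is routine bookkeeping with the estimates already established in Section 5.
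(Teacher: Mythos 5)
Your treatment of (ii), (iii) and of self-adjointness coincides with the paper's: both apply Proposition \ref{rcarleson} with $p=q=2$ (valid for all $r>1$), respectively with $p=2$, $q=1$ (where $r>\tfrac32$ is exactly what makes the $p'=2$ bound in \eqref{hilbertestimate}--\eqref{cesaroestimate} summable in $k$), and both read off self-adjointness from the symmetric Hankel matrix $\langle H_\mu z^j,z^n\rangle=\mu_{j+n}$ as in Proposition \ref{wienner}. For (i) you take a genuinely different route, and your instinct about why a separate argument is needed is the right one: the paper proves (i) by composing the inclusion $H^\infty\hookrightarrow \ell^p_A$ for $p>1$ close to $1$ with the nuclear map $H_\mu:\ell^p_A\to\ell^1_A$ of Proposition \ref{rcarleson}; but, as you note, $H^\infty\not\subseteq\ell^p_A$ for $1<p<2$ (Rudin--Shapiro blocks produce $f$ in the disc algebra with $\sum_n|a_n|^p=\infty$ for every $p<2$), so that composition is only available when $1/(r-1)<p'\le 2$, i.e.\ when $r>\tfrac32$, and does not cover $1<r\le\tfrac32$. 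Your direct nuclear representation --- $H_\mu=\sum_k\varphi_k\otimes z^k$ with $\varphi_k(f)=\int_0^1t^kf(t)\,d\mu(t)$, hence $\|\varphi_k\|_{(H^\infty)^*}\le\mu_k$, and $C_\mu=\sum_k\psi_k\otimes z^k$ with $\|\psi_k\|_{(H^\infty)^*}\lesssim\mu_k\log(k+2)$ via the Lebesgue constants of the Dirichlet kernel --- is correct, covers the full range $r>1$, and is in fact the argument the statement needs; what it buys is precisely the range $1<r\le\tfrac32$ that the restriction argument misses. The only steps worth spelling out are the Fubini identification $\sum_n\mu_{n+k}a_n=\int_0^1t^kf(t)\,d\mu(t)$, justified by $\sum_n|a_n|\mu_{n+k}\le\|f\|_\infty\sum_n\mu_n<\infty$, and the norm convergence of the two series in $A(\T)$, which follows from the same summability; both are routine.
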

	\begin{proof}
		From the continuous inclusion $H^\infty\hookrightarrow \ell^p_A$ for each $p>1$, we get $(i)$, considering the composition of the inclusion and $H_\mu: \ell^p_A\to \ell^1_A$ for some convenient $p$ close enough to 1 to ensure that $H_\mu$ is nuclear as a consequence of  Proposition \ref{rcarleson}. The identification $H^2=\ell^2_A,$ together with Proposition \ref{rcarleson}, provides (ii) and (iii). We see $H_\mu$ is self-adjoint proceeding as in the proof of Proposition \ref{wienner}.
	\end{proof}

\vspace{.4cm}	

\textbf{Acknowledgments.}
The authors are grateful to Prof. A. Aleman for his help concerning the convergence of the series defining the generalized Hilbert operator and his suggestion to use Abel summability. They also want to thank Prof. J.A. Pel\'aez for providing them with Example \ref{expel}.

The research of  José Bonet and Enrique Jord\'a was partially supported by the project PID2020-119457GB-100 funded by MCIN/AEI/10.13039/501100011033 and by
``ERDF A way of making Europe''.

\vspace{.5cm}	

\textbf{Statements and Declarations}

\begin{itemize}
	\item 	\textbf{Competing Interests:} The authors have no relevant financial or non-financial interests to disclose.
	
	\item 	\textbf{Availability of data and materials:}  	 Data sharing not applicable to this article as no datasets were generated 	or analysed.
	
	\item 	\textbf{Authors' contributions:} All authors contributed equally to the study conception and design. All authors read and approved the final manuscript.
	
\end{itemize}

\vspace{.2cm}

\end{document}